\documentclass[a4paper,leqno,12pt]{amsart}
\usepackage[utf8]{inputenc}

\usepackage[top=25mm, bottom=25mm, left=20mm, right=20mm]{geometry}

\usepackage{amsmath, amstext, amssymb, enumitem, mathtools}

\usepackage{booktabs, array}

\usepackage[hidelinks]{hyperref}
\hypersetup{
    colorlinks = true,
    urlcolor   = blue,
    linkcolor  = blue,
    citecolor  = red
}

\newcommand{\RR}{\mathbb{R}}

\newcommand{\me}{\mathrm{e}}
\newcommand{\dint}{\,\mathrm{d}}

\DeclarePairedDelimiter{\norm}{\lVert}{\rVert}
\DeclarePairedDelimiter{\abs}{\lvert}{\rvert}
\DeclarePairedDelimiter{\inner}{\langle}{\rangle}

\newcommand{\TT}{\mathbb{T}}
\newcommand{\ZZ}{\mathbb{Z}}
\newcommand{\Hd}{\dot H^1}
\newcommand{\Hdd}{\dot H^2}
\newcommand{\LL}{\mathcal{L}}
\newcommand{\Knet}{K^{\mathrm{net}}}
\newcommand{\Fheat}{\mathcal{F}_{\mathrm{heat}}}
\newcommand{\Fmix}{\mathcal{F}_{\mathrm{mix}}}
\newcommand{\Kreq}{K_{\mathrm{req}}}
\newcommand{\Nsinc}{N_{\mathrm{sinc}}}
\newcommand{\Acl}{\mathcal{A}_\theta}
\newcommand{\AB}{\mathcal{A}_{\mathcal B}}
\newcommand{\Mcal}{\mathcal{M}}
\newcommand{\Ecal}{\mathcal{E}}
\newcommand{\HR}{\textup{(H$_R$)}}
\newcommand{\one}{\mathbf{1}}
\newcommand{\BE}{\mathrm{BE}}

\numberwithin{equation}{section}

\theoremstyle{plain}
\newtheorem{theorem}[equation]{Theorem}
\newtheorem{proposition}[equation]{Proposition}
\newtheorem{lemma}[equation]{Lemma}
\newtheorem{corollary}[equation]{Corollary}
\newtheorem*{thmA}{Theorem A}

\theoremstyle{definition}
\newtheorem{definition}[equation]{Definition}
\newtheorem{assumption}[equation]{Assumption}
\newtheorem{remark}[equation]{Remark}

\begin{document}

\title[Heat-semigroup emulation of the Darcy operator]{Emulating the Darcy Solution Operator\\ by Heat-Semigroup Functional Calculus\\ and Pointwise Products}

\author{Dawid Hanrahan}
\address{
Dawid Hanrahan (\textnormal{dawid.hanrahan@pwr.edu.pl})\\
Wroc{\l}aw University of Science and Technology,
50-370 Wroc{\l}aw, Poland
}

\begin{abstract}
We approximate the solution operator $(a,f)\mapsto u$ of $-\operatorname{div}(a\nabla u)=f$ by operator networks whose only primitives are heat-semigroup functions of a fixed reference operator and pointwise products.
The main result is an emulation theorem on strongly local Dirichlet spaces with a spectral gap and a Bakry--\'Emery bound $\BE(-K,\infty)$: an explicit scheme reaches $\Hd$-error $\varepsilon$ uniformly over coefficients with bounded carr\'e du champ and bounded generator image and over $L^2$ data, with parameters depending only on the coefficient bounds, the accuracy and $K$; on the torus this gives a $C^2$-type coefficient class with constants independent of the dimension.
The torus version refines this to Wiener classes of fractional regularity $A^{1+\theta}$, with an explicit rate attained by the scheme's product-defect block.
The scheme unrolls the Laplacian-preconditioned Richardson iteration, controls each inexact step in the operator norm of $\Hd$ without Galerkin truncation, and synthesises the coupling $\nabla a\cdot\nabla u$ from the product defect of one heat layer.
All parameters are explicit; the results are approximation theorems for continuum operators.

\smallskip
\noindent\textbf{2020 Mathematics Subject Classification:}
Primary 65N12, 47D07; Secondary 47A60, 65F08, 65N35, 68T07.

\smallskip
\noindent\textbf{Key words:}
heat semigroup, functional calculus, carr\'e du champ, Bakry--\'Emery condition, Dirichlet forms, Laplacian preconditioning, Richardson iteration, operator approximation, Darcy problem.
\end{abstract}

\maketitle

\section{Introduction}
\label{sec:intro}

\subsection{Problem and architecture}
\enlargethispage{\baselineskip}
\label{subsec:problem}
Let $\TT^d=(\RR/2\pi\ZZ)^d$ with the normalised Lebesgue measure, let $a$ be a measurable coefficient with $0<a_{\min}\le a\le a_{\max}$, and let $f\in L^2(\TT^d)$ have mean zero.
The equation $-\operatorname{div}(a\nabla u)=f$ has a unique mean-zero weak solution $u=G(a,f)\in\Hd(\TT^d)$, where $\Hd$ is the space of mean-zero functions with $\norm{u}_1:=\norm{\nabla u}_{L^2}<\infty$.
The map $G\colon(a,f)\mapsto u$, the Darcy solution operator, is a standard test case in the approximation theory of operators between function spaces \cite{KLM21,Kov23,MS23,MS24}.
We ask which \emph{primitives} suffice to represent $G$ to accuracy $\varepsilon$ in $\Hd$, uniformly over a class of coefficients and over the unit ball of $L^2$ data, with explicit and quantitative control.

The nonlocal primitives considered here are functions $\psi(L_0)$ of a single \emph{fixed} reference operator, here $L_0=-\Delta$: finite sums of heat-semigroup evaluations $P_t=\me^{-tL_0}$ and, in a variant, one resolvent $J_s=(I+sL_0)^{-1}$.
The local primitives are pointwise products of two functions (``channels'') and pointwise affine maps.
This primitive set is natural in diffusion settings---manifolds, weighted manifolds and, more generally, the strongly local Dirichlet spaces of Section~\ref{sec:dirichlet}---where diffusion with respect to a fixed reference operator is available while neither Fourier indices nor coordinate gradients are part of the architecture; closely related linear layers appear as manifold filters \cite{WRR24} and, with learned diffusion times, in DiffusionNet \cite{SACO22}.
The same primitives exist on graphs, but the results of this paper do not cover graph Laplacians: the Leibniz rule for the carr\'e du champ, which the argument uses, fails there (Section~\ref{sec:conclusion}).
Lanthaler, Li and Stuart \cite{LLS25} show that nonlocal averaging combined with a pointwise nonlinearity already yields universal approximation in a qualitative sense; the question here is quantitative and concerns one operator, with functional calculus of $L_0$ and products as the only primitives.

Pointwise multiplication plays two roles.
Every function of $L_0$ commutes with translations, whereas $G(a,\cdot)$ does not, so the coefficient can enter only through pointwise operations.
Moreover, the first-order coupling $\nabla a\cdot\nabla u$, which appears when the variable-coefficient operator is compared with $L_0$, cannot be produced by any function of $L_0$.
We obtain it from the \emph{product defect} of one heat layer,
\begin{equation}
    \label{eq:intro-gamma}
    \Gamma_t(b,u)=\frac1{2t}\bigl[P_t(bu)-P_tb\,P_tu\bigr],
\end{equation}
which converges to the carr\'e du champ $\Gamma(b,u)=\nabla b\cdot\nabla u$ as $t\to0$; the underlying identity is the semigroup interpolation formula (see, e.g., \cite{BGL14}).

\begin{definition}[Operator networks with functional-calculus layers]
\label{def:class}
Let $\mathcal F$ be a family of bounded Borel functions on $[0,\infty)$.
An \emph{operator network with $\mathcal F$-layers} is a finite composition of maps acting on finitely many channels, each of which is
\begin{enumerate}[label=(\roman*)]
    \item a layer $\psi(L_0)$, $\psi\in\mathcal F$, applied to one channel;
    \item the pointwise product of two channels;
    \item a pointwise affine map of channels, whose coefficients may depend on the constants defining the coefficient class (such as $\bar a$ below).
\end{enumerate}
We consider two admissible families:
\begin{align*}
    \Fheat&=\Bigl\{\textstyle\sum_{j\le J}a_j\me^{-t_j\lambda}\Bigr\}&&\text{(finite exponential sums)},\\
    \Fmix&=\Fheat\cup\Bigl\{\frac{s}{1+s\lambda}\Bigr\}&&\text{(one resolvent allowed)}.
\end{align*}
\end{definition}

In the explicit schemes below every parameter is prescribed by the theorems; the results are approximation (expressivity) statements about continuum operators.
The class is related to the neural-operator framework of \cite{Kov23}, with pointwise products in place of a scalar activation.

\subsection{Main result: Dirichlet spaces with a curvature lower bound}
\label{subsec:main-dirichlet}
Apart from one Fourier multiplier formula and one bookkeeping lemma, the argument of this paper is carr\'e du champ calculus, and its natural home is the following setting, which we take from Ambrosio, Gigli and Savar\'e \cite{AGS15} and make precise in Assumption~\ref{ass:M}: $(X,\mu)$ is a probability space, $\Ecal$ is a symmetric strongly local Dirichlet form on $L^2(X,\mu)$ with carr\'e du champ $\Gamma$, generator $-L_0$ and conservative Markov semigroup $P_t=\me^{-tL_0}$, the spectrum of $L_0$ on mean-zero functions lies in $[1,\infty)$ (spectral gap, normalised to $\lambda_1\ge1$), and $\Ecal$ satisfies the Bakry--\'Emery condition $\BE(-K,\infty)$ for some $K\ge0$.
For a measurable coefficient $a$ with $0<a_{\min}\le a\le a_{\max}$ and mean-zero $f\in L^2(\mu)$, the Darcy problem reads $\int a\Gamma(u,v)\dint\mu=\int fv\dint\mu$ for all $v\in\Hd$, where $\Hd$ is the space of mean-zero $u\in D(\Ecal)$ with $\norm{u}_1^2=\Ecal(u,u)$; its solution is $G(a,f)$.
With $\bar a=\tfrac12(a_{\min}+a_{\max})$, $q=(a_{\max}-a_{\min})/(a_{\max}+a_{\min})\in[0,1)$, $b:=a/\bar a-1$ (so that $\norm{b}_\infty\le q$) and $\norm{b}_{\mathcal B}:=\norm{\sqrt{\Gamma(b)}}_\infty+\norm{L_0b}_\infty$, the coefficient class is
\[
    \AB(B)=\bigl\{a:\ a_{\min}\le a\le a_{\max}\ \mu\text{-a.e.},\ b\in D(L_0),\ \norm{b}_{\mathcal B}\le B\bigr\}.
\]

\begin{thmA}[Emulation on Dirichlet spaces; Theorem~\ref{thm:dirichlet-main}]
Let Assumption~\ref{ass:M} hold, $0<a_{\min}\le a_{\max}$, $B\ge1$ and $\varepsilon\in(0,1)$.
There is an explicit operator network $N_\varepsilon$ with $\Fheat$-layers (or with $\Fmix$-layers) and input channels $(a,f)$, of depth $N=O(\log(1/\varepsilon)/\log(1/q))$, whose parameters depend only on $B$, $a_{\min}$, $a_{\max}$, $\varepsilon$ and $K$---not on $X$, $\mu$, a dimension, or the spectrum of $L_0$ beyond $\lambda_1\ge1$---such that
\[
    \sup_{a\in\AB(B)}\ \sup_{\norm{f}_{L^2}\le1,\ \int f=0}\ \norm{G(a,f)-N_\varepsilon(a,f)}_{\Hd}\le\varepsilon .
\]
\end{thmA}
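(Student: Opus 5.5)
The plan is to unroll the Laplacian-preconditioned Richardson iteration and then emulate each step using only heat layers and products. With $a=\bar a(1+b)$, the Darcy equation in weak form reads
\[
    L_0u=\bar a^{-1}f-\operatorname{div}(b\nabla u).
\]
In carr\'e du champ language, $-\operatorname{div}(b\nabla u)=bL_0u-\Gamma(b,u)$. The exact Richardson map is
\[
    T(u)=\bar a^{-1}L_0^{-1}f-L_0^{-1}\bigl(bL_0u-\Gamma(b,u)\bigr),
\]
understood weakly. It is $q$-contractive in $\Hd$, because in the energy inner product it equals $u\mapsto u-\bar a^{-1}L_0^{-1}\mathcal{L}_a u$ plus data, where $\mathcal{L}_a$ is the form operator of $a$. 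Its spectrum then lies in $[-q,q]$ since $a/\bar a\in[1-q,1+q]$. Starting from $u_0=0$ and iterating $N$ times gives error $q^N\norm{G(a,f)}_1\le q^N/a_{\min}$, because $\lambda_1\ge1$ yields $\norm{G(a,f)}_1\le a_{\min}^{-1}\norm{f}_{L^2}$. Choosing $N\approx\log(2/(a_{\min}\varepsilon))/\log(1/q)$ makes this at most $\varepsilon/2$. The remaining task is to realise each step inexactly, with per-step error $\delta$, and sum the errors. Since $T$ is a $q$-contraction in $\Hd$, the accumulated error is at most $\delta/(1-q)$, so it suffices to take $\delta=(1-q)\varepsilon/2$.

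Each inexact step has three components.
\begin{enumerate}[label=(\roman*)]
    \item The inverse $L_0^{-1}$ on mean-zero functions, as a map into $\Hd$, is replaced by $\psi(L_0)$ with $\psi(\lambda)=\sum_j w_j\me^{-t_j\lambda}$, a quadrature of $\int_0^\infty\me^{-t\lambda}\dint t$ (sinc/exponential quadrature after the substitution $t=\me^s$). We need $\sup_{\lambda\ge1}\lambda^{1/2}\abs{\psi(\lambda)-\lambda^{-1}}\le\eta$, which controls the $L^2\to\Hd$ norm by spectral calculus. This uses only $\lambda_1\ge1$ and is independent of the rest of the spectrum. In the $\Fmix$ variant the tail is handled by one resolvent instead.
    \item The product $bL_0u_k$. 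Applying $L_0$ is not an admissible layer, so I would never form $L_0u_k$ directly. Instead I would carry the iterate as $u_k=\psi(L_0)g_k$ with an $L^2$ channel $g_k$, and use $L_0\psi(L_0)g_k=\phi(L_0)g_k$, where $\phi(\lambda)=\lambda\psi(\lambda)\approx1$ is again an exponential sum: note that $\lambda\me^{-t\lambda}$ can be approximated by the difference quotient $(\me^{-t\lambda}-\me^{-(t+h)\lambda})/h$. The recursion then becomes
    \[
        g_{k+1}=\bar a^{-1}f-b\,\phi(L_0)g_k+\Gamma_t\bigl(b,\psi(L_0)g_k\bigr),
    \]
    which involves only admissible primitives. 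The channel $b$ is obtained from $a$ by an affine map.
    \item The coupling $\Gamma(b,u)$, emulated by the product defect $\Gamma_t(b,u)$ of \eqref{eq:intro-gamma}.
\end{enumerate}

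The main obstacle is to bound $\norm{\Gamma_t(b,u)-\Gamma(b,u)}_{L^2}$ uniformly for $u$ in the range of the scheme, with constants depending only on $B$ and $K$. I would use the interpolation formula
\[
    P_t(bu)-P_tbP_tu=2\int_0^t P_{t-s}\Gamma(P_sb,P_su)\dint s
\]
and compare $\Gamma(P_sb,P_su)$ with $\Gamma(b,u)$. Two inputs are needed. The first is $\BE(-K,\infty)$ in the gradient form $\sqrt{\Gamma(P_sb)}\le\me^{Ks}P_s\sqrt{\Gamma(b)}$, which gives $\norm{\Gamma(P_sb)}_\infty^{1/2}\le\me^{Ks}B$. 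The second is $\norm{P_sb-b}_\infty\le s\norm{L_0b}_\infty$. The comparison then splits into
\[
    \Gamma(P_sb-b,P_su)+\Gamma(b,P_su-u)+(P_{t-s}-I)\Gamma(b,u).
\]
The middle and last terms are small in $L^2$ provided $u$ has slightly more than $\Hd$ regularity. This is guaranteed because $u=\psi(L_0)g$ with $g\in L^2$ lies in $D(L_0)$ with $\norm{L_0u}\lesssim\norm{g}$, and it is this $\dot H^2$ bound that makes the smoothing errors $O(\sqrt t)$. The first term is the delicate one, since it is a $\Gamma$ of a small $L^\infty$ function. I would handle it by the Leibniz rule and integration by parts against the test function: the error is only needed in $\Hd$ after applying $L_0^{-1}$, i.e.\ in the $\dot H^{-1}$ norm, where
\[
    \int\Gamma(h,w)v\dint\mu=\int h\bigl(vL_0w-\Gamma(w,v)\bigr)\dint\mu
\]
puts the error onto $\norm{h}_\infty\le sB$. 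Integrating over $s\in(0,t)$ gives a per-step error $C(B,K)\sqrt t\,\norm{g_k}_{L^2}$, with $\norm{g_k}$ bounded uniformly along the iteration. Choosing $t$ and the quadrature accuracy $\eta$ as explicit functions of $\varepsilon$, $a_{\min}$, $a_{\max}$, $B$ and $K$ then completes the proof.
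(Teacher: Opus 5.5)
Your route is viable and genuinely different from the paper's, but as written its key stability step is asserted rather than proved. The paper uses the symmetric splitting $K_b=\Pi M_b+R(\Gamma(b,\cdot)-M_{L_0b})$ of Theorem~\ref{thm:dirichlet-representation}. It applies $bu$ exactly, synthesises $L_0b$ through the channel $b_\tau=(I-P_\tau)b/\tau$, and shows that one block $\Knet$ is $\eta_0$-close to $K_b$ in $\LL(\Hd)$ uniformly over $u\in\Hd$ (Theorem~\ref{thm:dirichlet-step}); the error budget is then a telescoping of $K_b^j-(\Knet)^j$. Since only $\norm{u}_1$ is available there, its $\Gamma$-consistency estimate needs the reverse local Poincar\'e inequality (Lemma~\ref{lem:bochner}) and Lemma~\ref{lem:Hminus}. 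You instead use the non-symmetric form $K_bu=R(bL_0u-\Gamma(b,u))$ and carry the iterate one regularity level higher, as an $L^2$ channel $g_k$ with $u_k=\psi(L_0)g_k$ and $\norm{L_0u_k}\le(1+\delta)\norm{g_k}$. This buys two things. You need no $L_0b$ channel at all. And the consistency estimate is simpler: $\Gamma(P_sb-b,P_su)$ is disposed of by one integration by parts against $\norm{P_sb-b}_\infty\le s\norm{L_0b}_\infty$ (it is even $O(s)$), with no reverse Poincar\'e inequality. The price is that your step is not accurate as an operator on $\Hd$. Every error term is proportional to $\norm{g_k}$: the $\psi$-versus-$R$ error, which now acts on the non-smoothing part $b\,\phi(L_0)g_k$, the difference-quotient approximation of $\lambda\psi(\lambda)$, and the $\Gamma_t$ error. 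So the argument has to be run along the trajectory rather than block by block.

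That is where the gap is: boundedness of $\norm{g_k}$ is not automatic. The linear part of $g\mapsto\bar a^{-1}f-b\,\phi(L_0)g+\Gamma_t(b,\psi(L_0)g)$ has $L^2$-norm bounded only by about $q(1+\delta)+\me^{Kt}B(1+\delta)$, which is not below $1$ for $B\ge1$, so no contraction argument in $L^2$ applies. You can close it by a bootstrap. Lemma~\ref{lem:interpolation} and (F3) give $\norm{\Gamma_t(b,w)}\le\me^{Kt}\norm{\sqrt{\Gamma(b)}}_\infty\norm{w}_1$, hence $\norm{g_{k+1}}\le\bar a^{-1}\norm{f}+q(1+\delta)\norm{g_k}+\me^{Kt}B\norm{u_k}_1$. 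Here $q(1+\delta)\le(1+q)/2$ for small $\delta$, and $\norm{u_k}_1$ is bounded by the $\Hd$-contraction as long as the earlier per-step errors are small. An induction on $k$ then bounds $\sup_k\norm{g_k}$ by a constant depending only on $B$, $q$, $a_{\min}$. This requires the per-step accuracy (the factor multiplying $\norm{f}+\norm{g_k}$ in your step error) to be small enough in terms of $\varepsilon$, $q$, $a_{\min}$ and $B$. Alternatively, accept geometric growth $\norm{g_k}\le\bar a^{-1}(CB)^k$ and shrink all accuracy parameters ($t$, $\delta$, $\Lambda^{-1/2}$, the difference quotients) by the factor $(CB)^{-N}$. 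That factor is a power of $\varepsilon$ depending on $B$ and $q$, so this is still admissible for Theorem~A.

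There are also two smaller corrections.
\begin{enumerate}
\item $(P_{t-s}-I)\Gamma(b,u)$ is not $O(\sqrt t)$ in $L^2$ under the class hypotheses. That would need $\Gamma(b,u)\in V$, i.e.\ Hessian control of $b$, which $\norm{b}_{\mathcal B}$ does not give on a general Dirichlet space. In $\norm{\cdot}_{-1}$, which is all you need, Lemma~\ref{lem:spectral} gives the bound $c_0\sqrt{t-s}\,B\norm{u}_1$.
\item Exponential sums do not annihilate constants ($\psi(0)\ne0$). Either insert $I-P_T$ as the paper does, or observe that constants are invisible to $\Gamma_t(b,\cdot)$, to your approximation of $\lambda\psi(\lambda)$ (which vanishes at $0$), and to the $\Hd$ seminorm.
\end{enumerate}
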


The scheme is the unrolled Laplacian-preconditioned Richardson iteration with the block \eqref{eq:block-intro} below; Theorem~\ref{thm:dirichlet-main} gives the parameters, and its proof rests on the $\Gamma$-consistency estimate
\[
    \norm{L_0^{-1/2}\Pi(\Gamma_t-\Gamma)(b,u)}_{L^2}\le1.37\,\me^{Kt}\sqrt t\,\norm{b}_{\mathcal B}\norm{u}_1\qquad(t>0)
\]
for the product defect \eqref{eq:intro-gamma} (Theorem~\ref{thm:gamma-consistency}), on the representation of the preconditioned operator as exact multiplication plus a smoothing remainder (Theorem~\ref{thm:dirichlet-representation}), and on an operator-norm bound for one inexact step (Theorem~\ref{thm:dirichlet-step}).
Only $\norm{b}_\infty$, $\norm{\sqrt{\Gamma(b)}}_\infty$ and $\norm{L_0b}_\infty$ enter; the curvature enters the parameters only through a factor $\me^{-2Kt_0}$ with $t_0\lesssim\varepsilon^2$.
The setting contains, after normalisation of the spectral gap, compact Riemannian manifolds with Ricci curvature bounded below, and the flat torus with $K=0$: there Theorem~A applies to the $C^2$-type class $\norm{\nabla b}_\infty+\norm{\Delta b}_\infty\le B$ with parameters that do not depend on $d$ (Corollary~\ref{cor:torus-C2}).

\subsection{The torus theorem: fractional regularity and sharpness}
\label{subsec:main-result}
On $\TT^d=(\RR/2\pi\ZZ)^d$ with $L_0=-\Delta$, Fourier analysis gives a refinement of Theorem~A to coefficient classes of fractional regularity, together with an explicit sharpness statement; this is the part of the paper that is proved first, because Section~\ref{sec:dirichlet} reuses its functional-calculus lemmas and its error budget.
The coefficient classes on $\TT^d$ are defined through Wiener norms: for $\sigma\ge0$,
\begin{equation}
    \label{eq:wiener}
    \norm{b}_{A^\sigma}:=\sum_{\mu\in\ZZ^d}(1+\abs{\mu})^\sigma\abs{\hat b_\mu},\qquad A^\sigma=\{b:\norm{b}_{A^\sigma}<\infty\}.
\end{equation}
Fix $\theta\in(0,1]$, $0<a_{\min}\le a_{\max}<\infty$ and $B\ge1$, and set
\begin{equation}
    \label{eq:intro-constants}
    \bar a=\tfrac12(a_{\min}+a_{\max}),\qquad
    q=\frac{a_{\max}-a_{\min}}{a_{\max}+a_{\min}}=\frac{\kappa-1}{\kappa+1}\in[0,1),\qquad
    \kappa=\frac{a_{\max}}{a_{\min}},
\end{equation}
so that $1-q=2/(\kappa+1)$, and
\begin{equation}
    \label{eq:intro-class}
    \Acl=\Bigl\{a\colon\TT^d\to\RR\ \Big|\ a_{\min}\le a\le a_{\max}\ \text{a.e.},\ \norm{a/\bar a-1}_{A^{1+\theta}}\le B\Bigr\}.
\end{equation}
For $a\in\Acl$ let $b:=a/\bar a-1$, $L_bu=-\operatorname{div}(b\nabla u)$ and $K_b=L_0^{-1}L_b$.
Then $G(a,f)=\bar a^{-1}(I+K_b)^{-1}L_0^{-1}f$ with $\norm{K_b}_{\LL(\Hd)}\le q<1$ (Lemma~\ref{lem:reduction}).
Truncating the Neumann series, i.e.\ unrolling the Laplacian-preconditioned Richardson iteration, is classical: iterating with fast solvers for a constant-coefficient operator on nonseparable elliptic equations goes back to Concus and Golub \cite{CG73}, preconditioning by the inverse Laplacian is analysed in \cite{NHT09,GMNS19}, and the unrolled iteration with $O(\log(1/\varepsilon)/\log(1/q))$ steps has been used to emulate Darcy-type solution operators by Kovachki, Lanthaler and Mishra \cite[Algorithm~23, Theorem~26]{KLM21} and by Marcati and Schwab \cite[Section~3 and Proposition~4.4]{MS24}.
We analyse a single step within the primitive set.
Since $K_b=\Pi M_b+\text{(smoothing remainder)}$, with $\Pi M_b$ multiplication by $b$ followed by removal of the mean (Theorem~\ref{thm:representation}; closely related to classical work \cite{FMP90,AK04,AK07,AK09,NHT09,GMNS19}), $K_b$ can be approximated \emph{in the operator norm of $\Hd$} by the block
\begin{equation}
    \label{eq:block-intro}
    \Knet u=(I-P_T)(bu)+R_m\bigl(\Gamma_t(b,u)-u\,b_\tau\bigr),\qquad b_\tau=(I-P_\tau)b/\tau,
\end{equation}
in which the pointwise part $bu$ is applied exactly and the approximate inverse $R_m=r(L_0)$ of $L_0$ is needed only as a map $L^2\to\Hd$ that is relatively accurate on a spectral band, because it acts only on the smoothing part.
In the heat family $R_m$ is a sinc-quadrature exponential sum (Lemma~\ref{lem:sinc}), in the mixed family it contains one resolvent (Lemma~\ref{lem:resolvent}), and the parameters $(t,\tau,T,r)$ are prescribed by Theorem~\ref{thm:step}.

\begin{theorem}[Emulation of the Darcy solution operator on $\TT^d$]
\label{thm:main}
Let $\theta\in(0,1]$, $0<a_{\min}\le a_{\max}$, $B\ge1$, and let $\bar a$, $q$ and $\Acl$ be as in \eqref{eq:intro-constants} and \eqref{eq:intro-class}.
For every $\varepsilon\in(0,1)$ there is an explicit operator network $N_\varepsilon$ with $\Fheat$-layers (or with $\Fmix$-layers) and input channels $(a,f)$ such that
\[
    \sup_{a\in\Acl}\ \sup_{\norm{f}_{L^2}\le1,\ \int f=0}\ \norm{G(a,f)-N_\varepsilon(a,f)}_{\Hd(\TT^d)}\le\varepsilon.
\]
Namely, with
\[
    \eta_0:=\min\Bigl\{\frac{\varepsilon\bar a(1-q)^2}{8},\ \frac{1-q}2\Bigr\},
\]
$\Knet$ and $R_m$ from Theorem~\ref{thm:step} at $(\theta,B,\eta_0)$, and $b:=a/\bar a-1$,
\[
    v_0=R_mf,\qquad v_{i+1}=R_mf-\Knet v_i\quad(0\le i<N),\qquad N_\varepsilon(a,f)=\bar a^{-1}v_N,
\]
\[
    N=\max\Bigl\{0,\Bigl\lceil\frac{\log\frac4{\varepsilon\bar a(1-q)}}{\log(1/q)}\Bigr\rceil\Bigr\}\qquad(N=0\text{ if }q=0).
\]
\end{theorem}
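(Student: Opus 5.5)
The plan is a perturbed Neumann-series argument. I read Theorem~\ref{thm:step}, at parameters $(\theta,B,\eta_0)$, as supplying two bounds, uniformly over $a\in\Acl$: first $\norm{\Knet-K_b}_{\LL(\Hd)}\le\eta_0$, and second $\norm{R_mf-L_0^{-1}f}_1\le\eta_0\norm{f}_{L^2}$ for mean-zero $f$. If the step theorem is phrased slightly differently, for example with separate tolerances for the block and for $R_m$, only the constants below change.

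Realisability within the primitive set is bookkeeping. The channel $b=a/\bar a-1$ is a pointwise affine map of $a$. The block $\Knet$ uses the ingredients below, each of which is in the class of Definition~\ref{def:class}:
\begin{itemize}
\item the product $bu$;
\item the heat layers $P_T$, $P_t$, $P_\tau$;
\item the products $P_tb\,P_tu$ and $u\,b_\tau$;
\item the layer $R_m=r(L_0)$, with $r\in\Fheat$ or $r\in\Fmix$.
\end{itemize}
Composing $N$ copies of the block, with $f$ carried along in its own channel, gives $N_\varepsilon$.

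Set $w=(I+K_b)^{-1}L_0^{-1}f$, so that $G(a,f)=\bar a^{-1}w$ by Lemma~\ref{lem:reduction}. Let $w_0=L_0^{-1}f$ and $w_{i+1}=L_0^{-1}f-K_bw_i$ be the exact Richardson iterates.

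\textbf{Truncation error.} Since $\lambda_1\ge1$, we have $\norm{L_0^{-1}f}_1=\norm{L_0^{-1/2}f}_{L^2}\le1$. Together with $\norm{K_b}_{\LL(\Hd)}\le q$ this gives $\norm{w}_1\le1/(1-q)$. The iterates satisfy $w-w_N=(-K_b)^{N+1}w$, so
\[
\norm{w-w_N}_1\le\frac{q^{N+1}}{1-q}.
\]
The choice of $N$ gives $q^N\le\varepsilon\bar a(1-q)/4$, hence $\norm{w-w_N}_1\le\varepsilon\bar a/4$. When $q=0$ we have $K_b=0$ and $w=w_0$, so this term vanishes.

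\textbf{Perturbation error.} Put $e_i=v_i-w_i$. Then
\[
e_{i+1}=(R_m-L_0^{-1})f-(\Knet-K_b)v_i-K_be_i .
\]
First bound the network iterates. Since $\eta_0\le(1-q)/2$, we get $\norm{\Knet}_{\LL(\Hd)}\le q+\eta_0\le(1+q)/2$. Also $\norm{R_mf}_1\le1+\eta_0$. Induction on the recursion then gives
\[
\norm{v_i}_1\le\frac{2(1+\eta_0)}{1-q}\le\frac{3}{1-q}.
\]
Next bound the errors. We have $\norm{e_0}_1\le\eta_0$, and for each step
\[
\norm{e_{i+1}}_1\le\eta_0+\frac{3\eta_0}{1-q}+q\norm{e_i}_1\le\frac{4\eta_0}{1-q}+q\norm{e_i}_1 .
\]
Summing the geometric recursion yields
\[
\norm{e_N}_1\le\frac{4\eta_0}{(1-q)^2}\le\frac{\varepsilon\bar a}{2},
\]
where the last step uses $\eta_0\le\varepsilon\bar a(1-q)^2/8$.

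\textbf{Conclusion.} Combining the two bounds,
\[
\norm{G(a,f)-N_\varepsilon(a,f)}_1\le\bar a^{-1}\bigl(\norm{w-w_N}_1+\norm{e_N}_1\bigr)\le\tfrac34\varepsilon<\varepsilon .
\]
This holds uniformly in $a$ and in $f$, because every constant used depends only on $q$, $\bar a$ and $\eta_0$.

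\textbf{Main obstacle.} The real difficulty sits inside Theorem~\ref{thm:step}: making $\norm{\Knet-K_b}_{\LL(\Hd)}\le\eta_0$ hold in operator norm on all of $\Hd$, not only on a band of frequencies. Given that theorem, the only point needing care here is the a priori bound on $\norm{v_i}_1$. This bound is exactly why $\eta_0$ is capped at $(1-q)/2$: the cap keeps the perturbed iteration contractive with a uniform margin.
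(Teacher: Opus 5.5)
Your proof is correct and is essentially the paper's perturbed Neumann-series budget: the same Neumann tail $q^{N+1}/(1-q)$ and the same use of $\eta_0\le\varepsilon\bar a(1-q)^2/8$ and $\eta_0\le(1-q)/2$; the only difference is bookkeeping, since the paper sums closed-form powers via the telescoping bound $\norm{K_b^j-(\Knet)^j}\le j\eta_0q'^{\,j-1}$, whereas you propagate the iterate error recursively using the a priori bound on $\norm{v_i}_1$. Two small points: the bound $\norm{(R_m-R)f}_1\le\eta_0\norm{f}$ is not part of Theorem~\ref{thm:step} but follows from \eqref{eq:HR-consequences}, because the prescribed parameters give $\delta+2\Lambda^{-1/2}\le\eta_0/(10B)$; and you should state explicitly that the $\LL(\Hd)$ bounds apply to the $v_i$ because $R_mf$ and the output of $\Knet$ have mean zero.
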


\begin{remark}[Structure of the construction]
\label{rem:structure}
All constants are independent of $d$ for the class $\Acl$ defined through the Wiener norm; if the class is described instead through $H^\sigma(\TT^d)$ with $\sigma>1+\theta+d/2$, the dimension enters through $B$ via the embedding constant.
A Wiener ball is an $\ell^1$-type class, much smaller than a Sobolev ball when $d$ is large.
For $\theta=1$, Theorem~\ref{thm:dirichlet-main} (through Corollary~\ref{cor:torus-C2}) dominates Theorem~\ref{thm:main} in every respect: it covers the strictly larger class $\{\norm{\nabla b}_\infty+\norm{\Delta b}_\infty\le B\}$, which contains the Wiener ball $\{\norm{b}_{A^2}\le B/2\}$, with the same scheme, the same depth $N$ and $d$-independent constants, and with a much better recipe---at equal $B$, the band radius is $\sqrt\Lambda=20B/\eta_0$ instead of $60B/\eta_0$, the heat time satisfies $t^{-1/2}=5B/\eta_0$ instead of $\sqrt{40}\cdot20B/\eta_0\approx126B/\eta_0$, so that the resolution scale $\Kreq$ of Remark~\ref{rem:resolution-intro} is smaller by a factor of about $6$ and the multiplier $1/(2t)$ of the product defect by a factor $640$.
These factors compare the two recipes at the same value of $B$, that is, on different classes; on the same class, the Wiener ball $\{\norm{b}_{A^2}\le B\}$, which lies in the class of Corollary~\ref{cor:torus-C2} with $2B$ in place of $B$, they are about $1.5$ for the band radius, $12.6$ for $t^{-1/2}$, $3.2$ for $\Kreq$ and $160$ for $1/(2t)$.
The numerical consequence is visible in Table~\ref{tab:numerics}.
Theorem~\ref{thm:main} is therefore the main result only for $\theta<1$, where it covers coefficients with fewer than two derivatives in the Wiener scale, and for the sharpness statement of Remark~\ref{rem:sharpness}.

The construction is linear in $f$, and $N_\varepsilon(a,f)=N_\varepsilon(a,\Pi f)$ because $R_m$ and $I-P_T$ annihilate constants; for fixed $f$ it is a polynomial map of degree $N$ in $a$; and it uses no activation function.
Indeed, $v_N=\sum_{j=0}^N(-\Knet)^jR_mf$ and $\Knet$ is linear in $b=a/\bar a-1$, so the $j$-th term is a $j$-linear form in $b$, and $\bar a$ is a class constant.
The depth $N$ is $O(\log(1/\varepsilon)/\log(1/q))$, all blocks share their parameters, and all parameters are functions of $(\theta,B,a_{\min},a_{\max},\varepsilon)$.
The heat family uses $(N+1)(m+3)$ heat applications (Proposition~\ref{prop:counts}); in the mixed family the scheme is determined by the five theorem-prescribed scalars $(s,t,\tau,T,\bar a)$ and the integer $N$, and uses $3N+3$ heat and $N+1$ resolvent applications (Corollary~\ref{cor:mixed}).
\end{remark}

\begin{remark}[Resolution]
\label{rem:resolution-intro}
Theorem~\ref{thm:main} concerns continuum operators, and logarithmic depth and operator-application counts do not imply a cheap discretised realisation.
The proof controls the heat layers up to the frequency
\[
    \Kreq=\max\{\sqrt\Lambda,t^{-1/2}\}\asymp\Bigl(\frac{B\kappa^2}{\varepsilon\bar a}\Bigr)^{1/\theta}
\]
per direction (Proposition~\ref{prop:counts}(e)); a spectral discretisation on which the proved bounds are guaranteed must resolve these frequencies, which leads to arithmetic cost polynomial in $1/\varepsilon$ with exponent $d/\theta$ (Section~\ref{subsec:discretised}).
This requirement is sufficient for the proof and is not shown to be necessary for the scheme; the example of Section~\ref{sec:numerics} works on a far coarser grid.
On the other hand, every band-limited approximation of $\Hd$ output from $L^2$ data needs $K\gtrsim1/(\varepsilon\bar a)$ modes per direction.

For $\theta=1$ the two scalings have the same exponent, with a large constant ($\Kreq\approx253\,(\kappa+1)^2B/(\varepsilon\bar a)$ in the generic case for the recipe of Theorem~\ref{thm:step}, and $\Kreq=20B/\eta_0=40(\kappa+1)^2B/(\varepsilon\bar a)$ for the recipe of Theorem~\ref{thm:dirichlet-main}), while for $\theta<1$ the exponent $1/\theta$ is larger.
The rate $t^{\theta/2}$ that produces this exponent is attained for the block \eqref{eq:block-intro} over the $A^{1+\theta}$ ball (Remark~\ref{rem:sharpness}), so it cannot be removed by sharper estimates of this block under the present regularity.
\end{remark}

\subsection{What is new and what is classical}
The contribution of this paper consists of the following points.
\begin{enumerate}[label=(\alph*)]
    \item Theorem~A: the emulation theorem on strongly local Dirichlet spaces under $\BE(-K,\infty)$, with geometry-independent parameters, explicit constants and explicit curvature dependence (Section~\ref{sec:dirichlet}); its core is the $\Gamma$-consistency estimate of Theorem~\ref{thm:gamma-consistency}, formulated in the dual norm $\norm{L_0^{-1/2}\Pi\,\cdot\,}_{L^2}$ so that only $\sqrt{\Gamma(b)}$ and $L_0b$, not $L_0^2b$, need to be bounded.
    \item The quantitative $\Gamma_t$ mechanism: the synthesis of $\nabla a\cdot\nabla u$ from the product defect of a single heat layer, with an exact multiplier formula on $\TT^d$ (Lemma~\ref{lem:product-defect}), its use in $\LL(\Hd)$ after smoothing, and the sharpness of its rate (Remark~\ref{rem:sharpness}).
    \item An operator-norm analysis of the inexact preconditioned step on $\Hd$ without Galerkin truncation, in which the approximate inverse is needed only as an $L^2\to\Hd$ map with relative accuracy on a spectral band (Theorems~\ref{thm:step} and~\ref{thm:dirichlet-step}). This is a feature of analysing continuum layers; its price is the resolution requirement of Remark~\ref{rem:resolution-intro}.
    \item Theorem~\ref{thm:main} as a statement: $L^2$ data, explicit parameters, uniformity over Wiener-class coefficients of fractional regularity $1+\theta$, constants independent of $d$ for that class; and, on $\TT^d$, the $C^2$-type class of Corollary~\ref{cor:torus-C2}, which is strictly larger than any Wiener ball.
\end{enumerate}
The following ingredients are classical and are attributed: the unrolled Richardson/Neumann iteration \cite{KLM21,MS24}, with the classical midpoint normalisation $\bar a$; the structure ``Laplacian-preconditioned operator $=$ multiplication-type operator $+$ compact remainder'', which is closely related to the theory of equivalent and compact-equivalent operators \cite{FMP90,AK04,AK07,AK09} and to the spectral analysis of Laplacian preconditioning \cite{NHT09,GMNS19}; exponential-sum and sinc approximations of $1/\lambda$ \cite{BH05,BH26}; the semigroup interpolation identity \cite{BGL14}; and the $\Gamma$-calculus consequences of $\BE(K,\infty)$ on Dirichlet spaces \cite{AGS15}.
Polylogarithmic parameter counts for Darcy-type operators are known for analytic data \cite{MS23,MS24}.
Heat layers are not necessary for the approximate inverse: one resolvent can replace the exponential sums (Corollary~\ref{cor:mixed}).
They remain the carrier of the coupling $\nabla a\cdot\nabla u$ in both families.

\subsection{Scope and organisation}
The paper does not claim performance of trained networks or learnability, computational efficiency after discretisation (Remark~\ref{rem:resolution-intro}), superiority to classical solvers, or dimension-free arithmetic complexity; independence of $d$ refers to the constants for the stated coefficient classes.
The parameters are prescribed by the theorems for a problem-adapted class, which contains the Laplacian-preconditioned Richardson solver by construction.
With the proved constants the construction is numerically ill-conditioned at small $\varepsilon$---severely so for the Wiener-class recipe of Theorem~\ref{thm:step}, less so for the recipe of Theorem~\ref{thm:dirichlet-main} (Remark~\ref{rem:floating} and Section~\ref{sec:numerics}).

Section~\ref{sec:prep} collects the technical preparation on $\TT^d$, Section~\ref{sec:main} proves Theorem~\ref{thm:main}, Section~\ref{sec:realisations} gives the counts and discusses complexity, Section~\ref{sec:numerics} is a numerical sanity check of both parameter recipes, Section~\ref{sec:dirichlet} proves Theorem~A, and Section~\ref{sec:conclusion} concludes.
Appendix~\ref{app:sinc} proves the sinc-quadrature lemma.

\section{Technical preparation}
\label{sec:prep}

In this section we collect the notation, the auxiliary estimates and the structural identities on $\TT^d$ that will be used in the proof of Theorem~\ref{thm:main}.

\subsection{Setting and notation}
\label{sec:setting}

We equip $\TT^d=(\RR/2\pi\ZZ)^d$ with the normalised Lebesgue measure and write $\hat u_k=\int_{\TT^d}u(x)\me^{-ik\cdot x}\dint x$, $k\in\ZZ^d$, so that $\norm{u}^2=\sum_k\abs{\hat u_k}^2$, where $\norm{\cdot}$ is the $L^2$ norm.
The reference operator is $L_0=-\Delta$, with Fourier symbol $\abs{k}^2$; its spectral gap on mean-zero functions is $\lambda_1=1$.
For a bounded Borel function $\psi$ on $[0,\infty)$, the operator $\psi(L_0)$ is the Fourier multiplier with symbol $\psi(\abs{k}^2)$.
In particular
\[
    P_t=\me^{-tL_0}\ (\text{symbol } \me^{-t\abs{k}^2}),\qquad
    \Pi\ (\text{symbol } \one_{k\neq0}),\qquad
    R=L_0^{-1}\Pi\ (\text{symbol } \abs{k}^{-2}\one_{k\neq0}).
\]
$\Pi$ is the projection onto mean-zero functions.
We write $\Hd$ for the space of mean-zero $u$ with inner product $\inner{u,v}_{\Hd}=\int\nabla u\cdot\overline{\nabla v}$ and norm
\[
    \norm{u}_1:=\norm{\nabla u}=\Bigl(\sum_k\abs{k}^2\abs{\hat u_k}^2\Bigr)^{1/2}<\infty,
\]
and $\Hdd$ for the analogous space with $\norm{u}_{\Hdd}=(\sum_k\abs{k}^4\abs{\hat u_k}^2)^{1/2}$.
Since $\abs{k}\ge1$ for $k\ne0$, the Poincar\'e inequality $\norm{u}\le\norm{u}_1$ holds with constant $1$ in every dimension.
$\LL(X)$ denotes the bounded operators on $X$, and $\norm{Rf}_1\le\norm{f}$ for $f\in L^2$.

For the Wiener norms \eqref{eq:wiener} we have $\norm{b}_\infty\le\norm{b}_{A^0}$, $\norm{\nabla b}_\infty\le\norm{b}_{A^1}$, $\norm{L_0b}_\infty\le\norm{b}_{A^2}$, $\norm{bu}\le\norm{b}_{A^0}\norm{u}$, and $\sigma\mapsto\norm{b}_{A^\sigma}$ is non-decreasing.
The constants $\bar a$, $q$, $\kappa$ and the class $\Acl$ are those of \eqref{eq:intro-constants} and \eqref{eq:intro-class}.
For $a\in\Acl$ and $b=a/\bar a-1$, since $a_{\min}/\bar a-1=-q$ and $a_{\max}/\bar a-1=q$, we have $\norm{b}_\infty\le q$.
For $f\in L^2(\TT^d)$ with $\int f=0$, $G(a,f)\in\Hd$ denotes the unique mean-zero weak solution of $-\operatorname{div}(a\nabla u)=f$.
For real $b\in L^\infty$ we let $L_bu=-\operatorname{div}(b\nabla u)\in(\Hd)'$ for $u\in\Hd$, and $K_b:=L_0^{-1}L_b\in\LL(\Hd)$ its Riesz representative, i.e.\ $\inner{K_bu,v}_{\Hd}=\int b\nabla u\cdot\overline{\nabla v}$.

For $b\in A^1$ and $u\in\Hd$ let $\Gamma(b,u)=\nabla b\cdot\nabla u$, so that $\norm{\Gamma(b,u)}\le\norm{b}_{A^1}\norm{u}_1$.
For $t,\tau>0$ the product defect $\Gamma_t(b,u)$ is defined by \eqref{eq:intro-gamma}, and $b_\tau=(I-P_\tau)b/\tau$.
Throughout, Fourier indices are related by $k'=k+\mu$: the output frequency is $k'$, the frequency of $u$ is $k$ and that of $b$ is $\mu$.
In this notation $\widehat{\Gamma(b,u)}_{k'}=-\sum_\mu\hat b_\mu(\mu\cdot k)\hat u_k$.

All operators analysed below act on $u\in\Hd$ by
\begin{equation}
    \label{eq:conv-form}
    (\mathcal Ou)^\wedge_{k'}=\sum_\mu\hat b_\mu\,c(k',\mu)\,\hat u_{k'-\mu},\qquad c(0,\mu)=0,
\end{equation}
for a scalar function $c$.
For $u\in\Hd$ put $w_k=\abs{k}\hat u_k$ for $k\ne0$ (and $\hat u_0=0$), so that $\norm{w}_{\ell^2}=\norm{u}_1$.

\begin{lemma}
\label{lem:bookkeeping}
For $\mathcal O$ of the form \eqref{eq:conv-form},
\begin{equation}
    \label{eq:bookkeeping}
    \norm{\mathcal Ou}_1\le\Bigl(\sum_\mu\abs{\hat b_\mu}\sup_{k\ne0,\ k+\mu\ne0}m_\mu(k)\Bigr)\norm{u}_1,\qquad
    m_\mu(k):=\frac{\abs{k+\mu}\,\abs{c(k+\mu,\mu)}}{\abs{k}}.
\end{equation}
Moreover, for $k\ne0\ne k'=k+\mu$,
\begin{equation}
    \label{eq:F1}
    \frac{\abs{\mu}^2}{\abs{k'}\abs{k}}\le2\abs{\mu}.
\end{equation}
\end{lemma}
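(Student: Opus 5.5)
The bound \eqref{eq:bookkeeping} is a Schur-test / Minkowski argument in the weighted variables $w_k=\abs{k}\hat u_k$. I would first rewrite the $\Hd$ norm of the output in terms of $w$, then split the sum over $\mu$ with the triangle inequality in $\ell^2$. Inequality \eqref{eq:F1} is an elementary consequence of the triangle inequality for vectors in $\ZZ^d$ together with the lattice bound $\abs{k}\ge1$ for $k\ne0$.

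\textbf{Step 1.} For $u\in\Hd$ we have $\norm{\mathcal Ou}_1=\bigl(\sum_{k'\ne0}\abs{k'}^2\abs{(\mathcal Ou)^\wedge_{k'}}^2\bigr)^{1/2}$, and the term $k'=0$ is absent because $c(0,\mu)=0$. Write
\[
    \abs{k'}(\mathcal Ou)^\wedge_{k'}=\sum_\mu\hat b_\mu\,\frac{\abs{k'}\,c(k',\mu)}{\abs{k'-\mu}}\,w_{k'-\mu},
\]
where only indices with $k:=k'-\mu\ne0$ contribute, since $\hat u_0=0$. The coefficient is exactly $m_\mu(k)$ up to its phase.

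\textbf{Step 2.} For fixed $\mu$, let $F_\mu$ be the sequence $k'\mapsto\hat b_\mu\,(\text{coefficient})\,w_{k'-\mu}$, supported on $k'\ne0$, $k'-\mu\ne0$. Its $\ell^2$ norm is at most $\abs{\hat b_\mu}\sup_{k\ne0,\,k+\mu\ne0}m_\mu(k)\,\norm{w}_{\ell^2}$, because $k'\mapsto k'-\mu$ is a bijection of $\ZZ^d$. Minkowski's inequality over $\mu$ in $\ell^2(\ZZ^d)$, valid once the right-hand side is finite (otherwise there is nothing to prove), gives \eqref{eq:bookkeeping}, using $\norm{w}_{\ell^2}=\norm{u}_1$. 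The only point needing care is justifying the interchange of the $\mu$-sum and the norm. Absolute convergence follows from the finiteness of the majorant, so the output is a well-defined sequence.

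\textbf{Step 3.} For \eqref{eq:F1}, note that $\mu=k'-k$, so $\abs{\mu}\le\abs{k'}+\abs{k}$. Hence
\[
    \frac{\abs{\mu}^2}{\abs{k'}\abs{k}}\le\abs{\mu}\,\frac{\abs{k'}+\abs{k}}{\abs{k'}\abs{k}}=\abs{\mu}\Bigl(\frac1{\abs{k}}+\frac1{\abs{k'}}\Bigr)\le2\abs{\mu},
\]
since $\abs{k},\abs{k'}\ge1$ for nonzero lattice points. No step here is a real obstacle. The one mildly delicate point is Step 2's handling of the constraint set, where the supremum is taken only over $k$ with $k\ne0$ and $k+\mu\ne0$. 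This restriction is exactly what the vanishing of $\hat u_0$ and of $c(0,\cdot)$ permits.
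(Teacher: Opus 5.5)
Your proposal is correct and follows essentially the same route as the paper. For \eqref{eq:bookkeeping} you apply Minkowski's inequality over $\mu$ in $\ell^2(k')$ after the substitution $k=k'-\mu$. For \eqref{eq:F1} you use $\abs{\mu}\le\abs{k}+\abs{k'}$ together with $\abs{k},\abs{k'}\ge1$. Your remark on the absolute convergence of the $\mu$-series is a correct detail that the paper leaves implicit.
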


\begin{proof}
By Minkowski's inequality in $\ell^2(k')$ and the substitution $k=k'-\mu$,
\[
    \norm{\mathcal Ou}_1\le\sum_\mu\abs{\hat b_\mu}\Bigl(\sum_{k\ne0,\ k+\mu\ne0}\Bigl|\frac{\abs{k+\mu}\,c(k+\mu,\mu)}{\abs{k}}\Bigr|^2\abs{w_k}^2\Bigr)^{1/2},
\]
which gives \eqref{eq:bookkeeping}.
For \eqref{eq:F1}, $\abs{\mu}=\abs{k'-k}\le\abs{k'}+\abs{k}$ gives $\abs{\mu}^2/(\abs{k'}\abs{k})\le\abs{\mu}(1/\abs{k}+1/\abs{k'})\le2\abs{\mu}$.
\end{proof}

We shall also use the elementary inequality $\min\{1,x\}\le x^{\theta/2}$ for $x\ge0$ and $\theta\in(0,1]$.

\subsection{Functional-calculus approximate inverses}
\label{sec:inverse}

We first formulate the only property of the approximate inverse that is used, for a general function of $L_0$, and then give two realisations: an exponential sum (heat family) and a single resolvent (mixed family).

\begin{definition}[Band hypothesis]
\label{def:HR}
Let $\delta\in(0,1]$ and $\Lambda\in[1,\infty]$.
A function $r\colon[0,\infty)\to\RR$ with $r(0)=0$ satisfies \HR{} if
\[
    \abs{\lambda r(\lambda)-1}\le\delta\ \text{ for }\lambda\in[1,\Lambda],
    \qquad
    0\le\lambda r(\lambda)\le1+\delta\ \text{ for }\lambda\in[1,\infty).
\]
For $\Lambda=\infty$ the first condition is required on $[1,\infty)$.
We then write $R_m:=r(L_0)$.
\end{definition}

Since $r(0)=0$, $R_m=R_m\Pi$ annihilates constants.
Only the values of $r$ at $\lambda=\abs{k}^2\ge1$, $k\ne0$, matter.

\begin{lemma}[Consequences of \HR]
\label{lem:HR}
If $r$ satisfies \HR{}, then for $\lambda\ge1$
\[
    \abs{r(\lambda)-\lambda^{-1}}\le\lambda^{-1}\bigl(\delta+2\cdot\one_{\lambda>\Lambda}\bigr),\qquad
    \abs{r(\lambda)}\le(1+\delta)\lambda^{-1},
\]
and for every $g\in L^2$
\begin{equation}
    \label{eq:HR-consequences}
    \norm{(R_m-R)g}_1\le(\delta+2\Lambda^{-1/2})\norm{g},\qquad\norm{R_mg}_1\le(1+\delta)\norm{g},
\end{equation}
with $\Lambda^{-1/2}:=0$ if $\Lambda=\infty$.
\end{lemma}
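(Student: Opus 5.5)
The proof is a scalar estimate on the spectrum, followed by Parseval. Fix $\lambda\ge1$ and write $\lambda r(\lambda)-1=:e(\lambda)$, so that $r(\lambda)-\lambda^{-1}=\lambda^{-1}e(\lambda)$.

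The first pointwise bound comes from two cases. On $[1,\Lambda]$, the first condition of \HR{} gives $\abs{e}\le\delta$. For $\lambda>\Lambda$, the second condition gives $-1\le e\le\delta$, so $\abs{e}\le\max\{1,\delta\}\le 1\le\delta+2$. Hence $\abs{e(\lambda)}\le\delta+2\cdot\one_{\lambda>\Lambda}$ in both cases. The second pointwise bound, $\abs{r(\lambda)}\le(1+\delta)\lambda^{-1}$, is immediate from $0\le\lambda r(\lambda)\le1+\delta$.

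For the operator bounds I use the Fourier representation. Both $R_m$ and $R$ annihilate the zero mode, since $r(0)=0$ and $R$ has symbol $\abs{k}^{-2}\one_{k\neq0}$. So for $g\in L^2$,
\[
\norm{(R_m-R)g}_1^2=\sum_{k\ne0}\abs{k}^2\,\abs{r(\abs{k}^2)-\abs{k}^{-2}}^2\abs{\hat g_k}^2\le\sum_{k\ne0}\abs{k}^{-2}\bigl(\delta+2\cdot\one_{\abs{k}^2>\Lambda}\bigr)^2\abs{\hat g_k}^2 .
\]
Fix $k\ne0$ and bound the multiplier $\abs{k}^{-1}(\delta+2\cdot\one_{\abs{k}^2>\Lambda})$:
\begin{itemize}
\item since $\abs{k}\ge1$, we have $\abs{k}^{-1}\delta\le\delta$;
\item when $\abs{k}^2>\Lambda$, we have $2\abs{k}^{-1}<2\Lambda^{-1/2}$, and this term vanishes when $\Lambda=\infty$.
\end{itemize}
The multiplier is therefore at most $\delta+2\Lambda^{-1/2}$, and Parseval gives the first inequality of \eqref{eq:HR-consequences}.

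Similarly,
\[
\norm{R_mg}_1^2=\sum_{k\neq0}\abs{k}^2 r(\abs{k}^2)^2\abs{\hat g_k}^2\le\sum_{k\ne0}(1+\delta)^2\abs{k}^{-2}\abs{\hat g_k}^2\le(1+\delta)^2\norm{g}^2,
\]
again using $\abs{k}\ge1$. This is the second inequality.

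There is no real obstacle here. The only care needed is the case $\lambda>\Lambda$, where \HR{} controls just the one-sided range $[0,1+\delta]$ of $\lambda r(\lambda)$. This yields the constant $\max\{1,\delta\}\le1$, which is absorbed into the stated $2$, and the factor $\lambda^{-1/2}$ left over from the $\Hd$ weight supplies the $\Lambda^{-1/2}$.
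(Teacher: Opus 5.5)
Your proof is correct and matches the paper's argument essentially line by line. It uses the same two-case scalar bound on $\lambda r(\lambda)-1$, with $\delta\le1$ invoked for $\lambda>\Lambda$. It then passes to the Fourier/spectral sum and bounds the multiplier $\lambda^{-1/2}\bigl(\delta+2\cdot\one_{\lambda>\Lambda}\bigr)$ for $\lambda\ge1$ by $\delta+2\Lambda^{-1/2}$.
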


\begin{proof}
On $[1,\Lambda]$, $\abs{r-\lambda^{-1}}=\lambda^{-1}\abs{\lambda r-1}\le\delta\lambda^{-1}$; for $\lambda>\Lambda$, $\lambda r\in[0,1+\delta]$ gives $\abs{\lambda r-1}\le1\le\delta+2$.
The bound on $\abs r$ is immediate.
By the spectral theorem,
\begin{multline*}
    \norm{(R_m-R)g}_1^2=\sum_{k\ne0}\abs{k}^2\abs{r(\abs{k}^2)-\abs{k}^{-2}}^2\abs{\hat g_k}^2\\
    \le\sup_{\lambda\ge1}\lambda^{-1}(\delta+2\cdot\one_{\lambda>\Lambda})^2\norm{g}^2
    \le(\delta+2\Lambda^{-1/2})^2\norm{g}^2,
\end{multline*}
and the second estimate in \eqref{eq:HR-consequences} follows in the same way.
\end{proof}

Lemma~\ref{lem:HR} uses functional calculus only; the heat semigroup enters through the following realisation.

\begin{lemma}[Sinc-quadrature approximate inverse]
\label{lem:sinc}
Let $\delta_0\in(0,1]$ and $\Lambda\ge1$.
Put
\[
    h=\frac{\pi^2}{\pi^2+2\log(1/\delta_0)},\quad
    M=\Bigl\lceil\frac{\log(8\Lambda/\delta_0)}{h}\Bigr\rceil-1,\quad
    \Nsinc=\Bigl\lceil\frac{\log\bigl(2\log\frac{8}{h\delta_0}\bigr)}{h}\Bigr\rceil-1,\quad
    T=\log\frac5{\delta_0},
\]
and
\[
    r(\lambda)=\bigl(1-\me^{-T\lambda}\bigr)\,h\sum_{j=-M}^{\Nsinc}\me^{jh}\me^{-\lambda\me^{jh}}.
\]
Then $r(0)=0$ and $r$ satisfies \HR{} on $[1,\Lambda]$ with $\delta=\delta_0$.
The operator $R_m=r(L_0)=(I-P_T)\,h\sum_{j=-M}^{\Nsinc}\me^{jh}P_{\me^{jh}}$ is a composition of $\Fheat$-layers using $m:=M+\Nsinc+1$ heat times $\me^{jh}$ and the further time $T$, and
\[
    m=O\Bigl(\log\frac1{\delta_0}\,\log\frac{\Lambda}{\delta_0}\Bigr).
\]
\end{lemma}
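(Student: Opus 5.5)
The plan is to read $r$ as a trapezoidal (sinc) quadrature of the integral representation
\[
\lambda^{-1}=\int_0^\infty \me^{-\lambda s}\dint s=\int_{\RR}\me^{x}\me^{-\lambda\me^{x}}\dint x ,
\]
truncated to $-M\le j\le\Nsinc$ and then multiplied by the cut-off factor $1-\me^{-T\lambda}$. Writing $g(y)=\me^{y}\me^{-\me^{y}}$, which satisfies $\int g=1$, we have
\[
\lambda\, h\sum_j \me^{jh}\me^{-\lambda\me^{jh}}=h\sum_j g(jh+\log\lambda).
\]
So $\lambda r(\lambda)$ splits into three pieces. The first is the infinite shifted trapezoidal sum $S(\lambda):=h\sum_{j\in\ZZ}g(jh+\log\lambda)$. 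The second is minus the two tails. The third is the factor $1-\me^{-T\lambda}$.

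The identity $r(0)=0$ is immediate from the factor $1-\me^{-T\lambda}$. Positivity $\lambda r\ge0$ is immediate since every term is nonnegative and $1-\me^{-T\lambda}\in[0,1]$. The operator form follows from the functional calculus: $r(L_0)=(I-P_T)\,h\sum_j\me^{jh}P_{\me^{jh}}$. The count
\[
m=M+\Nsinc+1=O\bigl(h^{-1}\log(\Lambda/\delta_0)\bigr)+O\bigl(h^{-1}\log\log(1/\delta_0)\bigr)
\]
together with $h^{-1}\asymp1+\log(1/\delta_0)$ gives the stated order.

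\textbf{Discretisation error (the main obstacle).} I would use Poisson summation:
\[
S(\lambda)-1=\sum_{n\ne0}\hat g(2\pi n/h)\,\me^{2\pi i n\log\lambda/h}.
\]
This gives a bound uniform in the shift $\log\lambda$. Here $\hat g(\omega)=\int g(y)\me^{-i\omega y}\dint y=\Gamma(1-i\omega)$, and $\abs{\Gamma(1-i\omega)}^2=\pi\omega/\sinh(\pi\omega)$. Hence
\[
\abs{S(\lambda)-1}\le2\sum_{n\ge1}\Bigl(\frac{2\pi^2n/h}{\sinh(2\pi^2n/h)}\Bigr)^{1/2}\lesssim \sqrt{\pi^2/h}\;\me^{-\pi^2/h}.
\]
The choice of $h$ gives $\me^{-\pi^2/h}=\me^{-\pi^2}\delta_0^2$. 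The remaining task is to check, with explicit constants, that this is at most $\delta_0/4$ for all $\delta_0\le1$. The square-root prefactor grows only like $\sqrt{\log(1/\delta_0)}$, which is absorbed by the spare factor $\delta_0\me^{-\pi^2}$. This uniform bound also yields the global condition $\lambda r(\lambda)\le S(\lambda)\le1+\delta_0$ for all $\lambda\ge1$, because dropping nonnegative tail terms and multiplying by $1-\me^{-T\lambda}\le1$ only decreases the sum.

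\textbf{Tails and cut-off on $[1,\Lambda]$.} The lower tail $j<-M$ is bounded by a geometric series, using $g(y)\le\me^{y}$:
\[
h\sum_{j<-M}\lambda\me^{jh}\le\Lambda\,h\,\me^{-Mh}\bigl(1-\me^{-h}\bigr)^{-1}\cdot\me^{-h}.
\]
Here $h(1-\me^{-h})^{-1}\me^{-h}\le1$, and $(M+1)h\ge\log(8\Lambda/\delta_0)$, so this tail is at most $\delta_0/8$.

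The upper tail $j>\Nsinc$ uses $\lambda\ge1$ and the monotone decay of $g$ for $y>0$. By comparison with the integral, it is at most
\[
\int_{(\Nsinc+1)h-h}^\infty g+h\,g\bigl((\Nsinc+1)h\bigr)\lesssim(1+h)\exp\bigl(-\me^{(\Nsinc+1)h}\bigr)\cdot\me^{(\Nsinc+1)h}.
\]
Since $\me^{(\Nsinc+1)h}\ge2\log\frac{8}{h\delta_0}$, the exponential factor is at most $(h\delta_0/8)^2$, and this beats the prefactor. Finally, $1-\me^{-T\lambda}\ge1-\delta_0/5$ for $\lambda\ge1$.

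Collecting the three estimates, with budget $\delta_0/4+\delta_0/8+\delta_0/8+\delta_0/5\cdot(1+\delta_0)\le\delta_0$, gives $\abs{\lambda r(\lambda)-1}\le\delta_0$ on $[1,\Lambda]$. The detailed constant chase, in particular the explicit Gamma-function bound and the integral comparison for the upper tail, is routine bookkeeping.
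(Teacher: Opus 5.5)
Your proposal follows the paper's proof in the appendix step for step. Both use Poisson summation for $\phi(s)=\exp(s-\me^s)$ with $\abs{\hat\phi(\omega)}^2=\pi\omega/\sinh(\pi\omega)$, and the choice of $h$ that gives $\me^{-\pi^2/h}=\me^{-\pi^2}\delta_0^2$. Both then use a geometric bound for the lower tail, a monotone integral comparison for the upper tail, and the cut-off loss $\me^{-T}=\delta_0/5$.

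Only your constants need repair. Your lower-tail inequality gives $\Lambda\me^{-Mh}\le\me^{h}\delta_0/8$, which can reach about $0.34\,\delta_0$, not $\delta_0/8$. Keep the factor $\me^{-(M+1)h}$ and use $h/(1-\me^{-h})<1.6$ to get at most $\delta_0/4$, as the paper does. Start the upper-tail integral at $(\Nsinc+1)h$; this gives $(hX+1)\me^{-X}\le\delta_0/4$ with $X=\me^{(\Nsinc+1)h}$. Bound the cut-off loss by $(1+\delta_0/4)\me^{-T}\le\delta_0/4$. The four contributions then sum to at most $\delta_0$.
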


The proof, by Poisson summation, is given in Appendix~\ref{app:sinc}, and an explicit bound for $m$ in Proposition~\ref{prop:counts}.
Exponential-sum and sinc approximations of $1/\lambda$ are classical \cite{BH05,BH26}; Lemma~\ref{lem:sinc} is included only for its explicit constants in the form \HR{}.

\begin{lemma}[Resolvent approximate inverse]
\label{lem:resolvent}
Let $s,T>0$.
The function $r(\lambda)=(1-\me^{-T\lambda})\,s/(1+s\lambda)$ satisfies $r(0)=0$ and \HR{} with $\Lambda=\infty$ and $\delta=\me^{-T}+1/(1+s)$, provided this quantity is at most $1$.
\end{lemma}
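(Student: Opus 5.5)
The plan is a direct verification of the two conditions in \HR{}, written in terms of $g(\lambda):=\lambda r(\lambda)=(1-\me^{-T\lambda})\,\frac{s\lambda}{1+s\lambda}$ for $\lambda\ge1$. First, $r(0)=0$ is immediate from the factor $1-\me^{-T\cdot0}=0$. Second, both factors of $g$ lie in $[0,1)$ for $\lambda\ge1$, since $s,T>0$. Hence $0\le g(\lambda)\le1\le1+\delta$, which is the second condition of \HR{} for every $\delta\ge0$.

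For the first condition, with $\Lambda=\infty$, we need $\abs{g(\lambda)-1}\le\delta$ on all of $[1,\infty)$. Put $\alpha=\me^{-T\lambda}$ and $\beta=1/(1+s\lambda)$, so that $g=(1-\alpha)(1-\beta)$ and
\[
    1-g=\alpha+\beta-\alpha\beta\le\alpha+\beta,
\]
because $\alpha,\beta\ge0$. Since $g\le1$, this gives $\abs{g-1}=1-g\le\alpha+\beta$. Both $\alpha$ and $\beta$ are decreasing in $\lambda$. So for $\lambda\ge1$ we get $\alpha\le\me^{-T}$ and $\beta\le1/(1+s)$, and therefore $\abs{\lambda r(\lambda)-1}\le\me^{-T}+1/(1+s)=\delta$.

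The hypothesis $\delta\le1$ is only needed because Definition~\ref{def:HR} requires $\delta\in(0,1]$. Positivity $\delta>0$ holds automatically. There is no genuine obstacle here. The one point to check is that the bound must hold uniformly on the unbounded interval $[1,\infty)$; monotonicity of both error terms in $\lambda$ gives this, so the worst case is $\lambda=1$.
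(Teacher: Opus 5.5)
Your proof is correct and is essentially the paper's argument. The paper bounds $\lambda r(\lambda)=(1-\me^{-T\lambda})(1-\frac1{1+s\lambda})$ below by its value at $\lambda=1$ and then uses $(1-x)(1-y)\ge1-x-y$. You expand $1-g=\alpha+\beta-\alpha\beta\le\alpha+\beta$ first and apply monotonicity afterwards, which is the same computation in the opposite order.
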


\begin{proof}
For $\lambda\ge1$,
\[
    \lambda r(\lambda)=(1-\me^{-T\lambda})\Bigl(1-\frac1{1+s\lambda}\Bigr)\in\Bigl[(1-\me^{-T})\Bigl(1-\frac1{1+s}\Bigr),1\Bigr),
\]
and $(1-\me^{-T})(1-\frac1{1+s})\ge1-\me^{-T}-\frac1{1+s}$.
\end{proof}

\subsection{The product defect of a heat layer}
\label{sec:gamma}

\begin{lemma}[Product defect of a heat layer]
\label{lem:product-defect}
Let $b\in A^0$, $u\in L^2$ and $t>0$.
Then, with $k=k'-\mu$,
\begin{gather*}
    \widehat{\Gamma_t(b,u)}_{k'}=-\sum_\mu\hat b_\mu\,(\mu\cdot k)\,F_t(k,\mu)\,\hat u_k,\\
    F_t(k,\mu)=\me^{-t(\abs{k}^2+\abs{\mu}^2)}\varphi(2t\,\mu\cdot k),\qquad
    \varphi(z)=\frac{1-\me^{-z}}{z},\ \varphi(0)=1,
\end{gather*}
and
\[
    0<F_t\le1,\qquad 0\le1-F_t\le\min\bigl\{1,\,2t(\abs{k}^2+\abs{\mu}^2)\bigr\}.
\]
\end{lemma}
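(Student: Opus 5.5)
The plan is a direct Fourier computation: both terms of $\Gamma_t(b,u)=\frac1{2t}[P_t(bu)-P_tb\,P_tu]$ are convolution sums in the indices $k'=k+\mu$, and I compare their multipliers term by term. Since $b\in A^0$ and $u\in L^2$, we have $bu\in L^2$ with $\widehat{(bu)}_{k'}=\sum_\mu\hat b_\mu\hat u_{k'-\mu}$. The sum converges absolutely, since $\sum_\mu\abs{\hat b_\mu}\abs{\hat u_{k'-\mu}}\le\norm{b}_{A^0}\norm{u}$. The same applies to $P_tb\,P_tu$, because $P_t$ only multiplies the Fourier coefficients by factors in $(0,1]$. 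Hence
\[
    \widehat{P_t(bu)}_{k'}=\sum_\mu\hat b_\mu\me^{-t\abs{k'}^2}\hat u_k,\qquad
    \widehat{(P_tb\,P_tu)}_{k'}=\sum_\mu\hat b_\mu\me^{-t(\abs{\mu}^2+\abs{k}^2)}\hat u_k .
\]
Using $\abs{k'}^2=\abs{k}^2+\abs{\mu}^2+2\mu\cdot k$, the difference of the multipliers is $\me^{-t(\abs{k}^2+\abs{\mu}^2)}(\me^{-2t\mu\cdot k}-1)$. With $z=2t\,\mu\cdot k$ we have $(\me^{-z}-1)/(2t)=-(\mu\cdot k)\varphi(z)$, including the case $z=0$. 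This gives the stated formula for $\widehat{\Gamma_t(b,u)}_{k'}$ with $F_t(k,\mu)=\me^{-t(\abs{k}^2+\abs{\mu}^2)}\varphi(2t\,\mu\cdot k)$.

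For the bounds on $F_t$, put $a:=t(\abs{k}^2+\abs{\mu}^2)\ge0$. The key observation is that $\abs{z}=2t\abs{\mu\cdot k}\le2t\abs{\mu}\abs{k}\le a$, so $z\in[-a,a]$. The function $\varphi$ is positive, which gives $F_t>0$. It is also decreasing on $\RR$, being the mean of $\me^{-sz}$ over $s\in[0,1]$. Hence $\varphi(a)\le\varphi(z)\le\varphi(-a)$.

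For the upper bound, $F_t\le\me^{-a}(\me^a-1)/a=(1-\me^{-a})/a\le1$ when $a>0$, and $F_t=1$ when $a=0$. For the lower bound, $F_t\ge\me^{-a}\varphi(a)$. Using $\me^{-a}\ge1-a$ and $\varphi(a)\ge1-a/2$ (from $1-\me^{-a}\ge a-a^2/2$), for $a\le1$ both factors are nonnegative and $F_t\ge(1-a)(1-a/2)\ge1-\tfrac32a$. Therefore $1-F_t\le2a=2t(\abs{k}^2+\abs{\mu}^2)$. For $a>1$ the bound $1-F_t\le2a$ is trivial, since $1-F_t<1$. Together with $0\le1-F_t<1$ this gives $0\le1-F_t\le\min\{1,2t(\abs{k}^2+\abs{\mu}^2)\}$.

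The only point requiring care is the reduction $\abs{z}\le a$, which allows the monotonicity of $\varphi$ to control $\varphi(z)$ by its values at $\pm a$. Everything else is routine bookkeeping.
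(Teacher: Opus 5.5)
Your proof is correct and follows the paper's: the same Fourier computation of the multiplier, the same key observation $\abs{z}\le t(\abs{k}^2+\abs{\mu}^2)$, and the same estimate $F_t\ge(1-a)(1-a/2)\ge1-\tfrac32a$ for $a\le1$. The only difference is cosmetic. You sandwich $\varphi(z)$ between $\varphi(a)$ and $\varphi(-a)$ using the monotonicity of $\varphi$, whereas the paper gets $F_t\le1$ from $\abs{\me^{-x}-\me^{-y}}\le\abs{x-y}$ and the lower bound from a case split into $z<0$ (where $\varphi(z)\ge1$) and $z\ge0$ (where $\varphi(z)\ge1-z/2$).
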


\begin{proof}
\emph{The multiplier.}
For $b\in A^0$ and $u\in L^2$ the Fourier coefficients of the products are given by absolutely convergent convolutions, and
\[
    \widehat{P_t(bu)}_{k'}=\sum_\mu\hat b_\mu\hat u_k\me^{-t\abs{k+\mu}^2},\qquad
    \widehat{(P_tb\,P_tu)}_{k'}=\sum_\mu\hat b_\mu\me^{-t\abs{\mu}^2}\hat u_k\me^{-t\abs{k}^2}.
\]
Since $\abs{k+\mu}^2=\abs{k}^2+\abs{\mu}^2+2\mu\cdot k$, the difference of the $\mu$-terms is
\[
    \hat b_\mu\hat u_k\me^{-t(\abs{k}^2+\abs{\mu}^2)}\bigl(\me^{-2t\mu\cdot k}-1\bigr)=-2t(\mu\cdot k)\hat b_\mu\hat u_kF_t(k,\mu);
\]
dividing by $2t$ gives the formula.
$F_t>0$ because $\varphi>0$.

\smallskip
\emph{Upper bound.}
$\abs{\me^{-x}-\me^{-y}}\le\abs{x-y}$ for $x,y\ge0$ gives
\[
    \abs{\me^{-t\abs{k+\mu}^2}-\me^{-t(\abs{k}^2+\abs{\mu}^2)}}\le2t\abs{\mu\cdot k},
\]
i.e.\ $F_t\le1$.

\smallskip
\emph{Lower bound.}
Put $A=t(\abs{k}^2+\abs{\mu}^2)$ and $z=2t\mu\cdot k$, so that $\abs{z}\le A$.
\emph{Case $z<0$.} Then $\varphi(z)\ge1$ and $F_t\ge\me^{-A}\ge1-A$.
\emph{Case $z\ge0$.} Then $\varphi(z)\ge1-z/2$ (from $\me^{-z}\le1-z+z^2/2$), so
\[
    F_t\ge\me^{-A}(1-z/2)\ge(1-A)(1-A/2)\ge1-\tfrac32A\qquad\text{when }A\le1.
\]
Hence $1-F_t\le2A$ when $A\le1$ (the proof gives $\frac32A$; the factor $2$ is used for round constants later), and $1-F_t<1$ always since $F_t>0$.
\end{proof}

\begin{remark}
\label{rem:bakry-emery}
Lemma~\ref{lem:product-defect} is the Fourier form on $\TT^d$ of the semigroup interpolation identity $P_t(uv)-P_tu\,P_tv=2\int_0^tP_s\Gamma(P_{t-s}u,P_{t-s}v)\dint s$ \cite{BGL14}, which is not new; its abstract form is Lemma~\ref{lem:interpolation}.
What is used below is the two-sided bound on $F_t$: the multiplier of $\Gamma_t-\Gamma$ is $(\mu\cdot k)(1-F_t)$, which is small for $t(\abs{k}^2+\abs{\mu}^2)\ll1$ and bounded by $\abs{\mu\cdot k}$ otherwise.
\end{remark}

\subsection{The preconditioned operator}
\label{sec:preconditioning}

For real $b\in A^1$ let $\Mcal_b$ be the operator with Fourier coefficients
\[
    (\Mcal_bu)^\wedge_{k'}=\sum_\mu\abs{\mu}^2\hat b_\mu\hat u_{k'-\mu},
\]
i.e.\ ``multiplication by $L_0b$''; if $b\in A^2$, then $\Mcal_bu=u\,L_0b$ with $L_0b\in L^\infty$.
We write $M_b$ for multiplication by $b$.

\begin{theorem}[Laplacian-preconditioned operator $=$ multiplication $+$ smoothing]
\label{thm:representation}
Let $b\in A^1$ be real.
Then, as an identity in $\Hd$,
\[
    K_b=\Pi M_b+R\,\Gamma(b,\cdot)-R\,\Mcal_b,
\]
with
\[
    \norm{K_b}_{\LL(\Hd)}\le\norm{b}_{L^\infty},\qquad
    \norm{R\Mcal_b}_{\LL(\Hd)}\le2\norm{b}_{A^1}.
\]
If $b\in A^2$, the remainder $K_b-\Pi M_b=R(\Gamma(b,\cdot)-\Mcal_b)$ maps $\Hd$ into $\Hdd$.
\end{theorem}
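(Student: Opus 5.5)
The plan is to prove the identity at the level of Fourier multipliers in the convolution form \eqref{eq:conv-form}, and then read off the bounds from Lemma~\ref{lem:bookkeeping} and the definition of $K_b$. The heuristic is the pair of Leibniz identities
\[
    \operatorname{div}(b\nabla u)=\nabla b\cdot\nabla u+b\Delta u,\qquad
    L_0(bu)=b\,L_0u+u\,L_0b-2\,\nabla b\cdot\nabla u .
\]
Eliminating $bL_0u$ between them gives
\[
    L_bu=L_0(bu)+\Gamma(b,u)-u\,L_0b .
\]
Applying $R=L_0^{-1}\Pi$ then yields the claimed formula. For $b\in A^1$ only, $L_0b$ need not be a function, which is why $\Mcal_b$ is defined through Fourier coefficients. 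So I will verify the identity coefficientwise rather than pointwise.

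\emph{Step 1: the identity.} The defining relation $\inner{K_bu,v}_{\Hd}=\int b\nabla u\cdot\overline{\nabla v}$ gives
\[
    (L_bu)^\wedge_{k'}=\sum_\mu\hat b_\mu\,(k'\cdot k)\,\hat u_k,\qquad k=k'-\mu .
\]
First take $u$ a mean-zero trigonometric polynomial. The right-hand side $L_0\Pi(bu)+\Gamma(b,u)-\Mcal_bu$ has $\mu$-th multiplier
\[
    \abs{k'}^2-\mu\cdot k-\abs{\mu}^2=\abs{k}^2+\mu\cdot k=k\cdot k'
\]
for $k'\neq0$, using $\widehat{\Gamma(b,u)}_{k'}=-\sum_\mu\hat b_\mu(\mu\cdot k)\hat u_k$. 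All sums converge absolutely since $b\in A^1$. Applying $R$, which multiplies by $\abs{k'}^{-2}$ for $k'\ne0$ and kills $k'=0$, gives $K_bu=\Pi M_bu+R\Gamma(b,u)-R\Mcal_bu$.

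To pass to general $u\in\Hd$, I use density of trigonometric polynomials together with boundedness of each term on $\Hd$:
\begin{itemize}
    \item $K_b$ is bounded, by Step 2 below;
    \item $R\Gamma(b,\cdot)$ is bounded, since $\norm{\Gamma(b,u)}\le\norm{b}_{A^1}\norm{u}_1$ and $\norm{Rg}_1\le\norm{g}$;
    \item $R\Mcal_b$ is bounded, by Step 2 below;
    \item $\Pi M_b$ is bounded, as the combination of the other three on trigonometric polynomials.
\end{itemize}
Alternatively, $\Pi M_b$ can be bounded directly via Lemma~\ref{lem:bookkeeping} with $c=1$, $m_\mu(k)=\abs{k+\mu}/\abs{k}\le1+\abs{\mu}$.

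\emph{Step 2: the norm bounds.} The bound $\norm{K_b}_{\LL(\Hd)}\le\norm{b}_{L^\infty}$ follows from Cauchy--Schwarz: $\abs{\inner{K_bu,v}_{\Hd}}\le\norm{b}_\infty\norm{u}_1\norm{v}_1$. For $R\Mcal_b$, the operator has the form \eqref{eq:conv-form} with $c(k',\mu)=\abs{\mu}^2/\abs{k'}^2$ for $k'\ne0$. Hence
\[
    m_\mu(k)=\frac{\abs{\mu}^2}{\abs{k'}\abs{k}}\le2\abs{\mu}
\]
by \eqref{eq:F1}, and Lemma~\ref{lem:bookkeeping} gives $\norm{R\Mcal_b}_{\LL(\Hd)}\le2\sum_\mu\abs{\mu}\abs{\hat b_\mu}\le2\norm{b}_{A^1}$.

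\emph{Step 3: smoothing.} Suppose $b\in A^2$. Then $\Mcal_bu=u\,L_0b$ with $\norm{L_0b}_\infty\le\norm{b}_{A^2}$, so $\Mcal_bu\in L^2$ by Poincar\'e. Also $\Gamma(b,u)\in L^2$. Since $R$ maps $L^2$ into $\Hdd$ (its symbol is $\abs{k}^{-2}$), the remainder $R(\Gamma(b,\cdot)-\Mcal_b)$ maps $\Hd$ into $\Hdd$.

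The only delicate point is Step 1: the identity must be justified when $b\in A^1$ and $u$ is merely in $\Hd$. Working with Fourier coefficients on trigonometric polynomials and extending by the bounds of Step 2 avoids any pointwise meaning for $u\,L_0b$.
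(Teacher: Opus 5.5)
Your proposal is correct and follows the paper's proof: the same coefficientwise identity $\abs{k'}^2-\mu\cdot k-\abs{\mu}^2=k\cdot k'$, the same Cauchy--Schwarz bound for $K_b$, and the same bookkeeping bound via \eqref{eq:F1} for $R\Mcal_b$. The differences are only technical. You verify the identity on trigonometric polynomials and extend it by continuity, where your direct bookkeeping bound for $\Pi M_b$ is the clean way to close the extension, whereas the paper checks absolute convergence of the three series for general $u\in\Hd$. You also obtain the $\Hdd$ statement from $R\colon L^2\to\Hdd$ rather than from a weighted bookkeeping estimate, with the same constant $\norm{b}_{A^1}+\norm{b}_{A^2}$.
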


\begin{proof}
\emph{The bound on $K_b$.}
It follows from $\abs{\inner{K_bu,v}_{\Hd}}=\abs{\int b\nabla u\cdot\overline{\nabla v}}\le\norm{b}_\infty\norm{u}_1\norm{v}_1$; it needs only $b\in L^\infty$.

\smallskip
\emph{The identity.}
Let $k'\ne0$ and test with $v=\me^{ik'\cdot x}$: $\inner{K_bu,v}_{\Hd}=\abs{k'}^2\widehat{K_bu}_{k'}$, while
\[
    \int b\nabla u\cdot\overline{\nabla v}=\sum_\mu\hat b_\mu(ik\hat u_k)\cdot\overline{(ik')}=\sum_\mu\hat b_\mu(k\cdot k')\hat u_k
\]
(convolution formula for $b\in A^0$ and $\nabla u\in L^2$; the series converges absolutely since $\abs{k\cdot k'}\le\abs{k'}(\abs{k'}+\abs{\mu})$ and $\abs{\hat u_k}\le\norm{u}$).
Hence
\[
    \widehat{K_bu}_{k'}=\sum_\mu\hat b_\mu\,\frac{k\cdot k'}{\abs{k'}^2}\,\hat u_k,\qquad k'\ne0.
\]
Coefficientwise, $\abs{k'}^2-\mu\cdot k-\abs{\mu}^2=\abs{k}^2+\mu\cdot k=k\cdot k'$, because $\abs{k'}^2=\abs{k}^2+2\mu\cdot k+\abs{\mu}^2$; dividing by $\abs{k'}^2$ gives the three terms $\Pi M_b$, $R\Gamma(b,\cdot)$ and $-R\Mcal_b$.

The splitting is legitimate because the three series converge absolutely for each $k'$: for $k\ne0$,
\[
    \abs{\mu}^2\abs{\hat u_k}=\frac{\abs{\mu}^2\abs{w_k}}{\abs{k}}\le\abs{\mu}\Bigl(\frac{\abs{k'}}{\abs{k}}+1\Bigr)\abs{w_k}\le\abs{\mu}(\abs{k'}+1)\norm{u}_1,
\]
so $\sum_\mu\abs{\mu}^2\abs{\hat b_\mu}\abs{\hat u_k}\le(\abs{k'}+1)\norm{b}_{A^1}\norm{u}_1$.
Each term lies in $\Hd$: for $R\Mcal_b$, Lemma~\ref{lem:bookkeeping} applies with $c=\abs{\mu}^2/\abs{k'}^2$ and $m_\mu(k)=\abs{\mu}^2/(\abs{k'}\abs{k})\le2\abs{\mu}$ by \eqref{eq:F1}, which gives $\norm{R\Mcal_bu}_1\le2\norm{b}_{A^1}\norm{u}_1$; the other two terms are handled in the same way.

\smallskip
\emph{The case $b\in A^2$.}
The remainder has $c(k',\mu)=-(\mu\cdot k+\abs{\mu}^2)/\abs{k'}^2$, and the analogue of \eqref{eq:bookkeeping} with $\abs{k'}^2$ in place of $\abs{k'}$ gives a bound by
\[
    \sum_\mu\abs{\hat b_\mu}\sup_k\frac{\abs{\mu}\abs{k}+\abs{\mu}^2}{\abs{k}}\le\norm{b}_{A^1}+\norm{b}_{A^2},
\]
so the remainder maps $\Hd$ into $\Hdd$.
\end{proof}

\begin{remark}[Attribution]
\label{rem:R-classical}
Theorem~\ref{thm:representation} is the classical structure in the form we need and is not claimed as new.
Iterative methods for nonseparable elliptic equations based on fast constant-coefficient solvers go back to Concus and Golub \cite{CG73}; for preconditioning by the inverse Laplacian, the condition number is controlled by the range of the coefficient \cite{NHT09}; the boundedness of $K_b$ on $\Hd$ is an instance of the theory of equivalent operators \cite{FMP90}.
In the terminology of Axelsson and Kar\'atson \cite{AK07,AK09} (see also \cite{AK04}), two operators are compact-equivalent if the preconditioned operator is a multiple of the identity plus a compact operator; $L_0$ and $L_a$ are equivalent but not compact-equivalent, since by Theorem~\ref{thm:representation} (for $b\in A^2$) $I+K_b=\bar a^{-1}L_0^{-1}L_a$ is a compact perturbation of the multiplication-type operator $\Pi M_{a/\bar a}$ on $\Hd$, not of a multiple of the identity; accordingly, the spectrum of $L_0^{-1}L_a$ contains the range of $a$ \cite{NHT09}, and the eigenvalues of its Galerkin discretisations are localised near the values of $a$ \cite{GMNS19}.
Only the explicit Fourier/$\Gamma$ form of the remainder is specific to this paper.
\end{remark}

Theorem~\ref{thm:representation} reduces the solution operator to a Neumann series.

\begin{lemma}
\label{lem:reduction}
Let $a\in\Acl$, $b=a/\bar a-1$, and $f\in L^2$ with $\int f=0$.
Then $\norm{K_b}_{\LL(\Hd)}\le q<1$,
\[
    G(a,f)=\bar a^{-1}(I+K_b)^{-1}Rf=\bar a^{-1}\sum_{j\ge0}(-K_b)^jRf\quad\text{in }\Hd,
\]
and $\norm{Rf}_1\le\norm{f}$.
\end{lemma}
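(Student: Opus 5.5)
The proof has three ingredients: the operator-norm bound on $K_b$ from Theorem~\ref{thm:representation}, a weak-form computation that identifies $\bar a^{-1}(I+K_b)^{-1}Rf$ as the Darcy solution, and the Neumann series.

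\emph{Step 1: the bound.} For $a\in\Acl$ we have $a_{\min}\le a\le a_{\max}$ a.e. Since $a_{\min}/\bar a-1=-q$ and $a_{\max}/\bar a-1=q$, it follows that $\norm{b}_\infty\le q$. The first estimate of Theorem~\ref{thm:representation} needs only $b\in L^\infty$, so $\norm{K_b}_{\LL(\Hd)}\le\norm{b}_\infty\le q<1$. The bound $\norm{Rf}_1\le\norm f$ is the symbol computation $\abs{k}^2\cdot\abs{k}^{-4}\le1$ for $k\ne0$, already recorded in Section~\ref{sec:setting}.

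\emph{Step 2: identification of the solution.} Write $a=\bar a(1+b)$. Then for $u,v\in\Hd$,
\[
    \int a\nabla u\cdot\overline{\nabla v}=\bar a\bigl(\inner{u,v}_{\Hd}+\inner{K_bu,v}_{\Hd}\bigr)=\bar a\inner{(I+K_b)u,v}_{\Hd},
\]
using the defining relation $\inner{K_bu,v}_{\Hd}=\int b\nabla u\cdot\overline{\nabla v}$. The right-hand side of the weak form is $\int f\bar v=\inner{Rf,v}_{\Hd}$. This holds because $f$ has mean zero, so $f=L_0Rf$ and integration by parts gives $\int f\bar v=\int\nabla Rf\cdot\overline{\nabla v}$; on Fourier coefficients this reads $\sum_{k\neq0}\hat f_k\overline{\hat v_k}=\sum_{k\ne0}\abs{k}^2\abs{k}^{-2}\hat f_k\overline{\hat v_k}$. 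Hence $u$ is a weak solution if and only if $\bar a(I+K_b)u=Rf$ in $\Hd$, by the Riesz representation in $\Hd$.

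\emph{Step 3: inversion.} Because $\norm{K_b}<1$, the operator $I+K_b$ is invertible on $\Hd$, and its inverse is the Neumann series $\sum_j(-K_b)^j$, which converges in operator norm. So $u=\bar a^{-1}(I+K_b)^{-1}Rf=\bar a^{-1}\sum_j(-K_b)^jRf$, with convergence in $\Hd$. Uniqueness of the mean-zero weak solution, which also follows from this invertibility, identifies $u$ with $G(a,f)$.

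There is no real obstacle here. The only point needing care is to write the weak form with real $a$ and complex test functions, and to check that the mean-zero normalisation makes $\int f\bar v=\inner{Rf,v}_{\Hd}$ valid for all $v\in\Hd$.
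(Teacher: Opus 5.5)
Your proposal is correct and follows the paper's proof essentially step for step. Both arguments rewrite the weak form as $\bar a(I+K_b)u=Rf$, bound $\norm{K_b}\le\norm{b}_\infty\le q$ via Theorem~\ref{thm:representation}, invert by the Neumann series, and read off $\norm{Rf}_1\le\norm{f}$ from the symbol. You additionally spell out why $\int f\bar v=\inner{Rf,v}_{\Hd}$ holds and why the solution is unique, which the paper leaves implicit.
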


\begin{proof}
For $u,v\in\Hd$, $\int a\nabla u\cdot\nabla v=\bar a\bigl(\inner{u,v}_{\Hd}+\inner{K_bu,v}_{\Hd}\bigr)$, so the weak formulation reads $\bar a(I+K_b)u=L_0^{-1}f=Rf$.
By Theorem~\ref{thm:representation}, $\norm{K_b}\le\norm{b}_\infty\le q$, and the Neumann series converges in $\LL(\Hd)$.
Finally $\norm{Rf}_1^2=\sum_{k\ne0}\abs{k}^{-2}\abs{\hat f_k}^2\le\norm{f}^2$.
\end{proof}

The truncated series $\bar a^{-1}\sum_{j\le N}(-K_b)^jRf$ is the $N$-th iterate of the Laplacian-preconditioned Richardson iteration with the classical midpoint normalisation $\bar a$, whose contraction factor is $q$.

\section{Proof of the torus theorem}
\label{sec:main}

The proof has two parts: an operator-norm estimate for one inexact preconditioned step (Theorem~\ref{thm:step}) and the error budget for the unrolled iteration.

\subsection{One preconditioned step in operator norm}
\label{subsec:step}
The following estimate is formulated for any approximate inverse satisfying \HR{}; the heat and resolvent realisations are its two instances.

\begin{theorem}[Inexact preconditioned step in operator norm, without Galerkin truncation]
\label{thm:step}
Let $\theta\in(0,1]$, $b\in A^{1+\theta}$ real, $t,\tau>0$, $T\ge\frac12$, and let $R_m=r(L_0)$ where $r(0)=0$ and $r$ satisfies \HR{} for some $\delta\in(0,1]$ and $\Lambda\in[1,\infty]$.
Define the block
\begin{equation}
    \label{eq:block}
    \Knet u:=(I-P_T)(bu)+R_m\bigl(\Gamma_t(b,u)-u\,b_\tau\bigr),\qquad b_\tau=(I-P_\tau)b/\tau.
\end{equation}
Then $\Knet\in\LL(\Hd)$ has mean-zero output and
\begin{equation}
    \label{eq:Aprime}
    \begin{multlined}
    \norm{\Knet-K_b}_{\LL(\Hd)}\le\norm{b}_{A^{1+\theta}}\Bigl[\me^{-T}+3\delta+12\Lambda^{-\theta/2}\\
    +(1+\delta)\bigl(4\sqrt t+(10t)^{\theta/2}\bigr)+2(1+\delta)(\tau/2)^{\theta/2}\Bigr],
    \end{multlined}
\end{equation}
with $\Lambda^{-\theta/2}:=0$ if $\Lambda=\infty$.
In particular, for every $B\ge1$ and $\eta_0\in(0,1]$ there are explicit $(t,\tau,T)$ and a choice of $r$ in either family such that $\norm{\Knet-K_b}_{\LL(\Hd)}\le\eta_0$ whenever $\norm{b}_{A^{1+\theta}}\le B$:
\begin{enumerate}[label=(\alph*)]
    \item \emph{heat family} ($\Fheat$): $r$ from Lemma~\ref{lem:sinc} with $\delta_0=\eta_0/(15B)$, $\Lambda=(60B/\eta_0)^{2/\theta}$, $T=\log(5/\delta_0)$, and
    \[
        t=\frac1{40}\Bigl(\frac{\eta_0}{20B}\Bigr)^{2/\theta},\qquad\tau=2\Bigl(\frac{\eta_0}{20B}\Bigr)^{2/\theta};
    \]
    the block uses $m+3=O(\theta^{-1}\log^2(B/\eta_0))$ heat applications (Definition~\ref{def:counting});
    \item \emph{mixed family} ($\Fmix$): $r$ from Lemma~\ref{lem:resolvent} with $\delta_0=\eta_0/(15B)$, $T=\log(2/\delta_0)$, $s=2/\delta_0$, and the same $t,\tau$; the block uses $3$ heat applications and one resolvent application.
\end{enumerate}
In both cases the same $T$ is used in $r$ and in the factor $I-P_T$ of \eqref{eq:block}, and the parameters depend only on $(\theta,B,\eta_0)$, not on $b$; the coefficient enters only as an input channel.
\end{theorem}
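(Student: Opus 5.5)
Subtract the representation of Theorem~\ref{thm:representation}, $K_b=\Pi M_b+R\,\Gamma(b,\cdot)-R\,\Mcal_b$, from the block \eqref{eq:block} and split the difference into four groups, each estimated in $\LL(\Hd)$ with Lemma~\ref{lem:bookkeeping} or Lemma~\ref{lem:HR}:
\[
\Knet-K_b=\underbrace{-P_T(bu)+(I-\Pi)(bu)}_{(\mathrm{I})}+\underbrace{(R_m-R)(\Gamma(b,u)-u\,L_0b)}_{(\mathrm{II})}+\underbrace{R_m(\Gamma_t-\Gamma)(b,u)}_{(\mathrm{III})}-\underbrace{R_m\bigl(u\,(b_\tau-L_0b)\bigr)}_{(\mathrm{IV})}.
\]
Since $(I-P_T)$ and $R_m$ annihilate constants, the output is mean-zero, so the $(I-\Pi)$ part vanishes; the mean-zero claim and boundedness follow once each term is bounded. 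For (I) the Fourier form \eqref{eq:conv-form} has $c=\me^{-T\abs{k'}^2}$ and $m_\mu(k)=\abs{k'}\me^{-T\abs{k'}^2}/\abs{k}$. Using $\abs{k'}\le\abs{k}+\abs{\mu}$ and $\abs{k}\ge1$, and absorbing $\sup_{\lambda\ge1}\sqrt\lambda\,\me^{-T\lambda}\le\me^{-T}$ for $T\ge\frac12$ (with the extra $\abs{\mu}$ paid by $A^{1+\theta}\supset A^1$), this gives the $\me^{-T}$ term. I have not verified whether the $\abs{\mu}$ factor is needed with the weight $\abs{k'}/\abs{k}$ or whether the plain $A^0$ norm suffices.

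Term (II) is the main obstacle, because $R_m-R$ is only relatively accurate on the band $[1,\Lambda]$, while $\Gamma(b,u)-u L_0 b$ need not lie in $L^2$ with good bounds when $b\in A^{1+\theta}$ only, so the crude bound $(\delta+2\Lambda^{-1/2})\norm{g}$ of Lemma~\ref{lem:HR} is not directly available. Instead I would work at the multiplier level. The combined symbol is $c=(r(\abs{k'}^2)-\abs{k'}^{-2})(-(\mu\cdot k)-\abs{\mu}^2)$, and Lemma~\ref{lem:HR} gives $\abs{c}\le\abs{k'}^{-2}(\delta+2\one_{\abs{k'}^2>\Lambda})(\abs{\mu}\abs{k'})$, using $\mu\cdot k+\abs{\mu}^2=\mu\cdot k'$. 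Hence $m_\mu(k)\le(\delta+2\one)\abs{\mu}/\abs{k}$. On the band part this yields $\delta\norm{b}_{A^1}$, up to the constant $3\delta$. Off the band, I split according to whether $\abs{\mu}\ge\abs{k'}/2$ or not. If $\abs{\mu}\ge\abs{k'}/2$, then $\abs{\mu}\ge\sqrt\Lambda/2$, and I trade $1\le(2\abs{\mu}/\sqrt\Lambda)^\theta$. If $\abs{\mu}<\abs{k'}/2$, then $\abs{k}\ge\abs{k'}/2\ge\sqrt\Lambda/2$, and $\abs{\mu}/\abs{k}\le\abs{\mu}\min\{1,2\abs{\mu}/\sqrt\Lambda\}^{\theta}$. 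Both cases give $C\Lambda^{-\theta/2}\norm{b}_{A^{1+\theta}}$ with $C\le12$.

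Term (III) uses Lemma~\ref{lem:product-defect}: its symbol is $(\mu\cdot k)(1-F_t)$ times $r(\abs{k'}^2)$, with $\abs{r}\le(1+\delta)\abs{k'}^{-2}$. Then $m_\mu(k)\le(1+\delta)\abs{\mu}\min\{1,2t(\abs{k}^2+\abs{\mu}^2)\}/\abs{k'}$. I would split by the size of $\abs{k}$ relative to $\abs{k'}$ in the same way as for (II). The $\abs{\mu}^2$ part contributes $\abs{\mu}\min\{1,2t\abs{\mu}^2\}^{\theta/2}$-type factors, giving the $(10t)^{\theta/2}$ term. The $t\abs{k}^2/\abs{k'}$ part is controlled by $\min\{1,2t\abs{k}^2\}\abs{k}/\abs{k'}$, which is at most a multiple of $\sqrt t\,\abs{k}$ bounded via $\abs{k}\le\abs{k'}+\abs{\mu}$, giving the $4\sqrt t$ term.

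Term (IV) is multiplication by $b_\tau-L_0b$, whose Fourier coefficients are $\hat b_\mu\abs{\mu}^2\bigl(\varphi(\tau\abs{\mu}^2)-1\bigr)$, with $\abs{\varphi(z)-1}\le\min\{1,z/2\}\le(z/2)^{\theta/2}$. Composing with $R_m$ and using \eqref{eq:F1} gives $m_\mu\le(1+\delta)\abs{\mu}^2/(\abs{k'}\abs{k})\cdot(\tau\abs{\mu}^2/2)^{\theta/2}\le2(1+\delta)(\tau/2)^{\theta/2}\abs{\mu}^{1+\theta}$.

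Summing the four bounds gives \eqref{eq:Aprime}. The parameter choices are then checked by substitution: each bracket term is at most about $\eta_0/(5B)$, using $\delta\le1$, \HR{} from Lemmas~\ref{lem:sinc} and~\ref{lem:resolvent} (for the resolvent, $\me^{-T}+1/(1+s)\le\delta_0$), and $T\ge\frac12$. The operation counts follow directly from the form of the block.
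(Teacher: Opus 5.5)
Your proposal is correct and uses the same machinery as the paper: the explicit symbols of Lemma~\ref{lem:product-defect} and Theorem~\ref{thm:representation}, bounded term by term with Lemma~\ref{lem:bookkeeping}. It organises the error differently, though.

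\textbf{How the paper proceeds.} It first reduces to trigonometric polynomials $b$ by continuity in $\norm{b}_{A^{1+\theta}}$, so that $\Mcal_b=M_{L_0b}$ literally. It then uses five terms and bounds the inverse error on $\Gamma(b,\cdot)$ and on $M_{L_0b}$ separately. The first costs $(\delta+2\Lambda^{-1/2})\norm{b}_{A^1}$. The second, via a band/tail split with sub-cases $\abs{k}$ versus $\abs{k'}/2$, costs $(2\delta+10\Lambda^{-\theta/2})\norm{b}_{A^{1+\theta}}$. This is the origin of $3\delta+12\Lambda^{-\theta/2}$ in \eqref{eq:Aprime}.

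\textbf{What your route buys.} You merge the two pieces via $\mu\cdot k+\abs{\mu}^2=\mu\cdot k'$. The combined symbol is then $-(r(\abs{k'}^2)-\abs{k'}^{-2})\,\mu\cdot k'$, with $m_\mu(k)\le(\delta+2\cdot\one_{\abs{k'}>\sqrt\Lambda})\abs{\mu}/\abs{k}$. Your tail split then gives $(\delta+4\Lambda^{-\theta/2})\norm{b}_{A^{1+\theta}}$, which is sharper. Working directly with symbols also makes the density step unnecessary, since every convolution series converges absolutely for $b\in A^1$ (cf.\ the proofs of Lemma~\ref{lem:product-defect} and Theorem~\ref{thm:representation}). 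The paper's separate treatment mirrors Theorem~\ref{thm:dirichlet-step}, where the two parts are controlled by $\norm{\sqrt{\Gamma(b)}}_\infty$ and $\norm{L_0b}_\infty$ respectively.

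\textbf{Term (I).} No factor $\abs{\mu}$ is needed. The term is $(I-P_T-\Pi)M_b=-P_T\Pi M_b$: the mean of $bu$ does not vanish, but it is cancelled by the mean of $P_T(bu)$. Then $m_\mu(k)=\abs{k'}\me^{-T\abs{k'}^2}/\abs{k}\le\me^{-T}$, because $\abs{k},\abs{k'}\ge1$ and $T\ge\frac12$. So $\norm{b}_{A^0}$ suffices.

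\textbf{Term (III) needs tightening.} The estimate through $\abs{k}\le\abs{k'}+\abs{\mu}$ is usable only where $\abs{\mu}$ is small relative to $\abs{k'}$. Where $\abs{\mu}\gtrsim\abs{k'}$, the ratio $\abs{k}/\abs{k'}$ can be of order $\abs{\mu}$. Bounding the $2t\abs{k}^2$ piece there by $\sqrt{2t}\,\abs{k}/\abs{k'}$ would produce $\sqrt t\,\abs{\mu}^2$, i.e.\ it would require $b\in A^2$.

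\begin{itemize}
\item In that regime you must bound the whole factor $\min\{1,2t(\abs{k}^2+\abs{\mu}^2)\}$ by $\min\{1,Ct\abs{\mu}^2\}\le(Ct)^{\theta/2}\abs{\mu}^\theta$, using $\abs{k}\lesssim\abs{\mu}$. This is what the paper does in its case $\abs{k}<2\abs{\mu}$.
\item The threshold also fixes the constants. Splitting at $\abs{\mu}\ge\abs{k'}/2$, as in your (II), gives only $\abs{k}\le3\abs{\mu}$ and hence $(20t)^{\theta/2}$.
\item Splitting at $\abs{\mu}\ge\abs{k'}$ gives $\abs{k}^2+\abs{\mu}^2\le5\abs{\mu}^2$ on one side and $\abs{k}^2+\abs{\mu}^2<5\abs{k'}^2$ on the other. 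That yields exactly $(10t)^{\theta/2}$ and $\sqrt{10t}\le4\sqrt t$, as in \eqref{eq:Aprime}, whose constants the parameter recipes are tuned to.
\end{itemize}
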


The structure of \eqref{eq:block} mirrors Theorem~\ref{thm:representation}: $(I-P_T)$ replaces $\Pi$, $\Gamma_t$ replaces $\Gamma$, $b_\tau$ replaces $L_0b$, and $R_m$ replaces $R$.
The point is that the pointwise part $bu$ is applied exactly, so $R_m$ acts only on the smoothing part, and there the band-limitation of $R_m$ costs only $\Lambda^{-\theta/2}$.

\begin{proof}
\emph{Boundedness and mean zero.}
We have
\[
    \norm{\nabla(bu)}\le\norm{b}_{A^0}\norm{u}_1+\norm{b}_{A^1}\norm{u};
\]
$\Gamma_t(b,\cdot)$ is bounded on $L^2$ with norm at most $\norm{b}_{A^0}/t$;
\[
    \norm{b_\tau}_{A^0}\le\sum_\mu\abs{\hat b_\mu}\min\{\abs{\mu}^2,\tau^{-1}\}\le\tau^{-(1-\theta)/2}\norm{b}_{A^{1+\theta}};
\]
and $R_m\colon L^2\to\Hd$ is bounded by \eqref{eq:HR-consequences}.
Both $I-P_T$ and $R_m$ annihilate constants (their symbols vanish at $0$), so $\Knet$ has mean-zero output.
This is needed because $P_tb\,P_tu$ and $u\,b_\tau$ have non-zero mean in general.

\smallskip
\emph{Reduction to trigonometric polynomials.}
By Theorem~\ref{thm:representation}, $\norm{K_b-K_{b'}}\le\norm{b-b'}_{A^0}$, and the bounds just listed are linear in $\norm{b-b'}_{A^{1+\theta}}$; hence $K_b$ and $\Knet$ depend continuously on $b$ in $\LL(\Hd)$ with respect to $\norm{\cdot}_{A^{1+\theta}}$.
The Fourier truncations $b_n$ of $b$ to $\abs{\mu}\le n$ satisfy $b_n\to b$ in $A^{1+\theta}$ and $\norm{b_n}_{A^{1+\theta}}\le\norm{b}_{A^{1+\theta}}$.
It therefore suffices to prove \eqref{eq:Aprime} for trigonometric polynomials $b$, for which $b\in A^2$, $\Mcal_bu=u\,L_0b$, i.e.\ $\Mcal_b=M_{L_0b}$, and Theorem~\ref{thm:representation} gives
\begin{multline*}
    \Knet-K_b=\underbrace{(I-P_T-\Pi)M_b}_{(1)}+\underbrace{(R_m-R)\Gamma(b,\cdot)}_{(2)}+\underbrace{R_m(\Gamma_t-\Gamma)(b,\cdot)}_{(3)}\\
    -\underbrace{(R_m-R)M_{L_0b}}_{(4)}-\underbrace{R_mM_{b_\tau-L_0b}}_{(5)}.
\end{multline*}
Each term has the form \eqref{eq:conv-form}, and we bound $m_\mu(k)$ in Lemma~\ref{lem:bookkeeping}; throughout $k'=k+\mu$ and $\abs{k},\abs{k'}\ge1$.

\smallskip
\noindent\textbf{Term (1): projection term,} $(I-P_T-\Pi)M_b$.
Here $c=-\me^{-T\abs{k'}^2}$. Since $\abs{k'}\ge1$, $T\ge\frac12$ and $\abs{k}\ge1$,
\begin{gather*}
    m_\mu(k)=\abs{k'}\me^{-T\abs{k'}^2}/\abs{k}\le\me^{-T}.\\
    \text{Contribution}\le\me^{-T}\norm{b}_{A^0}.
\end{gather*}

\smallskip
\noindent\textbf{Term (2): inverse error, $\Gamma$ part,} $(R_m-R)\Gamma(b,\cdot)$.
Here $c=(r(\abs{k'}^2)-\abs{k'}^{-2})(-\mu\cdot k)$. By Lemma~\ref{lem:HR},
\[
    \abs{c}\le\abs{k'}^{-2}(\delta+2\cdot\one_{\abs{k'}>\sqrt\Lambda})\abs{\mu}\abs{k},
\]
so
\begin{gather*}
    m_\mu(k)\le\frac{\abs{\mu}\,(\delta+2\cdot\one_{\abs{k'}>\sqrt\Lambda})}{\abs{k'}}\le\abs{\mu}(\delta+2\Lambda^{-1/2}).\\
    \text{Contribution}\le(\delta+2\Lambda^{-1/2})\norm{b}_{A^1}.
\end{gather*}

\smallskip
\noindent\textbf{Term (3): product-defect term,} $R_m(\Gamma_t-\Gamma)(b,\cdot)$.
By Lemma~\ref{lem:product-defect},
\[
    c=r(\abs{k'}^2)(\mu\cdot k)(1-F_t),
    \qquad
    \abs{c}\le(1+\delta)\abs{k'}^{-2}\abs{\mu}\abs{k}\min\{1,2t(\abs{k}^2+\abs{\mu}^2)\},
\]
so
\[
    m_\mu(k)\le(1+\delta)\abs{\mu}\,c_t(k,\mu)
    \qquad\text{with}\qquad
    c_t:=\min\{1,2t(\abs{k}^2+\abs{\mu}^2)\}/\abs{k'}.
\]

\emph{Case $\abs{k}\ge2\abs{\mu}$.}
Then
\[
    \abs{k'}\ge\abs{k}/2,\qquad\abs{k}^2+\abs{\mu}^2\le2\abs{k}^2,
\]
so
\[
    c_t\le2\min\{\abs{k}^{-1},4t\abs{k}\}\le2\sup_{\rho>0}\min\{\rho^{-1},4t\rho\}=4\sqrt t.
\]

\emph{Case $\abs{k}<2\abs{\mu}$.}
Then
\[
    \abs{k'}\ge1,\qquad\abs{k}^2+\abs{\mu}^2<5\abs{\mu}^2,
\]
so
\[
    c_t\le\min\{1,10t\abs{\mu}^2\}\le(10t)^{\theta/2}\abs{\mu}^\theta.
\]

\emph{Both cases.}
\[
    \text{Contribution}\le(1+\delta)\bigl(4\sqrt t\,\norm{b}_{A^1}+(10t)^{\theta/2}\norm{b}_{A^{1+\theta}}\bigr).
\]

\smallskip
\noindent\textbf{Term (4): inverse error, $L_0b$ part,} $(R_m-R)M_{L_0b}$.
Here $c=(r(\abs{k'}^2)-\abs{k'}^{-2})\abs{\mu}^2$ and, by \eqref{eq:F1},
\[
    m_\mu(k)\le(\delta+2\cdot\one_{\abs{k'}>\sqrt\Lambda})\,g
    \qquad\text{with}\qquad
    g:=\frac{\abs{\mu}^2}{\abs{k'}\abs{k}}\le2\abs{\mu}.
\]

\emph{Band part $\abs{k'}\le\sqrt\Lambda$.}
\[
    m_\mu(k)\le2\delta\abs{\mu}.
\]

\emph{Tail part $\abs{k'}>\sqrt\Lambda$.}
We have
\[
    2g\le2\abs{\mu}/\abs{k'}+2\abs{\mu}/\abs{k},
    \qquad
    2\abs{\mu}/\abs{k'}\le2\abs{\mu}\Lambda^{-1/2}.
\]

\emph{Sub-case $\abs{k}\ge\abs{k'}/2$.}
\[
    2\abs{\mu}/\abs{k}\le4\abs{\mu}\Lambda^{-1/2}.
\]

\emph{Sub-case $\abs{k}<\abs{k'}/2$.}
Then $\abs{\mu}\ge\abs{k'}-\abs{k}>\sqrt\Lambda/2$, so
\[
    \frac{2\abs{\mu}}{\abs{k}}\le2\abs{\mu}=2\abs{\mu}^{1+\theta}\abs{\mu}^{-\theta}\le2^{1+\theta}\abs{\mu}^{1+\theta}\Lambda^{-\theta/2}\le4\abs{\mu}^{1+\theta}\Lambda^{-\theta/2}.
\]

Since $\Lambda^{-1/2}\le\Lambda^{-\theta/2}$ and $\abs{\mu}\le\abs{\mu}^{1+\theta}$ for $\abs{\mu}\ge1$ (and $\mu=0$ contributes nothing), the tail is bounded by the larger of the two sub-cases:
\[
    m_\mu(k)\le2\delta\abs{\mu}+6\abs{\mu}^{1+\theta}\Lambda^{-\theta/2}\le2\delta\abs{\mu}+10\abs{\mu}^{1+\theta}\Lambda^{-\theta/2};
\]
the constants of \eqref{eq:Aprime} are kept because the parameter recipes below are tuned to them.
\[
    \text{Contribution}\le(2\delta+10\Lambda^{-\theta/2})\norm{b}_{A^{1+\theta}}.
\]

\smallskip
\noindent\textbf{Term (5): channel $b_\tau$,} $R_mM_{b_\tau-L_0b}$.
Here $c=r(\abs{k'}^2)e_\mu$ with
\[
    e_\mu:=\tau^{-1}(1-\me^{-\tau\abs{\mu}^2})-\abs{\mu}^2\in[-\abs{\mu}^2,0].
\]
With $x=\tau\abs{\mu}^2$,
\[
    \abs{e_\mu}=\tau^{-1}\abs{1-\me^{-x}-x}\le x^2/(2\tau),
    \qquad\text{thus}\qquad
    \abs{e_\mu}\le\abs{\mu}^2\min\{1,\tau\abs{\mu}^2/2\}.
\]
Using \eqref{eq:F1} and $\min\{1,x\}\le x^{\theta/2}$,
\begin{gather*}
    m_\mu(k)\le(1+\delta)\,g\,\min\{1,\tau\abs{\mu}^2/2\}\le2(1+\delta)(\tau/2)^{\theta/2}\abs{\mu}^{1+\theta}.\\
    \text{Contribution}\le2(1+\delta)(\tau/2)^{\theta/2}\norm{b}_{A^{1+\theta}}.
\end{gather*}

\smallskip
\noindent\textbf{Sum.}
Adding (1)--(5) and using $\norm{b}_{A^\sigma}\le\norm{b}_{A^{1+\theta}}$ for $\sigma\le1+\theta$ and $2\Lambda^{-1/2}\le2\Lambda^{-\theta/2}$ gives \eqref{eq:Aprime}.
The argument uses $\Lambda<\infty$ only through the tail terms, which vanish when $\Lambda=\infty$.

\smallskip
\emph{Heat family.}
By Lemma~\ref{lem:sinc}, $\delta\le\delta_0\le1/15$, so $1+\delta\le2$, and $T\ge\log75>\frac12$.
In \eqref{eq:Aprime}:
\begin{gather*}
    \me^{-T}=\delta_0/5=\eta_0/(75B);\qquad 3\delta\le\eta_0/(5B);\qquad 12\Lambda^{-\theta/2}=\eta_0/(5B);\\
    2\bigl(4\sqrt t+(10t)^{\theta/2}\bigr)=2\Bigl(\frac4{\sqrt{40}}\Bigl(\frac{\eta_0}{20B}\Bigr)^{1/\theta}+4^{-\theta/2}\frac{\eta_0}{20B}\Bigr)\le2(0.64+1)\frac{\eta_0}{20B}\le\frac{\eta_0}{5B},
\end{gather*}
using $(\eta_0/20B)^{1/\theta}\le\eta_0/(20B)$; and
\[
    4(\tau/2)^{\theta/2}=4\eta_0/(20B)=\eta_0/(5B).
\]
The bracket is at most $\eta_0/B$, and $\norm{b}_{A^{1+\theta}}\le B$.
The count of heat applications follows from Lemma~\ref{lem:sinc} with $\log(\Lambda/\delta_0)=O(\theta^{-1}\log(B/\eta_0))$.

\smallskip
\emph{Mixed family.}
By Lemma~\ref{lem:resolvent}, $\delta\le\me^{-T}+1/(1+s)\le\delta_0/2+\delta_0/2=\delta_0$, and $T=\log(30B/\eta_0)>\frac12$.
The bracket in \eqref{eq:Aprime} is at most
\[
    \frac{\delta_0}2+3\delta_0+\frac{\eta_0}{5B}+\frac{\eta_0}{5B}\le\frac{\eta_0}{30B}+\frac{\eta_0}{5B}+\frac{2\eta_0}{5B}\le\frac{\eta_0}{B},
\]
where the $t$- and $\tau$-terms are estimated as in the heat family.
\end{proof}

\begin{remark}[Where regularity enters]
\label{rem:step-new}
$b\in A^0$ suffices for term (1); $A^1$ is used in (2), (3) and in Theorem~\ref{thm:representation}; the extra $\theta$ derivatives are used only in the high-frequency-of-$b$ regimes $\abs{k}<2\abs{\mu}$ in (3) and $\abs{k}<\abs{k'}/2$ in (4), and in the channel $b_\tau$ in (5).
\end{remark}

\begin{remark}[Sharpness of the $t$-rate]
\label{rem:sharpness}
The rate $t^{\theta/2}$ of term (3) cannot in general be improved over the $A^{1+\theta}$ ball.
Take $d=1$ (or one coordinate direction of $\TT^d$), $b=2\cos(\mu_0x)$, so that $\norm{b}_{A^{1+\theta}}=2(1+\mu_0)^{1+\theta}$, $u=\me^{ikx}$ with $k=1-\mu_0$, and $t=\mu_0^{-2}$.
Lemma~\ref{lem:product-defect} gives the output of $R(\Gamma_t-\Gamma)(b,\cdot)$ explicitly; evaluating it yields
\[
    \norm{R(\Gamma_t-\Gamma)(b,\cdot)}_{\LL(\Hd)}\ge c\,\norm{b}_{A^{1+\theta}}\,t^{\theta/2}
\]
with $c$ between $0.22$ and $0.29$ for $\theta\in\{1,\frac12,\frac14\}$ and $\mu_0\in\{10,10^2,10^3,10^4\}$ ($c\to(1+\me^{-2})/4\approx0.2838$ as $\mu_0\to\infty$), while the corresponding term of \eqref{eq:Aprime} exceeds this lower bound by a factor between $4.7$ and $15.7$ (between $11$ and $16$ for $\theta=1$).
Hence, for a $b$-independent $t$ of this form, accuracy $\eta_0$ of term (3) uniformly over the ball of radius $B$ requires $t^{\theta/2}\le\eta_0/(cB)$, and the exponent $1/\theta$ in $\Kreq$ (Proposition~\ref{prop:counts}) is structural for this block under the present regularity.
Whether other schemes built from the same primitives need less resolution is open.
\end{remark}

\subsection{Proof of Theorem~\ref{thm:main}}
\label{subsec:proof-main}

\begin{remark}[One iteration]
\label{rem:schematic}
By linearity, $\Knet v=(I-P_T)\bigl[bv+S(\Gamma_t(b,v)-v\,b_\tau)\bigr]$ with $R_m=(I-P_T)S$, where $S=h\sum_j\me^{jh}P_{\me^{jh}}$ (heat family) or $S=sJ_s$ (mixed family).
After the precomputation of $b=a/\bar a-1$, $P_tb$, $b_\tau=(b-P_\tau b)/\tau$ and $v_0=(I-P_T)Sf$, block $i$ forms the products $w=bv_i$, $P_tb\cdot P_tv_i$ and $v_ib_\tau$, sets $g=\frac1{2t}[P_tw-P_tb\,P_tv_i]-v_ib_\tau$ and returns $v_{i+1}=v_0-(I-P_T)[w+Sg]$; all blocks share their parameters.
\end{remark}

\begin{proof}[Proof of Theorem~\ref{thm:main}]
\emph{Admissibility.}
Since $\eta_0\le(1-q)/2\le\frac12$, Theorem~\ref{thm:step} applies; moreover $\norm{b}_{A^{1+\theta}}\le B$ and $\norm{b}_\infty\le q$ for $a\in\Acl$.

\smallskip
\emph{Mean-zero iterates.}
$v_0=R_mf$ has mean zero because $r(0)=0$, and $\Knet$ has mean-zero output by Theorem~\ref{thm:step}; hence all $v_i\in\Hd$ and the $\LL(\Hd)$ bounds apply.

\smallskip
\emph{Error decomposition.}
By Lemma~\ref{lem:reduction} and the recursion,
\[
    G(a,f)=\bar a^{-1}\sum_{j\ge0}(-K_b)^jRf,\qquad N_\varepsilon(a,f)=\bar a^{-1}\sum_{j=0}^N(-\Knet)^jR_mf.
\]
Put $q':=q+\eta_0\le\frac{1+q}2$, so that $\norm{\Knet}\le q'$ and $1-q'\ge(1-q)/2$.
Then
\begin{multline*}
    \bar a\,\norm{G-N_\varepsilon}_1\le\Bigl\|\sum_{j>N}(-K_b)^jRf\Bigr\|_1+\Bigl\|\sum_{j\le N}\bigl[(-K_b)^j-(-\Knet)^j\bigr]Rf\Bigr\|_1\\
    +\Bigl\|\sum_{j\le N}(-\Knet)^j(R-R_m)f\Bigr\|_1.
\end{multline*}
The first term is at most $q^{N+1}(1-q)^{-1}\norm{f}$.
For the second, telescoping gives $\norm{K_b^j-(\Knet)^j}\le j\eta_0q'^{\,j-1}$, so it is at most
\[
    \eta_0(1-q')^{-2}\norm{f}\le4\eta_0(1-q)^{-2}\norm{f}.
\]
For the third, \eqref{eq:HR-consequences} and $\sum_jq'^{\,j}\le(1-q')^{-1}$ give the bound $(\delta+2\Lambda^{-1/2})(1-q')^{-1}\norm{f}$.

\smallskip
\emph{Budget.}
Let $\norm{f}\le1$.
By the choice of $N$, $q^N\le\varepsilon\bar a(1-q)/4$, so the first term is at most $\varepsilon\bar a/4$; for $N=0$ (and $q>0$) the definition of $N$ gives $\varepsilon\bar a(1-q)\ge4$, so $q/(1-q)\le\varepsilon\bar a/4$; for $q=0$ the first term vanishes.

The second term is at most $\varepsilon\bar a/2$ because $\eta_0\le\varepsilon\bar a(1-q)^2/8$.

For the third term, in the heat family $\delta\le\eta_0/(15B)$ and $2\Lambda^{-1/2}=2(\eta_0/60B)^{1/\theta}\le\eta_0/(30B)$; in the mixed family $\delta\le\delta_0=\eta_0/(15B)$ and there is no tail.
In both cases
\[
    \frac{\delta+2\Lambda^{-1/2}}{1-q'}\le\frac2{1-q}\cdot\frac{\eta_0}{10B}\le\frac{2\eta_0}{1-q}\le\frac{\varepsilon\bar a(1-q)}4\le\frac{\varepsilon\bar a}4.
\]
The total is at most $\bar a\varepsilon$.

\smallskip
\emph{Dimension.}
No inequality in Lemmas~\ref{lem:bookkeeping}--\ref{lem:reduction} or Theorems~\ref{thm:representation}--\ref{thm:step} involves $d$: the only lattice facts used are $\abs{k}\ge1$ for $k\ne0$ and $\abs{\mu}\le\abs{k}+\abs{k'}$.
If the class is described by $\norm{b}_{H^\sigma}^2:=\sum_\mu(1+\abs{\mu})^{2\sigma}\abs{\hat b_\mu}^2$, then $\norm{b}_{A^{1+\theta}}\le C(d,\sigma)\norm{b}_{H^\sigma}$ for $\sigma>1+\theta+d/2$, with $C(d,\sigma)^2=\sum_{\mu\in\ZZ^d}(1+\abs{\mu})^{-2(\sigma-1-\theta)}$, and $d$ enters through $B$.
\end{proof}

\section{Realisations and complexity}
\label{sec:realisations}

\subsection{Counts and magnitudes}

\begin{definition}[Counting convention]
\label{def:counting}
One \emph{heat application} is one evaluation of one $P_t$, $t>0$, on one channel; one \emph{resolvent application} is one evaluation of $J_s$ on one channel.
By linearity, $(I-P_T)$ is applied once per block to the merged channel $bv+S(\Gamma_t(b,v)-v\,b_\tau)$ (Remark~\ref{rem:schematic}); the identity costs nothing; and the shared evaluations $P_tb$, $P_\tau b$ are counted once.
\end{definition}

\begin{proposition}[Counts and magnitudes]
\label{prop:counts}
Let $N_\varepsilon$ be the scheme of Theorem~\ref{thm:main} in the heat family, and put $L_\varepsilon:=\log(60B/\eta_0)$.
\begin{enumerate}[label=(\alph*)]
    \item \emph{Depth:} $N\le L_\varepsilon/\log(1/q)+1$ ($N=0$ if $q=0$); for $q>0$, $N\sim\log(1/\varepsilon)/\log(1/q)$ as $\varepsilon\to0$.
    \item \emph{Exponential-sum times:}
    \begin{gather*}
        m\le\Bigl(1+\frac2{\pi^2}L_\varepsilon\Bigr)\Bigl[\Bigl(1+\frac2\theta\Bigr)L_\varepsilon+\log\Bigl(2\log\frac8{h\delta_0}\Bigr)+\log8\Bigr]+1,\\
        \text{i.e. } m\sim\frac2{\pi^2}\Bigl(1+\frac2\theta\Bigr)\log^2\frac1\varepsilon.
    \end{gather*}
    \item \emph{Applications} (Definition~\ref{def:counting}): a block costs $m+3$ heat applications ($2$ at time $t$, $m$ exponential-sum times, $1$ at time $T$), and the precomputation ($P_tb$, $P_\tau b$, $v_0=R_mf$) costs $m+3$; the total is $(N+1)(m+3)=O(\theta^{-1}\log^3(1/\varepsilon)/\log(1/q))$.
    The number of pointwise products is $3N$.
    \item \emph{Theorem-prescribed reals:} the continuum construction is determined by $2m+4$ reals shared across blocks (the $m$ sinc weights, the $m$ sinc times, and $t,\tau,T,\bar a$), all of them functions of $(\theta,B,a_{\min},a_{\max},\varepsilon)$ through $(h,M,\Nsinc,T,t,\tau,\bar a)$.
    \item \emph{Magnitudes:} $R_m$ is relatively accurate on the band $\abs{k}\le\sqrt\Lambda=(60B/\eta_0)^{1/\theta}$; the resolution scale of the construction is
    \[
        \Kreq:=\max\{\sqrt\Lambda,\,t^{-1/2},\,\tau^{-1/2}\}=\max\Bigl\{\Bigl(\frac{60B}{\eta_0}\Bigr)^{1/\theta},\ \sqrt{40}\Bigl(\frac{20B}{\eta_0}\Bigr)^{1/\theta}\Bigr\},
    \]
    where $\tau^{-1/2}=t^{-1/2}/\sqrt{80}$ is never the maximum,
    \[
        \frac{t^{-1/2}}{\sqrt\Lambda}=\sqrt{40}\,3^{-1/\theta}>1
        \quad\text{if and only if}\quad
        \theta>\frac{\log3}{\log\sqrt{40}}\approx0.596,
    \]
    so that $\Kreq=t^{-1/2}$ for $\theta>0.596$ (in particular for $\theta\in[0.6,1]$) and $\Kreq=\sqrt\Lambda$ for $\theta<0.596$, and in all cases
    \[
        \Bigl(\frac{60B}{\eta_0}\Bigr)^{1/\theta}\le\Kreq\le\sqrt{40}\Bigl(\frac{60B}{\eta_0}\Bigr)^{1/\theta}.
    \]
    The smallest heat time is $\me^{-Mh}\in(\delta_0/(8\Lambda),\,\me\,\delta_0/(8\Lambda)]$, so $\me^{-Mh}\asymp\delta_0/\Lambda\asymp(\eta_0/B)^{1+2/\theta}$; the largest multiplier is $1/(2t)=20(20B/\eta_0)^{2/\theta}$.
    With $\eta_0=\varepsilon\bar a(1-q)^2/8$ (the generic case) and $1-q=2/(\kappa+1)$, these quantities are polynomial in $B(\kappa+1)^2/(\varepsilon\bar a)$.
\end{enumerate}
In the mixed family: the depth is $N$; there are $3N+3$ heat applications and $N+1$ resolvent applications; five theorem-prescribed reals; the magnitudes are $s=30B/\eta_0\asymp B(\kappa+1)^2/(\varepsilon\bar a)$ and the same $t,\tau$; and the resolution scale is set by the heat layers, not by $s$:
\[
    \Kreq=\max\{t^{-1/2},\tau^{-1/2}\}=t^{-1/2}=\sqrt{40}\Bigl(\frac{20B}{\eta_0}\Bigr)^{1/\theta}.
\]
The resolvent satisfies \HR{} with $\Lambda=\infty$ and has no band, but the accuracy of the construction still requires the heat layers to act correctly up to $\abs{k}\asymp t^{-1/2}$.
This coincides with the heat-family $\Kreq$ for $\theta\ge0.6$ and is smaller only by the constant factor $3^{1/\theta}/\sqrt{40}$ for $\theta<0.596$; both families have $\Kreq\asymp(B\kappa^2/(\varepsilon\bar a))^{1/\theta}$.
\end{proposition}

\begin{proof}
\noindent\textbf{(a)}
Since $\eta_0\le\varepsilon\bar a(1-q)^2/8$, $60B/\eta_0\ge480B/(\varepsilon\bar a(1-q)^2)\ge4/(\varepsilon\bar a(1-q))$, so $N\le L_\varepsilon/\log(1/q)+1$.

\smallskip
\noindent\textbf{(b)}
From Lemma~\ref{lem:sinc}, $(M+1)h<\log(8\Lambda/\delta_0)+h$ and $(\Nsinc+1)h<\log(2\log\frac8{h\delta_0})+h$, so $m=(M+1)+(\Nsinc+1)-1\le h^{-1}\bigl[\log(8\Lambda/\delta_0)+\log(2\log\frac8{h\delta_0})\bigr]+1$.
Moreover $h^{-1}=1+\frac2{\pi^2}\log\frac1{\delta_0}\le1+\frac2{\pi^2}L_\varepsilon$ (as $1/\delta_0=15B/\eta_0$) and
\[
    \log\frac{8\Lambda}{\delta_0}=\log8+\frac2\theta\log\frac{60B}{\eta_0}+\log\frac{15B}{\eta_0}\le\log8+\Bigl(1+\frac2\theta\Bigr)L_\varepsilon.
\]

\smallskip
\noindent\textbf{(c)}
Per block: $P_t(bv)$ and $P_tv$ (two applications at time $t$), $S$ applied to one channel ($m$ applications), and $I-P_T$ applied once to the merged channel.
Precomputation: $P_tb$, $P_\tau b$, and $v_0=(I-P_T)Sf$ ($m+1$ applications).
The products per block are $bv$, $P_tb\cdot P_tv$ and $v\,b_\tau$.
The asymptotics follow from (a) and (b).

\smallskip
\noindent\textbf{(d)}
The sinc nodes are $(h\me^{jh},\me^{jh})$, $-M\le j\le\Nsinc$, determined by $(h,M,\Nsinc)$.

\smallskip
\noindent\textbf{(e)}
$\sqrt\Lambda=(60B/\eta_0)^{1/\theta}$ and $t^{-1/2}=\sqrt{40}(20B/\eta_0)^{1/\theta}$ by Theorem~\ref{thm:step}, whence the ratio $\sqrt{40}\,3^{-1/\theta}$; $\tau=80t$.
The supremum in term (3) of \eqref{eq:Aprime} is attained at $\abs{k}\asymp t^{-1/2}$.
From $M=\lceil\log(8\Lambda/\delta_0)/h\rceil-1$ we get $\log(8\Lambda/\delta_0)-h\le Mh<\log(8\Lambda/\delta_0)$, which gives the range of $\me^{-Mh}$ since $h\le1$.

\smallskip
The mixed-family statements follow from Theorem~\ref{thm:step}(b) in the same way, with $s=2/\delta_0=30B/\eta_0$.
\end{proof}

\begin{corollary}[Mixed primitives: five theorem-prescribed scalars]
\label{cor:mixed}
In the family $\Fmix$ the scheme $N_\varepsilon$ of Theorem~\ref{thm:main} has depth $N$ and is determined by the five real scalars $(s,t,\tau,T,\bar a)$ and the integer $N$, all fixed by formulas in $(\theta,B,a_{\min},a_{\max},\varepsilon)$:
\[
    s=\frac2{\delta_0}=\frac{30B}{\eta_0},\qquad T=\log\frac{30B}{\eta_0},\qquad t,\tau\ \text{as in Theorem~\ref{thm:step}}.
\]
Under the counting convention of Definition~\ref{def:counting} it uses $3N+3$ heat applications ($2$ at time $t$ and $1$ at time $T$ per block; $P_tb$, $P_\tau b$ and the $P_T$ in $v_0=R_mf$ in the precomputation), $N+1$ resolvent applications ($1$ per block and $1$ in $v_0$), and $3N$ products.
\end{corollary}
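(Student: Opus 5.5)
The corollary is essentially bookkeeping on top of Theorem~\ref{thm:main} and Theorem~\ref{thm:step}(b), so I will argue by specialising the scheme and then counting. First I will fix the mixed-family choice of Theorem~\ref{thm:step}(b) at $(\theta,B,\eta_0)$ with $\delta_0=\eta_0/(15B)$. This gives $s=2/\delta_0=30B/\eta_0$ and $T=\log(2/\delta_0)=\log(30B/\eta_0)$, with $t,\tau$ given by the displayed formulas of Theorem~\ref{thm:step}. Since $\eta_0$ is an explicit function of $(\varepsilon,\bar a,q)$, hence of $(a_{\min},a_{\max},\varepsilon)$, all of $s,t,\tau,T$ are explicit functions of $(\theta,B,a_{\min},a_{\max},\varepsilon)$. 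The same holds for $\bar a$ and for the integer $N$ defined in Theorem~\ref{thm:main}. Correctness ($\Hd$-error $\le\varepsilon$) is then already part of Theorem~\ref{thm:main}, whose proof treated both families.

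Next I will check that these scalars determine the whole network. The inputs are $a$ and $f$. The affine map $a\mapsto b=a/\bar a-1$ uses only $\bar a$. The layers used are $P_t,P_\tau,P_T$, which are $\Fheat$-layers with one exponential each, and $sJ_s=\psi(L_0)$ with $\psi(\lambda)=s/(1+s\lambda)$, which lies in $\Fmix$. Every remaining operation is a pointwise product or an affine combination with coefficients $1/(2t)$, $1/\tau$ and $\pm1$. By Remark~\ref{rem:schematic}, $R_m=(I-P_T)S$ with $S=sJ_s$, and each block is
\[
v_{i+1}=v_0-(I-P_T)\bigl[bv_i+S\bigl(\tfrac1{2t}[P_t(bv_i)-P_tb\,P_tv_i]-v_ib_\tau\bigr)\bigr].
\]
So the network is a function of $(s,t,\tau,T,\bar a)$ and $N$ only.

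Finally I will count under Definition~\ref{def:counting}. The precomputation applies $P_tb$ and $P_\tau b$ (two heat applications) and forms $v_0=(I-P_T)Sf$ (one resolvent and one $P_T$). Each of the $N$ blocks applies $P_t$ to $bv_i$ and to $v_i$ (two heat applications), $S$ once (one resolvent) and $I-P_T$ once to the merged channel (one heat application). Each block also forms the three products $bv_i$, $P_tb\cdot P_tv_i$ and $v_ib_\tau$. This gives $3+3N$ heat applications, $N+1$ resolvent applications and $3N$ products.

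The only delicate point is the convention. $I-P_T$ has to be applied once to the merged channel, by linearity, rather than separately inside $R_m$ and to $bu$. I will justify this from the definition $\Knet u=(I-P_T)(bu)+(I-P_T)S(\cdots)$, using the fact that the same $T$ appears in both places.
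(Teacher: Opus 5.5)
Your proposal is correct and follows the paper's argument, which inserts the mixed-family recipe of Theorem~\ref{thm:step}(b) into the scheme of Theorem~\ref{thm:main} and reads off the counts as in Proposition~\ref{prop:counts}, using the block structure of Remark~\ref{rem:schematic}. You spell out the bookkeeping the paper leaves implicit: $3$ heat applications, $1$ resolvent and $3$ products per block, $3$ heat applications and $1$ resolvent in the precomputation, and merging the two $I-P_T$ factors using the common $T$.
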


\begin{proof}
Theorem~\ref{thm:step}(b) inserted in Theorem~\ref{thm:main}, with the counts of Proposition~\ref{prop:counts}.
\end{proof}

Corollary~\ref{cor:mixed} shows that the $O(\theta^{-1}\log^2(B/\eta_0))$ heat applications per block of the heat family are the price of representing $1/\lambda$ by exponential sums, not a feature of the mechanism: the resolvent replaces the approximate inverse, while the coupling $\nabla a\cdot\nabla u$ is still produced by heat layers.
The five scalars are theorem-prescribed quantities determined by the problem-class parameters and the target tolerance; the class $\Fmix$ is problem-adapted, and their number is not comparable with lower bounds on parametric complexity for generic architectures \cite{LS23}.

\subsection{Complexity and resolution}
\label{sec:complexity}
Table~\ref{tab:notions} separates the quantities that the word ``complexity'' may refer to; all entries refer to Theorem~\ref{thm:main} and Proposition~\ref{prop:counts}, with $\theta$ fixed.

\begin{table}[ht]
    \centering
    \caption{Quantities associated with the scheme $N_\varepsilon$. Rows (a)--(c) are polylogarithmic in $1/\varepsilon$; rows (d)--(f) are polynomial in $1/\varepsilon$, and (e) is exponential in $d$ through $K^d$.}
    \label{tab:notions}
    \small
    \renewcommand{\arraystretch}{1.3}
    \begin{tabular}{>{\raggedright\arraybackslash}p{0.27\textwidth}>{\raggedright\arraybackslash}p{0.33\textwidth}>{\raggedright\arraybackslash}p{0.32\textwidth}}
        \toprule
        Quantity & Heat family & Mixed family \\
        \midrule
        (a) theorem-prescribed scalars, shared across blocks
        & $2m+4=O(\theta^{-1}\log^2(1/\varepsilon))$
        & $5$ scalars $(s,t,\tau,T,\bar a)$ and the integer $N$ \\
        (b) operator applications (Definition~\ref{def:counting})
        & $(N+1)(m+3)$ heat
        & $3N+3$ heat, $N+1$ resolvent \\
        (c) depth
        & $N=O(\log(1/\varepsilon)/\log(1/q))$
        & same \\
        (d) spectral resolution $\Kreq$ per direction
        & $\max\{\sqrt\Lambda,t^{-1/2}\}\asymp(B\kappa^2/(\varepsilon\bar a))^{1/\theta}$
        & $t^{-1/2}\asymp(B\kappa^2/(\varepsilon\bar a))^{1/\theta}$ \\
        (e) discretised arithmetic cost (accounting, see below)
        & $\tilde O\bigl(\theta^{-1}\frac{\log^3(1/\varepsilon)}{\log(1/q)}(\frac{B\kappa^2}{\varepsilon\bar a})^{d/\theta}\bigr)$
        & $\tilde O\bigl(\frac{\log(1/\varepsilon)}{\log(1/q)}(\frac{B\kappa^2}{\varepsilon\bar a})^{d/\theta}\bigr)$ \\
        (f) magnitudes
        & $1/(2t)=20(20B/\eta_0)^{2/\theta}$; smallest heat time $\asymp(\eta_0/B)^{1+2/\theta}$
        & $s=30B/\eta_0$; $1/(2t)$ as before; smallest heat time $t$ \\
        \bottomrule
    \end{tabular}
\end{table}

\label{subsec:discretised}
In the generic case $\eta_0=\varepsilon\bar a(1-q)^2/8$ one has $B/\eta_0=2(\kappa+1)^2B/(\varepsilon\bar a)$, hence
\[
    t^{-1/2}=\sqrt{40}\Bigl(\frac{40(\kappa+1)^2B}{\varepsilon\bar a}\Bigr)^{1/\theta},\qquad
    \sqrt\Lambda=\Bigl(\frac{120(\kappa+1)^2B}{\varepsilon\bar a}\Bigr)^{1/\theta},
\]
and for $\theta=1$,
\[
    \Kreq=t^{-1/2}=\sqrt{40}\cdot40\,\frac{(\kappa+1)^2B}{\varepsilon\bar a}\approx253\,\frac{(\kappa+1)^2B}{\varepsilon\bar a},
\]
independently of $d$ as a per-direction frequency.
The resolvent parameter $s$ is a magnitude, not a resolution requirement.

A requirement $K\gtrsim1/(\varepsilon\bar a)$ per direction is unavoidable for any band-limited approximation of $\Hd$ output from $L^2$ data: for $a\equiv\bar a$ and $f$ concentrated at $\abs{k}=K+1$, $\norm{(I-Q_K)G(a,f)}_1=\norm{f}/(\bar a(K+1))$, where $Q_K$ is the projection onto frequencies $\abs{k}\le K$, so any output band-limited to $\abs{k}\le K$ has $\Hd$ error $\gtrsim1/(\bar aK)$ for some admissible $f$.

We do not prove a discretised version of Theorem~\ref{thm:main}; the following is cost accounting under the assumption that the estimates of Theorem~\ref{thm:step} must hold on the grid.
On a spectral grid with $K$ modes per direction, heat layers cost $O(K^d\log K)$ per application via the FFT and products cost $O(K^d)$, so the cost of $N_\varepsilon$ is of order $(N+1)(m+3)K^d\log K+3NK^d$ (heat family) or $(4N+4)K^d\log K+3NK^d$ (mixed family).
Term (3) of \eqref{eq:Aprime} is attained at $\abs{k}\asymp t^{-1/2}$, and the band estimate for $R_m$ concerns $\abs{k}\le\sqrt\Lambda$, so $K\ge\Kreq$ is needed; this gives row (e), with the same grid exponent $d/\theta$ in both families.
In the experiment of Section~\ref{sec:numerics} a grid far below $\Kreq$ already gives errors well below $\varepsilon$, so this sufficient requirement is far from tight.
For comparison, a direct FFT-preconditioned conjugate-gradient solve costs $\tilde O(\sqrt\kappa\log(1/\varepsilon)K^d)$ with $K\asymp1/\varepsilon$.
The Fourier--Galerkin emulation of \cite[Setting~25, Theorem~26]{KLM21}, for a fixed right-hand side in $\dot H^{k-1}$ and coefficients in $H^s$ with $s\ge d/2+k+\delta$, reaches error $\varepsilon$ with $N\asymp\varepsilon^{-1/k}$ modes per direction and network size $\lesssim N^d\log N$, per-mode weights being counted as parameters (\cite[Theorem~26]{KLM21}; in terms of $\varepsilon$, \cite[Remark~27]{KLM21}); it trades data regularity for resolution, whereas the present construction depends on coefficient regularity $\theta$, and the two parameter counts are not comparable (the depths are).
For the recipe of Theorem~\ref{thm:dirichlet-main} on $\TT^d$ ($\theta=1$ class of Corollary~\ref{cor:torus-C2}) rows (d) and (f) of Table~\ref{tab:notions} read $\Kreq=\sqrt\Lambda=20B/\eta_0$ and $1/(2t)=12.5\,B^2/\eta_0^2$, with the same exponent $d$ in row (e).

\enlargethispage{2\baselineskip}
\begin{remark}[Floating point]
\label{rem:floating}
Coefficient magnitudes are polynomial in $B\kappa^2/(\varepsilon\bar a)$ with exponent $2/\theta$ (row (f)), and evaluating $\Gamma_t$ as $\frac1{2t}[P_t(bv)-P_tb\,P_tv]$ loses about $\log_{10}(1/(2t))$ digits through cancellation.
With the proved constants the construction is therefore realisable in double precision only for moderate $\varepsilon$; the threshold depends on the recipe, since
\begin{align*}
    \frac1{2t}&=8000\,\frac{B^2}{\eta_0^2}&&\text{(Wiener-class recipe of Theorem~\ref{thm:step} at $\theta=1$)},\\
    \frac1{2t}&=12.5\,\frac{B^2}{\eta_0^2}&&\text{(recipe of Theorem~\ref{thm:dirichlet-main})}.
\end{align*}
Section~\ref{sec:numerics} reports empirical thresholds for one example and both recipes, which are observations, not theorems.
The cancellation is intrinsic to synthesising a derivative from a semigroup at scale $\sqrt t\asymp\eta_0/B$; the resolvent removes the smallest exponential-sum times but not $1/(2t)$, and supplying $L_0a$ as an input channel removes the $\tau$-channel but not $\Gamma_t$.
\end{remark}

\begin{remark}[Saturation]
\label{rem:saturation}
The rates $\sqrt t$ and $\Lambda^{-1/2}$ in \eqref{eq:Aprime} do not improve for $b\in A^\sigma$, $\sigma>2$, since they arise from high frequencies of $u$ (the regimes $\abs{k}\ge2\abs{\mu}$ in term (3) and $\abs{k}\ge\abs{k'}/2$ in term (4)), and the operator-norm bounds do not improve for data smoother than $L^2$; only the $\tau$-term improves, to $(1+\delta)\tau\norm{b}_{A^3}$ for $b\in A^3$.
In the $t$- and $\Lambda$-channels the construction therefore cannot trade coefficient regularity for resolution, unlike Galerkin-based emulations.
\end{remark}

\section{Numerical sanity check}
\label{sec:numerics}

We check the construction with the parameters exactly as prescribed, for both parameter recipes; the experiment is not an assessment of computational performance.
We take $d=1$, $a(x)=1+0.6\sin x+0.25\cos(2x+1)$ and $f\propto\cos x+0.5\sin3x+0.3\cos4x$ with $\norm{f}_{L^2}=1$.
On the grid, $a_{\min}\approx0.196$, $a_{\max}\approx1.644$, $\bar a\approx0.920$ and $q\approx0.787$ ($\kappa\approx8.4$); the class constants are $B=\norm{b}_{A^2}\approx5.14$ for the Wiener-class recipe of Theorem~\ref{thm:step} with $\theta=1$, and $B=\norm{b'}_\infty+\norm{b''}_\infty\approx2.84$ for the recipe of Theorem~\ref{thm:dirichlet-main} with $K=0$ (Corollary~\ref{cor:torus-C2}); $\eta_0$ and $N$ are the same for both.
The reference solution is computed by exact one-dimensional quadrature.
Layers are applied as Fourier multipliers by the FFT on a uniform grid with $n=2^{15}$ points, in double precision.
All parameters are as prescribed by Lemma~\ref{lem:sinc} and Theorems~\ref{thm:main}, \ref{thm:step} and~\ref{thm:dirichlet-main}: $T=\log(5/\delta_0)$ in the heat family, and $T=\log(2/\delta_0)$, $s=2/\delta_0$ in the mixed family, each with its own $(\delta_0,\Lambda,t,\tau)$.

\begin{table}[ht]
    \centering
    \caption{Both prescribed recipes in double precision on the grid $n=2^{15}$: $\Hd$ error divided by the target $\varepsilon$ for the heat and the mixed family, with the common depth $N$ and, for each recipe, the number $m$ of exponential-sum times and the largest multiplier $1/(2t)$. The operator counts are $(N+1)(m+3)$ heat applications (heat family) and $3N+3$ heat plus $N+1$ resolvent applications (mixed family), Definition~\ref{def:counting}.}
    \label{tab:numerics}
    \footnotesize
    \renewcommand{\arraystretch}{1.25}
    \setlength{\tabcolsep}{4pt}
    \begin{tabular}{cc|cccc|cccc}
        & & \multicolumn{4}{c|}{recipe of Theorem~\ref{thm:step} ($\theta=1$, $B\approx5.14$)} & \multicolumn{4}{c}{recipe of Theorem~\ref{thm:dirichlet-main} ($K=0$, $B\approx2.84$)}\\
        $\varepsilon$ & $N$ & $m$ & $1/(2t)$ & heat & mixed & $m$ & $1/(2t)$ & heat & mixed\\
        \hline
        $0.5$    & 16 & 121 & $3.1\cdot10^{10}$ & 0.011 & 0.011 & 100 & $1.5\cdot10^{7}$  & 0.011 & 0.011\\
        $0.2$    & 20 & 137 & $1.9\cdot10^{11}$ & 0.009 & 0.009 & 115 & $9.2\cdot10^{7}$  & 0.009 & 0.009\\
        $0.1$    & 23 & 150 & $7.7\cdot10^{11}$ & 0.009 & 0.009 & 127 & $3.7\cdot10^{8}$  & 0.008 & 0.008\\
        $0.05$   & 26 & 164 & $3.1\cdot10^{12}$ & 0.012 & 0.013 & 140 & $1.5\cdot10^{9}$  & 0.007 & 0.007\\
        $0.02$   & 29 & 183 & $1.9\cdot10^{13}$ & 0.10  & 0.10  & 157 & $9.2\cdot10^{9}$  & 0.008 & 0.008\\
        $0.01$   & 32 & 198 & $7.7\cdot10^{13}$ & 0.74  & 0.95  & 171 & $3.7\cdot10^{10}$ & 0.007 & 0.007\\
        $0.005$  & 35 & 214 & $3.1\cdot10^{14}$ & 5.3   & 6.8   & 186 & $1.5\cdot10^{11}$ & 0.006 & 0.006\\
        $0.002$  & 39 & 235 & $1.9\cdot10^{15}$ & $1.4\cdot10^{2}$ & $1.5\cdot10^{2}$ & 206 & $9.2\cdot10^{11}$ & 0.063 & 0.059\\
        $0.001$  & 42 & 253 & $7.7\cdot10^{15}$ & $2.4\cdot10^{2}$ & $3.5\cdot10^{2}$ & 223 & $3.7\cdot10^{12}$ & 0.98  & 0.92
    \end{tabular}
\end{table}

\emph{Moderate $\varepsilon$.}
For $\varepsilon\ge0.05$ the observed error is about two orders of magnitude below the target in both families and for both recipes (Table~\ref{tab:numerics}), consistent with pessimistic constants; the two families agree although the mixed family uses one resolvent instead of $m$ exponential-sum times per block, and the counts match Proposition~\ref{prop:counts} and Corollary~\ref{cor:mixed} (e.g.\ $(N+1)(m+3)=24\cdot153=3672$, resp.\ $24\cdot130=3120$, heat applications and $3N+3=72$ at $\varepsilon=0.1$).

\smallskip
\emph{Small $\varepsilon$.}
The two recipes differ at small $\varepsilon$.
With the Wiener-class recipe, accuracy deteriorates from $\varepsilon\approx0.02$ on; at $\varepsilon=0.01$ the target is still met, at $\varepsilon=0.005$ it is missed by a factor of about $5$ (heat) and $7$ (mixed), and below that the output is dominated by rounding.
With the recipe of Theorem~\ref{thm:dirichlet-main} the error stays about two orders of magnitude below the target down to $\varepsilon=0.005$, and the target is met at $\varepsilon=0.002$ and, within rounding, at $\varepsilon=0.001$ (err$/\varepsilon\approx0.98$ and $0.92$).
The empirical double-precision thresholds for this example are therefore $\varepsilon\approx0.005$--$0.01$ for the Wiener-class recipe and $\varepsilon\approx0.001$ for the recipe of Theorem~\ref{thm:dirichlet-main}.

The mechanism is the cancellation of Remark~\ref{rem:floating}: at $\varepsilon=0.01$ the Wiener-class recipe has $1/(2t)\approx7.7\cdot10^{13}$, and machine precision times $1/(2t)$ times $O(1)$ data is of order $10^{-2}$; the recipe of Theorem~\ref{thm:dirichlet-main} has $1/(2t)$ smaller by a factor $640$ at equal $B$, and here moreover a smaller $B$, so the same loss occurs about one order of magnitude further down in $\varepsilon$.
Because the rounding-dominated entries depend on the order of the floating-point operations, re-implementations of the same computation differ there by up to about $30\%$, with the same qualitative conclusion.

\smallskip
\emph{Resolution.}
Here $\theta=1$; the resolution scale $\Kreq$ is $t^{-1/2}$, between $2.5\cdot10^5$ and $1.2\cdot10^8$, for the Wiener-class recipe and $\sqrt\Lambda$, between $2.2\cdot10^4$ and $1.1\cdot10^7$, for the other, while the grid resolves only $\abs{k}<2^{14}\approx1.6\cdot10^4$; the errors for $\varepsilon\ge0.05$ are nevertheless well below the target.

These numbers are observations for one example, not theorems.
They show that the ill-conditioning at small $\varepsilon$ seen with the Wiener-class recipe is a property of that recipe's constants, not of the mechanism, which the recipe of Theorem~\ref{thm:dirichlet-main} realises with the same depth and smaller multipliers.

\section{Extension to strongly local Dirichlet spaces}
\label{sec:dirichlet}

In this section the reference operator is the generator of a Dirichlet form with a curvature lower bound; the Fourier analysis of Section~\ref{sec:prep} is replaced by carr\'e du champ calculus.
All functions are real-valued; the complexification of a real bounded operator has the same norm, so nothing changes if complex $L^2$ is preferred.

\subsection{Setting and statements}
We use the framework of Ambrosio, Gigli and Savar\'e \cite{AGS15}.

\begin{assumption}
\label{ass:M}
$(X,\mathcal B,\mu)$ is a probability space with $\mu$-complete $\sigma$-algebra, and $\Ecal$ is a symmetric, strongly local Dirichlet form on $L^2(X,\mu)$ with dense domain $V:=D(\Ecal)$, generating a Markov semigroup \cite[(2.1), p.~347; p.~348]{AGS15}.
Let $L_0\ge0$ be the self-adjoint operator defined by $\Ecal(f,g)=\int(L_0f)g\dint\mu$ for $f\in D(L_0)$, $g\in V$ (so $L_0=-\Delta_{\Ecal}$ in the notation of \cite[p.~350]{AGS15}), and $P_t=\me^{-tL_0}$.
Moreover:
\begin{enumerate}[label=(\roman*)]
    \item $\mu$ is invariant, $\int P_tf\dint\mu=\int f\dint\mu$ for $f\in L^1(\mu)$ \cite[(2.12), p.~350]{AGS15}; hence $P_t1=1$;
    \item $\sigma(L_0)\subset\{0\}\cup[1,\infty)$ and $\ker L_0=\RR\one$;
    \item $\Ecal$ satisfies $\BE(-K,\infty)$ for some $K\ge0$ in the sense of \cite[Def.~2.4, p.~357]{AGS15} (in the notation of \cite{AGS15}, $\BE(K',\infty)$ with $K'=-K$).
\end{enumerate}
\end{assumption}

The spectral gap is normalised to $\lambda_1\ge1$.
A form $\Ecal$ whose generator has spectral gap $\lambda_1\in(0,1)$ is reduced to this case by passing to $\Ecal/\lambda_1$, i.e.\ by replacing $L_0$ by $L_0/\lambda_1$; then $\Gamma$ becomes $\Gamma/\lambda_1$, so that $\norm{\sqrt{\Gamma(b)}}_\infty$ is divided by $\lambda_1^{1/2}$ and $\norm{L_0b}_\infty$ by $\lambda_1$ in the class constant $B$, the curvature constant $K$ becomes $K/\lambda_1$, and a heat application $\me^{-tL_0/\lambda_1}$ is the original semigroup at time $t/\lambda_1$ (the resolvent parameter $s$ becomes $s/\lambda_1$).
We use the following facts from \cite{AGS15}.
\begin{enumerate}[label=(F\arabic*)]
    \item Under $\BE(-K,\infty)$, $\Ecal$ admits a carr\'e du champ on all of $V$ \cite[Cor.~2.3, closing sentence, p.~356]{AGS15}: there is a symmetric bilinear map $\Gamma\colon V\times V\to L^1(\mu)$ with $\Gamma(f):=\Gamma(f,f)\ge0$ and $\Ecal(f,g)=\int\Gamma(f,g)\dint\mu$ \cite[(2.6), p.~348; p.~349]{AGS15}. Since $\Gamma(f+\lambda g)\ge0$ a.e.\ for all rational $\lambda$, $\abs{\Gamma(f,g)}^2\le\Gamma(f)\Gamma(g)$ a.e.
    \item $V_\infty:=V\cap L^\infty(\mu)$ is an algebra under pointwise multiplication \cite[p.~348]{AGS15} (where it is attributed to Bouleau and Hirsch), and the Leibniz rule $\Gamma(fg,h)=f\Gamma(g,h)+g\Gamma(f,h)$ holds for $f,g,h\in V_\infty$ \cite[display after (2.7), p.~349]{AGS15}.
    \item Gradient bound \cite[Cor.~2.3(vi), (2.35) with $\nu=0$, p.~356]{AGS15}: $\Gamma(P_rf)\le\me^{2Kr}P_r\Gamma(f)$ a.e.\ for $f\in V$, $r>0$.
    \item Reverse local Poincar\'e inequality \cite[Cor.~2.3(iv), (2.34) with $\nu=0$, p.~356]{AGS15}: for $f\in L^2$ and $r>0$,
    \[
        \frac{1-\me^{-2Kr}}{2K}\,\Gamma(P_rf)\le\tfrac12P_r(f^2)-\tfrac12(P_rf)^2
    \]
    (read $r$ for the fraction if $K=0$). For $f\in L^\infty$ this gives, with $1-\me^{-y}\ge y\me^{-y}$,
    \begin{equation}
        \label{eq:reverse-poincare}
        \norm{\sqrt{\Gamma(P_rf)}}_\infty\le\me^{Kr}(2r)^{-1/2}\norm{f}_\infty .
    \end{equation}
\end{enumerate}

We write $\norm{u}_1^2=\Ecal(u,u)$, $\Hd=\{u\in V:\int u\dint\mu=0\}$ (a Hilbert space with $\norm{u}\le\norm{u}_1$), $\Pi$ for the projection onto mean-zero functions, $R=L_0^{-1}\Pi$ and $\norm{g}_{-1}:=\norm{L_0^{-1/2}\Pi g}$, so that $\norm{g}_{-1}\le\norm{g}$ and $\norm{g}_{-1}=\sup\{\abs{\int g\varphi\dint\mu}:\varphi\in\Hd,\ \norm{\varphi}_1\le1\}$.
The product defect $\Gamma_t(b,u)$, $b_\tau$, the block $\Knet$ and the scheme are defined by the same formulas as on $\TT^d$, \eqref{eq:intro-gamma} and \eqref{eq:block-intro}, with $P_t=\me^{-tL_0}$.
Definition~\ref{def:HR}, Lemma~\ref{lem:HR} and Lemmas~\ref{lem:sinc} and~\ref{lem:resolvent} hold verbatim, since they concern scalar functions of $L_0$ on $\sigma(L_0)\subset\{0\}\cup[1,\infty)$.

\emph{Examples.} The flat torus $\TT^d$ with $L_0=-\Delta$ satisfies Assumption~\ref{ass:M} with $K=0$ (Corollary~\ref{cor:torus-C2}).
Compact connected Riemannian manifolds with $L_0=-\Delta_g/\lambda_1$ and Ricci curvature bounded below by $-K\lambda_1$, and weighted manifolds with the corresponding Bakry--\'Emery tensor bound, are the standard examples \cite[Section~1]{AGS15}, \cite[Section~1.16 and Appendix~C]{BGL14}; the metric measure spaces with Riemannian Ricci curvature bounded below by $-K$ of \cite{AGS14} satisfy $\BE(-K,\infty)$ \cite[Theorem~6.2]{AGS14} (see \cite[Section~1, p.~344]{AGS15}), and fall under Assumption~\ref{ass:M} whenever the reference measure is finite (normalised to a probability measure) and the spectral-gap condition (ii) holds.
None of these examples is used in the proofs.

For $0<a_{\min}\le a_{\max}$ and $B\ge1$, with $\bar a$, $q$, $\kappa$ as in \eqref{eq:intro-constants}, the coefficient class is
\begin{equation}
    \label{eq:class-B}
    \begin{multlined}
    \AB(B)=\Bigl\{a\ \Big|\ a_{\min}\le a\le a_{\max}\ \mu\text{-a.e.},\ b:=\frac a{\bar a}-1\in D(L_0),\\
    \norm{b}_{\mathcal B}:=\norm{\sqrt{\Gamma(b)}}_\infty+\norm{L_0b}_\infty\le B\Bigr\}.
    \end{multlined}
\end{equation}
For $a\in\AB(B)$ one has $\norm{b}_\infty\le q$, so $b\in V_\infty\cap D(L_0)$.
For mean-zero $f\in L^2$, $G(a,f)\in\Hd$ denotes the unique solution of $\int a\Gamma(u,v)\dint\mu=\int fv\dint\mu$ for all $v\in\Hd$ (Lax--Milgram).

\begin{theorem}[$\Gamma$-consistency]
\label{thm:gamma-consistency}
Let Assumption~\ref{ass:M} hold, and let $b\in L^\infty(\mu)\cap D(L_0)$ be real with $\norm{b}_{\mathcal B}<\infty$.
Then for all $u\in V$ and $t>0$,
\[
    \norm{(\Gamma_t-\Gamma)(b,u)}_{-1}\le1.37\,\me^{Kt}\sqrt t\,\norm{b}_{\mathcal B}\norm{u}_1 .
\]
\end{theorem}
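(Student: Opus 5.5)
We would start from the semigroup interpolation identity $P_t(bu)-P_tb\,P_tu=2\int_0^tP_s\Gamma(P_{t-s}b,P_{t-s}u)\dint s$, which is the abstract form of Lemma~\ref{lem:product-defect} referred to in Remark~\ref{rem:bakry-emery}. It gives
\[
    \Gamma_t(b,u)=\frac1t\int_0^tP_s\Gamma(P_rb,P_ru)\dint s,\qquad r=t-s.
\]
The difference $(\Gamma_t-\Gamma)(b,u)$ is then the $s$-average of
\[
    (P_s-I)\Gamma(P_rb,P_ru)+\Gamma(P_rb-b,P_ru)+\Gamma(b,P_ru-u).
\]
We would bound each piece in the dual norm $\norm{\cdot}_{-1}$ by testing against $\varphi\in\Hd$ with $\norm{\varphi}_1\le1$. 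The only spectral facts needed are the scalar bounds $\norm{(I-P_s)\varphi}\le c_0\sqrt s\,\norm{\varphi}_1$ and $\norm{(I-P_r)u}\le c_0\sqrt r\,\norm{u}_1$. Here $c_0=\sup_{x>0}(1-\me^{-x})/\sqrt x\approx0.64$, and these hold by functional calculus on $\{0\}\cup[1,\infty)$.

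\emph{First piece.} By symmetry of $P_s$, $\int(P_s-I)g\,\varphi=\int g\,(P_s-I)\varphi$ with $g=\Gamma(P_rb,P_ru)$. By (F1), $\abs{g}\le\sqrt{\Gamma(P_rb)}\sqrt{\Gamma(P_ru)}$. The gradient bound (F3) gives $\norm{\sqrt{\Gamma(P_rb)}}_\infty\le\me^{Kr}\norm{\sqrt{\Gamma(b)}}_\infty$, and $\int\Gamma(P_ru)=\Ecal(P_ru)\le\Ecal(u)$. Hence this piece contributes at most $c_0\me^{Kt}\sqrt s\,\norm{\sqrt{\Gamma(b)}}_\infty\norm{u}_1$.

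\emph{Second piece.} We write $P_rb-b=-\int_0^rP_\sigma L_0b\dint\sigma$ and apply the reverse Poincar\'e bound \eqref{eq:reverse-poincare} to $f=L_0b\in L^\infty$. This gives
\[
    \norm{\sqrt{\Gamma(P_rb-b)}}_\infty\le\int_0^r\me^{K\sigma}(2\sigma)^{-1/2}\dint\sigma\,\norm{L_0b}_\infty\le\me^{Kr}\sqrt{2r}\,\norm{L_0b}_\infty.
\]
Combined with $\norm{\sqrt{\Gamma(P_ru)}}\le\norm{u}_1$ and $\norm{\varphi}\le\norm{\varphi}_1$, the second piece is bounded by $\me^{Kt}\sqrt{2r}\,\norm{L_0b}_\infty\norm{u}_1$.

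\emph{Third piece.} Put $w=P_ru-u$. By the Leibniz rule (F2), $\Gamma(b,w)\varphi=\Gamma(b,w\varphi)-w\,\Gamma(b,\varphi)$, and $\int\Gamma(b,w\varphi)=\int(L_0b)\,w\varphi$ because $b\in D(L_0)$. Therefore
\[
    \Bigl|\int\Gamma(b,w)\varphi\Bigr|\le\bigl(\norm{L_0b}_\infty\norm{\varphi}+\norm{\sqrt{\Gamma(b)}}_\infty\norm{\varphi}_1\bigr)\norm{w}\le c_0\sqrt r\,\norm{b}_{\mathcal B}\norm{u}_1\norm{\varphi}_1 .
\]
Here we have used $\norm{\varphi}\le\norm{\varphi}_1$ for the first product and the $L^2$ bound on $\sqrt{\Gamma(\varphi)}$ for the second. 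This is the step where only $L_0b$ and $\sqrt{\Gamma(b)}$, and never $L_0^2b$, enter: the derivative is moved off $w$ by duality instead of being placed on $b$ twice.

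\emph{Averaging.} Averaging over $s\in(0,t)$ uses $\frac1t\int_0^t\sqrt s\dint s=\frac1t\int_0^t\sqrt r\dint s=\frac23\sqrt t$. Collecting terms gives a bound $(\frac23c_0+\frac23\sqrt2+\frac23c_0)\me^{Kt}\sqrt t\,\norm{b}_{\mathcal B}\norm{u}_1$, up to how the two components of $\norm{b}_{\mathcal B}$ are grouped. We would then optimise this grouping to reach the stated constant $1.37$. If the bookkeeping overshoots, a first fallback is to sharpen the $\sqrt{2r}$ integral; we have not checked that this suffices.

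\emph{Main obstacle.} We expect the hardest part to be justifying the $\Gamma$-calculus for unbounded $u,\varphi\in V$. The Leibniz rule (F2) and the identity $\int\Gamma(b,w\varphi)=\int L_0b\,w\varphi$ require $w\varphi\in V$, and (F2) is stated only on the algebra $V_\infty$. The interpolation identity also needs justification for general $u\in V$.

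We would first prove everything for $u,\varphi\in V_\infty$. The function $w=P_ru-u$ is bounded when $u$ is, by the Markov property, so $w\varphi\in V_\infty$. We would then extend by density, truncating with $(-n)\vee u\wedge n$, which is bounded in $V$ and converges in $V$ by the Markov property of the form. Both sides of the inequality are continuous in $u$ for the $V$-norm, since $\Gamma_t(b,\cdot)$ is $L^2$-bounded and $\Gamma(b,\cdot)\colon V\to L^2$ is bounded. The interpolation identity itself we would derive by differentiating $s\mapsto P_s(P_{t-s}b\,P_{t-s}u)$ for bounded $u$ and then pass to the limit by the same density argument.
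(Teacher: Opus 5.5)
Your proposal is correct and follows essentially the same route as the paper: the same interpolation identity and three-piece split of $X_s$, the same bounds for each piece via (F3) with the spectral estimate, via $P_rb-b=-\int_0^rP_\sigma L_0b\dint\sigma$ with the reverse Poincar\'e bound (the paper packages this as Lemma~\ref{lem:bochner}), and via Leibniz-rule duality (the paper's Lemma~\ref{lem:Hminus}), followed by the same density extension from $V_\infty$. The constant $1.37$ comes out of the regrouping you anticipated, so no sharpening of the $\sqrt{2r}$ integral is needed: $\alpha=\norm{\sqrt{\Gamma(b)}}_\infty$ collects $\tfrac43c_0\approx0.851$ and $\beta=\norm{L_0b}_\infty$ collects $\tfrac23(\sqrt2+c_0)\approx1.368$, so the total is at most $1.37(\alpha+\beta)=1.37\norm{b}_{\mathcal B}$.
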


\begin{theorem}[Representation]
\label{thm:dirichlet-representation}
Let Assumption~\ref{ass:M} hold, and let $b\in L^\infty(\mu)\cap D(L_0)$ be real with $\norm{b}_{\mathcal B}<\infty$.
The operator $K_b\in\LL(\Hd)$ defined by $\inner{K_bu,v}_1=\int b\Gamma(u,v)\dint\mu$ satisfies $\norm{K_b}\le\norm{b}_\infty$ and
\[
    K_b=\Pi M_b+R\bigl(\Gamma(b,\cdot)-M_{L_0b}\bigr),\qquad\norm{\Gamma(b,u)-uL_0b}\le\norm{b}_{\mathcal B}\norm{u}_1\quad(u\in\Hd).
\]
For $a\in\AB(B)$ and mean-zero $f\in L^2$, $G(a,f)=\bar a^{-1}(I+K_b)^{-1}Rf$ with $\norm{K_b}\le q<1$.
\end{theorem}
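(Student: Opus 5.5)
The proof of Theorem~\ref{thm:dirichlet-representation} has four steps: boundedness of $K_b$, a weak identity obtained from the Leibniz rule (F2), an $L^2$ bound for the remainder, and the reduction of the Darcy problem.

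\emph{Boundedness.} This follows exactly as on the torus. By (F1), $\abs{\Gamma(u,v)}\le\sqrt{\Gamma(u)}\sqrt{\Gamma(v)}$ a.e. Hence
\[
\Bigl|\int b\Gamma(u,v)\dint\mu\Bigr|\le\norm{b}_\infty\norm{u}_1\norm{v}_1 ,
\]
and the Riesz representation theorem in $\Hd$ defines $K_b$ with $\norm{K_b}\le\norm{b}_\infty$.

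\emph{The identity.} I first take $u,v\in\Hd\cap L^\infty$ and apply the Leibniz rule (F2) with $b\in V_\infty$:
\[
b\Gamma(u,v)=\Gamma(bu,v)-u\Gamma(b,v),\qquad u\Gamma(b,v)=\Gamma(b,uv)-v\Gamma(b,u).
\]
Integrating, the first term gives $\Ecal(bu,v)=\inner{\Pi(bu),v}_1$. For the middle term, $b\in D(L_0)$ gives $\int\Gamma(b,uv)\dint\mu=\int (L_0b)\,uv\dint\mu$. Combining,
\[
\int b\Gamma(u,v)\dint\mu=\inner{\Pi M_bu,v}_1+\int\bigl(\Gamma(b,u)-uL_0b\bigr)v\dint\mu .
\]
Since $\int gv\dint\mu=\inner{Rg,v}_1$ for mean-zero $v$ and $g\in L^2$, this is the asserted identity tested against $v$.

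To remove the boundedness assumption, I use truncations $u_n=(u\wedge n)\vee(-n)$, recentred to mean zero. Markov truncation is contractive in $\Ecal$ and converges in $V$ for a Dirichlet form, and every term is continuous in $u\in\Hd$ once the bound below is available. For $\Pi M_b$ this needs $\norm{\Pi(bu)}_1\le\norm{b}_\infty\norm{u}_1+\norm{\sqrt{\Gamma(b)}}_\infty\norm{u}$, which follows from Leibniz and Cauchy--Schwarz. Density of $V_\infty$ in $V$ handles $v$.

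\emph{The remainder bound.} By (F1) and the Poincar\'e inequality $\norm{u}\le\norm{u}_1$,
\[
\norm{\Gamma(b,u)}\le\norm{\sqrt{\Gamma(b)}}_\infty\norm{u}_1,\qquad \norm{uL_0b}\le\norm{L_0b}_\infty\norm{u}\le\norm{L_0b}_\infty\norm{u}_1 .
\]
Adding these gives the claimed bound by $\norm{b}_{\mathcal B}\norm{u}_1$.

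\emph{Reduction.} Since $\bar a(1+b)=a$, the weak form $\int a\Gamma(u,v)\dint\mu=\int fv\dint\mu$ reads $\bar a\inner{(I+K_b)u,v}_1=\inner{Rf,v}_1$. The Neumann series with $\norm{K_b}\le\norm{b}_\infty\le q<1$ then gives $G(a,f)=\bar a^{-1}(I+K_b)^{-1}Rf$.

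The main obstacle is the domain bookkeeping. Leibniz needs $V_\infty$, and $uv$ must lie in $V$ so that it can be paired with $L_0b$. The bounded-function truncation and density argument is where care is needed, especially the recentring of $u_n$ and the convergence $\Pi(bu_n)\to\Pi(bu)$ in $\Hd$.
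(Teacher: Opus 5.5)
Your proposal is correct and follows the paper's proof essentially step for step: (F1) gives $\norm{K_b}\le\norm{b}_\infty$; two applications of the Leibniz rule (F2), together with the definition of $L_0$, give the identity on $\Hd\cap V_\infty$; density extends it to $\Hd$ using $\norm{bu}_1\le\norm{b}_\infty\norm{u}_1+\norm{\sqrt{\Gamma(b)}}_\infty\norm{u}$; (F1) plus Poincar\'e gives the remainder bound; and the weak formulation gives the reduction. Your explicit recentring of the truncations, which is needed for the density of $\Hd\cap V_\infty$ in $\Hd$, is a detail the paper leaves implicit.
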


\begin{theorem}[Inexact preconditioned step]
\label{thm:dirichlet-step}
Let Assumption~\ref{ass:M} hold, let $b\in L^\infty(\mu)\cap D(L_0)$ be real with $\norm{b}_{\mathcal B}<\infty$, let $t,\tau>0$, $T\ge\frac12$, and let $R_m=r(L_0)$ with $r(0)=0$ satisfying \HR{} with $(\delta,\Lambda)$ (Definition~\ref{def:HR}; in particular $\delta\le1$).
Then $\Knet\in\LL(\Hd)$ has mean-zero output and
\begin{multline*}
    \norm{\Knet-K_b}_{\LL(\Hd)}\le\me^{-T}\norm{b}_\infty+(\delta+2\Lambda^{-1/2})\norm{b}_{\mathcal B}\\
    +(1+\delta)\bigl(1.37\,\me^{Kt}\sqrt t\,\norm{b}_{\mathcal B}+1.89\,\me^{K\tau}\sqrt\tau\,\norm{L_0b}_\infty\bigr).
\end{multline*}
\end{theorem}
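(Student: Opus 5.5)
The plan mirrors the Fourier proof of Theorem~\ref{thm:step}, with each multiplier estimate replaced by a spectral bound and the dual norm $\norm{\cdot}_{-1}$. Boundedness and mean-zero output follow as on $\TT^d$. The factors $I-P_T$ and $R_m$ annihilate constants. The operator $R_m\colon L^2\to\Hd$ is bounded by \eqref{eq:HR-consequences}, and $\Gamma_t(b,\cdot)$ is bounded on $L^2$ with norm at most $\norm{b}_\infty/t$ because $P_t$ is Markov. The function $b_\tau=\tau^{-1}\int_0^\tau P_sL_0b\dint s$ is bounded by $\norm{L_0b}_\infty$. Finally, $\norm{bu}_1$ is finite by the Leibniz rule (F2) and $\norm{\sqrt{\Gamma(b)}}_\infty<\infty$.

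By Theorem~\ref{thm:dirichlet-representation}, and using $I-P_T-\Pi=-P_T\Pi$, I will write
\[
    \Knet-K_b=-P_T\Pi M_b+(R_m-R)\bigl(\Gamma(b,\cdot)-M_{L_0b}\bigr)+R_m(\Gamma_t-\Gamma)(b,\cdot)-R_mM_{b_\tau-L_0b}.
\]
On $\TT^d$ terms (2) and (4) were kept separate; here they are merged, because Theorem~\ref{thm:dirichlet-representation} already bounds their sum in $L^2$.

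\emph{First term.} Since $\sup_{\lambda\ge1}\lambda^{1/2}\me^{-T\lambda}=\me^{-T}$ for $T\ge\frac12$, we get $\norm{P_T\Pi(bu)}_1\le\me^{-T}\norm{bu}\le\me^{-T}\norm{b}_\infty\norm{u}_1$, using Poincar\'e.

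\emph{Second term.} Lemma~\ref{lem:HR} together with the bound $\norm{\Gamma(b,u)-uL_0b}\le\norm{b}_{\mathcal B}\norm{u}_1$ gives $(\delta+2\Lambda^{-1/2})\norm{b}_{\mathcal B}\norm{u}_1$.

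\emph{Third and fourth terms.} For these I use the key functional-calculus fact $\norm{R_mg}_1\le(1+\delta)\norm{g}_{-1}$. It holds because $\lambda^{1/2}\abs{r(\lambda)}\le(1+\delta)\lambda^{-1/2}$ on $[1,\infty)$ by Lemma~\ref{lem:HR}, and $r(0)=0$. The third term then follows directly from Theorem~\ref{thm:gamma-consistency}.

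\emph{The $\tau$-channel estimate.} For the fourth term it remains to prove
\[
    \norm{u\,(b_\tau-L_0b)}_{-1}\le1.89\,\me^{K\tau}\sqrt\tau\,\norm{L_0b}_\infty\norm{u}_1 .
\]
Set $g=L_0b$, so that $b_\tau-g=\tau^{-1}\int_0^\tau(P_sg-g)\dint s$. For a test function $\varphi\in\Hd$ with $\norm{\varphi}_1\le1$, put $w=u\varphi$. By symmetry and the semigroup derivative,
\[
    \int(P_sg-g)w\dint\mu=-\int_0^s\Ecal(P_rg,w)\dint r=-\int_0^s\!\!\int\Gamma(P_rg,u\varphi)\dint\mu\dint r .
\]
By the Leibniz rule (F2), the Cauchy--Schwarz inequality for $\Gamma$ in (F1), and the reverse Poincar\'e bound \eqref{eq:reverse-poincare},
\[
    \abs{\Gamma(P_rg,u\varphi)}\le\me^{Kr}(2r)^{-1/2}\norm{g}_\infty\bigl(\abs{u}\sqrt{\Gamma(\varphi)}+\abs{\varphi}\sqrt{\Gamma(u)}\bigr),
\]
whose integral is at most $2\me^{Kr}(2r)^{-1/2}\norm{g}_\infty\norm{u}_1\norm{\varphi}_1$ by Cauchy--Schwarz and Poincar\'e. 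Then $\int_0^s(2r)^{-1/2}\dint r=\sqrt{2s}$, and averaging over $s\in(0,\tau)$ gives $\frac{2\sqrt2}{3}\sqrt\tau$. The constant is therefore $2\cdot\frac{2\sqrt2}{3}\approx1.886\le1.89$, with $\me^{Kr}\le\me^{K\tau}$. Adding the four contributions yields the stated bound.

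\emph{Main obstacle.} I expect the main difficulty to be the rigorous justification of this last computation. The Leibniz rule (F2) requires $u,\varphi\in V_\infty$, whereas $u,\varphi\in\Hd$ are unbounded in general. I would first prove the estimate for $u,\varphi\in V_\infty$ (or for truncations $(u\wedge n)\vee(-n)$, which are normal contractions and converge in $V$), then pass to the limit. The limit is justified because both sides are continuous in $\norm{\cdot}_1$: the left side involves the bounded multiplier $b_\tau-g$, and both $\Pi$ and truncation are harmless. A smaller point is that $(F4)$ is applied to $g\in L^\infty$ at each $r>0$, and the $r$-integral converges since $r^{-1/2}$ is integrable; exchanging the $r$-integral with the $\mu$-integral uses the differentiability of $s\mapsto\int P_sg\,w\dint\mu$ for $g\in L^2$ and $w\in V$.
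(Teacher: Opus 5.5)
Your proposal is correct and follows the paper's proof essentially step for step. You use the same decomposition, merging terms (2) and (4) through the $L^2$ bound of Theorem~\ref{thm:dirichlet-representation}, which gives the same $(\delta+2\Lambda^{-1/2})\norm{b}_{\mathcal B}$; you use $\norm{R_mg}_1\le(1+\delta)\norm{g}_{-1}$ together with Theorem~\ref{thm:gamma-consistency}; and for the $\tau$-channel you use the Leibniz rule with \eqref{eq:reverse-poincare} integrated in time plus density, reaching the same constant $4\sqrt2/3$. The only difference is packaging: you apply \eqref{eq:reverse-poincare} pointwise in $r$ inside $\Ecal(P_rg,u\varphi)$, whereas the paper applies Lemma~\ref{lem:bochner} to $w_\sigma=(P_\sigma-I)b$, which is the same computation.
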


\begin{theorem}[Emulation on Dirichlet spaces]
\label{thm:dirichlet-main}
Let Assumption~\ref{ass:M} hold, $0<a_{\min}\le a_{\max}$, $B\ge1$, $\varepsilon\in(0,1)$, and let $\eta_0$ and $N$ be as in Theorem~\ref{thm:main}.
Put
\[
    \delta_0=\frac{\eta_0}{10B},\quad t_0=\Bigl(\frac{\eta_0}{5B}\Bigr)^2,\quad\tau_0=\Bigl(\frac{\eta_0}{7B}\Bigr)^2,\quad t=t_0\me^{-2Kt_0},\quad\tau=\tau_0\me^{-2K\tau_0},
\]
and choose either
\begin{enumerate}[label=(\alph*)]
    \item (heat family) $T=\log(5/\delta_0)$, $\Lambda=(20B/\eta_0)^2$, and $R_m$ the sinc-quadrature inverse of Lemma~\ref{lem:sinc} with $(\delta_0,\Lambda)$; or
    \item (mixed family) $T=\log(2/\delta_0)$, $s=2/\delta_0$, and $R_m=(I-P_T)sJ_s$.
\end{enumerate}
Then $\norm{\Knet-K_b}_{\LL(\Hd)}\le0.82\,\eta_0$ in case (a) and $\le0.75\,\eta_0$ in case (b) for every $a\in\AB(B)$, and the scheme $v_0=R_mf$, $v_{i+1}=R_mf-\Knet v_i$ $(0\le i<N)$, $N_\varepsilon(a,f)=\bar a^{-1}v_N$ satisfies
\[
    \sup_{a\in\AB(B)}\ \sup_{\norm{f}_{L^2}\le1,\ \int f=0}\ \norm{G(a,f)-N_\varepsilon(a,f)}_{\Hd}\le\varepsilon .
\]
All parameters depend only on $B$, $a_{\min}$, $a_{\max}$, $\varepsilon$ and $K$, not on $X$, $\mu$, a dimension, or the spectrum of $L_0$ beyond $\lambda_1\ge1$.
The operator counts $(N+1)(m+3)$, resp.\ $3N+3$ heat and $N+1$ resolvent applications, of Proposition~\ref{prop:counts}(c) and Corollary~\ref{cor:mixed} hold verbatim, with $m$ given by Lemma~\ref{lem:sinc} at $(\delta_0,\Lambda)$.
\end{theorem}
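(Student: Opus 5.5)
The proof plugs the parameters into Theorem~\ref{thm:dirichlet-step}, checks that the bracket is at most $0.82\,\eta_0$ (resp.\ $0.75\,\eta_0$), and then reruns verbatim the error budget from the proof of Theorem~\ref{thm:main}, with Theorem~\ref{thm:dirichlet-representation} in place of Lemma~\ref{lem:reduction}.

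\emph{Step 1: admissibility.} Since $\eta_0\le\frac12$ and $B\ge1$, we have $\delta_0\le\frac1{20}$, so $1+\delta\le1.05$. In case (a), $T=\log(5/\delta_0)\ge\log100>\frac12$. In case (b), Lemma~\ref{lem:resolvent} gives $\delta\le\me^{-T}+1/(1+s)\le\delta_0/2+\delta_0/2=\delta_0$ with $\Lambda=\infty$. In case (a), Lemma~\ref{lem:sinc} gives \HR{} with $\delta=\delta_0$ on $[1,\Lambda]$. For $a\in\AB(B)$ we have $\norm{b}_\infty\le q<1$, $\norm{b}_{\mathcal B}\le B$ and $\norm{L_0b}_\infty\le B$.

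\emph{Step 2: the curvature factors.} The choice $t=t_0\me^{-2Kt_0}$ gives $t\le t_0$, hence
\[
\me^{Kt}\sqrt t=\me^{K(t-t_0)}\sqrt{t_0}\le\sqrt{t_0}=\frac{\eta_0}{5B}.
\]
In the same way, $\me^{K\tau}\sqrt\tau\le\sqrt{\tau_0}=\eta_0/(7B)$. So $K$ disappears from the bound, and this is why it enters the parameters only through these factors.

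\emph{Step 3: summing the bound of Theorem~\ref{thm:dirichlet-step}.}

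In case (a), the terms are:
\begin{itemize}
\item $\me^{-T}\norm{b}_\infty\le\delta_0/5=\eta_0/(50B)\le0.02\,\eta_0$;
\item $(\delta+2\Lambda^{-1/2})B\le\eta_0/10+\eta_0/10=0.2\,\eta_0$;
\item $1.05\cdot1.37/5\,\eta_0\approx0.288\,\eta_0$;
\item $1.05\cdot1.89/7\,\eta_0\approx0.284\,\eta_0$.
\end{itemize}
The total is about $0.79\,\eta_0$, within $0.82\,\eta_0$.

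In case (b), the terms are:
\begin{itemize}
\item $\me^{-T}\le\delta_0/2=\eta_0/(20B)$, contributing $0.05\,\eta_0$;
\item $\delta B\le0.1\,\eta_0$, with no tail since $\Lambda=\infty$;
\item the same two heat terms as in case (a).
\end{itemize}
The total is about $0.72\,\eta_0\le0.75\,\eta_0$.

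The only delicate point is keeping $1+\delta$ sharp enough. If $1.05$ is too loose, I would use $\delta\le\delta_0$ directly.

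\emph{Step 4: the error budget.} Theorem~\ref{thm:dirichlet-representation} gives $G(a,f)=\bar a^{-1}\sum_j(-K_b)^jRf$ with $\norm{K_b}\le q$. The mean-zero property of the iterates follows from Theorem~\ref{thm:dirichlet-step} and $r(0)=0$.

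Split the error into the same three terms as in the proof of Theorem~\ref{thm:main}, using $q'=q+\eta_0\le(1+q)/2$:
\begin{itemize}
\item the Neumann tail is at most $\varepsilon\bar a/4$ by the choice of $N$;
\item the perturbation of powers, bounded by telescoping, is at most $4\eta_0(1-q)^{-2}\le\varepsilon\bar a/2$;
\item the inverse error is controlled by \eqref{eq:HR-consequences}, which holds verbatim: $(\delta+2\Lambda^{-1/2})(1-q')^{-1}\le\frac{2}{1-q}\cdot\frac{\eta_0}{5B}\le\varepsilon\bar a/4$.
\end{itemize}
The total is at most $\varepsilon\bar a$, which gives the claim.

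The parameter dependence and the operation counts can be read off directly. The main obstacle is entirely in the cited Theorems~\ref{thm:gamma-consistency}--\ref{thm:dirichlet-step}; here only the numerical bookkeeping of Step 3 needs care.
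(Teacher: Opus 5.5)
Your proposal is correct and follows the paper's proof. You insert the curvature-compensated times ($\me^{Kt}\sqrt t\le\eta_0/(5B)$, $\me^{K\tau}\sqrt\tau\le\eta_0/(7B)$) into Theorem~\ref{thm:dirichlet-step}, sum the four contributions, and rerun the three-term budget of Theorem~\ref{thm:main} with Theorem~\ref{thm:dirichlet-representation}; your factor $1+\delta\le1.05$ gives totals $\approx0.79\,\eta_0$ and $\approx0.72\,\eta_0$, where the paper uses $1.1$ to get $0.8184\,\eta_0$ and $0.7484\,\eta_0$. The checks you leave implicit, $T=\log(2/\delta_0)\ge\log40>\frac12$ in case (b) and $\norm{Rf}_1\le\norm{f}$ from the spectral gap, are immediate.
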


The proofs use only $\norm{b}_\infty$, $\norm{\sqrt{\Gamma(b)}}_\infty$ and $\norm{L_0b}_\infty$; no bound on $L_0^2b$ is needed.
In this recipe $\sqrt\Lambda=20B/\eta_0$ is the larger frequency scale and $1/(2t)=12.5\,\me^{2Kt_0}B^2/\eta_0^2$; the curvature enters only through the factor $\me^{-2Kt_0}$ with $t_0\lesssim\varepsilon^2$.
On $\TT^d$ with $\theta=1$ these constants improve those of the Wiener-class recipe of Theorem~\ref{thm:step} by the factors recorded in Remark~\ref{rem:structure}, for a larger class (Corollary~\ref{cor:torus-C2}); Section~\ref{sec:numerics} compares the two recipes numerically.

\begin{corollary}[$\TT^d$ with a $C^2$-type class]
\label{cor:torus-C2}
On $\TT^d$ with $L_0=-\Delta$, Theorem~\ref{thm:dirichlet-main} holds with $K=0$ for the class of $a$ with $a_{\min}\le a\le a_{\max}$ and $b=a/\bar a-1\in W^{2,\infty}(\TT^d)$, $\norm{\nabla b}_\infty+\norm{\Delta b}_\infty\le B$; the parameters do not depend on $d$.
This class contains every $a$ with $a_{\min}\le a\le a_{\max}$ and $\norm{b}_{A^2}\le B/2$, and, if $a_{\min}<a_{\max}$, it is not contained in any ball of $A^2(\TT^d)$.
\end{corollary}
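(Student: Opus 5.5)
The plan is to obtain the corollary as a direct specialisation of Theorem~\ref{thm:dirichlet-main}, followed by two elementary statements about the class. First I will check Assumption~\ref{ass:M} for $X=\TT^d$, $\mu$ the normalised Lebesgue measure, $\Ecal(u,v)=\int\nabla u\cdot\nabla v$ on $V=H^1(\TT^d)$, and $L_0=-\Delta$.

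\textbf{Verifying the assumption.} The form is a symmetric, strongly local Dirichlet form; its carr\'e du champ is $\Gamma(u,v)=\nabla u\cdot\nabla v$. The semigroup $P_t$ is convolution with the periodic heat kernel. It is Markov and conservative, and it preserves $\mu$ because its symbol equals $1$ at $k=0$, which gives (i). The spectrum of $L_0$ is $\{\abs{k}^2:k\in\ZZ^d\}\subset\{0\}\cup[1,\infty)$, and $\ker L_0$ is the constants, which gives (ii). For (iii) with $K=0$, I will verify the gradient bound $\Gamma(P_rf)\le P_r\Gamma(f)$. Since $\nabla$ commutes with $P_r$ (both are Fourier multipliers), we have $\nabla P_rf=P_r\nabla f$ componentwise. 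Jensen's inequality for the Markov kernel of $P_r$, applied to the convex map $v\mapsto\abs{v}^2$, then gives $\abs{P_r\nabla f}^2\le P_r\abs{\nabla f}^2$. Equivalently, Bochner's formula on the flat torus gives $\Gamma_2(f)=\abs{\nabla^2f}^2\ge0$. By \cite{AGS15} either form yields $\BE(0,\infty)$.

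\textbf{Identifying the class.} If $b\in W^{2,\infty}$, then $b\in D(L_0)$ with $L_0b=-\Delta b$, and $\sqrt{\Gamma(b)}=\abs{\nabla b}$. Hence $\norm{b}_{\mathcal B}=\norm{\nabla b}_\infty+\norm{\Delta b}_\infty$, and the class of the corollary is exactly $\AB(B)$ with $K=0$. Theorem~\ref{thm:dirichlet-main} then applies verbatim. Its parameters depend only on $B$, $a_{\min}$, $a_{\max}$, $\varepsilon$ and $K=0$, so they are independent of $d$.

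\textbf{Containment of the Wiener ball.} Suppose $\norm{b}_{A^2}\le B/2$. The Fourier series of $b$, $\nabla b$ and $\Delta b$ converge absolutely, so $b\in C^2\subset W^{2,\infty}$. Pointwise we have $\abs{\nabla b}\le\sum_\mu\abs{\mu}\abs{\hat b_\mu}\le\norm{b}_{A^1}$ and $\abs{\Delta b}\le\sum_\mu\abs{\mu}^2\abs{\hat b_\mu}\le\norm{b}_{A^2}$. Therefore
\[
\norm{\nabla b}_\infty+\norm{\Delta b}_\infty\le\norm{b}_{A^1}+\norm{b}_{A^2}\le2\norm{b}_{A^2}\le B,
\]
using that $\sigma\mapsto\norm{b}_{A^\sigma}$ is non-decreasing.

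\textbf{Non-containment.} I will exhibit an element of the class with $\norm{b}_{A^2}=\infty$, which lies in no ball of $A^2$. Let $g(x)=\operatorname{sgn}(\sin x_1)$, a mean-zero square wave in the first coordinate. Its coefficients satisfy $\abs{\hat g_{k e_1}}\asymp1/\abs{k}$ for odd $k$, so $\sum_k\abs{\hat g_k}=\infty$. Put $b=c\,Rg$ with $c>0$ small. Then $b$ is a $C^1$ function of $x_1$ that is piecewise quadratic, with $\partial_1^2b=-cg$, so $b\in W^{2,\infty}$. Moreover $\norm{b}_\infty$, $\norm{\nabla b}_\infty$ and $\norm{\Delta b}_\infty$ are all $O(c)$. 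Since $a_{\min}<a_{\max}$ we have $q>0$, so we can choose $c$ with $\norm{b}_\infty\le q$ and $\norm{\nabla b}_\infty+\norm{\Delta b}_\infty\le B$. Then $a=\bar a(1+b)$ satisfies $a_{\min}\le a\le a_{\max}$. Finally $\sum_\mu\abs{\mu}^2\abs{\hat b_\mu}=c\sum_k\abs{\hat g_k}=\infty$, so $b\notin A^2$.

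The main step that needs care is (iii): one must check that the gradient-bound formulation matches the definition of $\BE(0,\infty)$ in \cite[Def.~2.4]{AGS15}. I will appeal to the equivalence stated in \cite[Cor.~2.3]{AGS15}. Alternatively I can verify the $\Gamma_2$-inequality in its weak form directly by Fourier computation on test functions.
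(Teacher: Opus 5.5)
Your verification of Assumption~\ref{ass:M} matches the paper's proof. So do the identification of the class with $\AB(B)$ and the Wiener-ball inclusion via $\norm{b}_{A^1}+\norm{b}_{A^2}\le2\norm{b}_{A^2}$.

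The non-inclusion part is correct but takes a genuinely different route. You exhibit one element of the class that lies outside $A^2$: $b=c\,Rg$ with $g$ the square wave in $x_1$. Then $\Delta b=-cg$ is bounded but discontinuous, and $\sum_\mu\abs{\mu}^2\abs{\hat b_\mu}=c\sum_k\abs{\hat g_k}=\infty$, since $\abs{\hat g_k}=2/(\pi\abs{k})$ for odd $k$. The paper instead builds trigonometric polynomials $b=\operatorname{Re}P/(4\sqrt2R^2)$ from Rudin--Shapiro signs. Bernstein's inequality keeps $\norm{b}_{\mathcal B}\le2$ and $\norm{b}_\infty\le1/(4R^2)$, while the flat Fourier modulus forces $\norm{b}_{A^2}\ge\sqrt N/(8\sqrt2)$.

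Your argument is more elementary and suffices for the statement as written. It uses only that $W^{2,\infty}$ permits a discontinuous $\Delta b$, whereas $b\in A^2$ forces $\Delta b$ to be continuous.

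The paper's construction proves more. Even the smooth (trigonometric-polynomial) members of the class have unbounded $A^2$ norms. So the gap between the two classes is the quantitative $L^\infty$-versus-$\ell^1$ gap, not merely a regularity artefact of $W^{2,\infty}$ versus $C^2$.

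If you want that stronger form within your approach, replace $g$ by its Fejér means $\sigma_ng$. These stay bounded by $1$ because the kernel is positive, and $\nabla R\sigma_ng=\sigma_n(\nabla Rg)$ stays uniformly bounded. Meanwhile $\sum_k\abs{(\sigma_ng)^\wedge_k}\to\infty$ logarithmically, so $b_n=c\,R\sigma_ng$ are trigonometric polynomials in the class with $\norm{b_n}_{A^2}\to\infty$.
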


\subsection{Auxiliary lemmas}
Throughout, Assumption~\ref{ass:M} holds.

\begin{lemma}[Spectral facts]
\label{lem:spectral}
Let $c_0:=\sup_{x>0}(1-\me^{-x^2})/x\le0.6382$.
For $u\in V$, $g\in L^2$ and $r,s>0$:
\begin{gather*}
    \norm{(P_r-I)u}\le c_0\sqrt r\,\norm{u}_1;\qquad
    \norm{(P_s-I)g}_{-1}\le c_0\sqrt s\,\norm{g};\qquad
    \norm{P_ru}_1\le\norm{u}_1;\\
    \norm{P_T\Pi g}_1\le\me^{-T}\norm{g}\ \text{ for }T\ge\tfrac12;
\end{gather*}
and if $r$ satisfies \HR{}, then $\norm{R_mg}_1\le(1+\delta)\norm{g}_{-1}$.
\end{lemma}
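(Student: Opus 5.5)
The plan is to reduce every estimate to a scalar inequality on the spectrum via the spectral theorem for the self-adjoint operator $L_0\ge0$. By Assumption~\ref{ass:M}(ii) we have $\sigma(L_0)\subset\{0\}\cup[1,\infty)$ and $\ker L_0=\RR\one$. First compute $c_0$: with $y=x^2$ one maximises $(1-\me^{-y})/\sqrt y$. Setting the derivative to zero gives $2y\me^{-y}=1-\me^{-y}$, i.e.\ $\me^{y}=1+2y$, whose root is $y\approx1.2564$. The maximum is then $2\sqrt y\,\me^{-y}\approx0.6382$. A rigorous upper bound can be obtained by checking the sign of the derivative on a bracket around the root.

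For $u\in V$, write $\norm{(P_r-I)u}^2=\int(1-\me^{-r\lambda})^2\dint E_u(\lambda)$, where $E_u$ is the spectral measure of $u$. We have $\norm{u}_1^2=\Ecal(u,u)=\int\lambda\dint E_u$. It then suffices that $(1-\me^{-r\lambda})^2\le c_0^2 r\lambda$, which is the definition of $c_0$ with $x=\sqrt{r\lambda}$. For the second bound, note that $P_s-I$ commutes with $L_0^{-1/2}\Pi$, so $\norm{(P_s-I)g}_{-1}^2=\int_{[1,\infty)}\lambda^{-1}(1-\me^{-s\lambda})^2\dint E_g$. The same scalar inequality gives a bound of $c_0^2 s\norm{g}^2$. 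The contraction $\norm{P_ru}_1\le\norm{u}_1$ follows from $\lambda\me^{-2r\lambda}\le\lambda$.

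For $\norm{P_T\Pi g}_1^2=\int_{[1,\infty)}\lambda\me^{-2T\lambda}\dint E_g$, we need $\sup_{\lambda\ge1}\lambda\me^{-2T\lambda}\le\me^{-2T}$. The function $\lambda\me^{-2T\lambda}$ is decreasing for $\lambda\ge1/(2T)$, and $1/(2T)\le1$ exactly because $T\ge\frac12$. So the supremum is attained at $\lambda=1$, and this is the only place the hypothesis on $T$ enters. Finally, $\norm{R_mg}_1^2=\int_{[1,\infty)}\lambda r(\lambda)^2\dint E_g$, using $r(0)=0$ so that the mass at $0$ drops out. Lemma~\ref{lem:HR} gives $\abs{r(\lambda)}\le(1+\delta)\lambda^{-1}$, hence the integral is at most $(1+\delta)^2\int\lambda^{-1}\dint E_g=(1+\delta)^2\norm{g}_{-1}^2$.

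The only step needing care is the numerical value of $c_0$, together with justifying the spectral representation of the form norm, namely $\Ecal(u,u)=\norm{L_0^{1/2}u}^2$ with $V=D(L_0^{1/2})$. This is standard for closed forms and can be invoked directly. Everything else is routine functional calculus.
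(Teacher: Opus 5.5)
Your proposal is correct and follows the paper's proof essentially line by line. Each estimate is reduced by the spectral theorem to a scalar inequality on $\{0\}\cup[1,\infty)$: $c_0$ comes from the root of $\me^{x^2}=1+2x^2$ (your $\me^{y}=1+2y$), the $P_T\Pi$ bound uses monotonicity of $\lambda\me^{-2T\lambda}$ on $[1,\infty)$ for $T\ge\frac12$, the last bound uses $0\le\lambda r(\lambda)\le1+\delta$ (equivalently $\abs{r(\lambda)}\le(1+\delta)\lambda^{-1}$), and the paper uses the identification $\Ecal(u,u)=\norm{L_0^{1/2}u}^2$ that you flag implicitly.
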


\begin{proof}
By the spectral theorem, $\norm{(P_r-I)u}^2=\int[(1-\me^{-r\lambda})\lambda^{-1/2}]^2\lambda\dint\inner{E_\lambda u,u}$ and $(1-\me^{-r\lambda})\lambda^{-1/2}=\sqrt r\,(1-\me^{-x^2})/x$ with $x=\sqrt{r\lambda}$; the supremum is attained where $\me^{x^2}=1+2x^2$, giving $c_0=0.638173\ldots$.
The second bound follows in the same way from $\norm{(P_s-I)g}_{-1}^2=\int_{\lambda>0}\lambda^{-1}(1-\me^{-s\lambda})^2\dint\inner{E_\lambda g,g}$, and the third from $\me^{-r\lambda}\le1$.
For the fourth, $\sup_{\lambda\ge1}\lambda^{1/2}\me^{-T\lambda}=\me^{-T}$ when $T\ge\frac12$.
For the last,
\[
    \norm{R_mg}_1^2=\int_{\lambda\ge1}(\lambda r(\lambda))^2\lambda^{-1}\dint\inner{E_\lambda g,g}\le(1+\delta)^2\norm{g}_{-1}^2,
\]
because $0\le\lambda r(\lambda)\le1+\delta$ on $[1,\infty)$ and $r(0)=0$.
\end{proof}

\begin{lemma}[Density]
\label{lem:density}
$V_\infty$ is dense in $V$ for the norm $\Ecal_1(u)^{1/2}=(\Ecal(u,u)+\norm{u}^2)^{1/2}$.
\end{lemma}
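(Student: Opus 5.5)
The natural approach is truncation followed by regularisation by the semigroup. Given $u\in V$, I would first cut it off, $u_n:=(u\wedge n)\vee(-n)$. Since $\Ecal$ is a Dirichlet form, normal contractions operate on $V$, so $u_n\in V_\infty$ and $\Ecal(u_n)\le\Ecal(u)$. Clearly $u_n\to u$ in $L^2$ by dominated convergence. Thus $(u_n)$ is bounded in the Hilbert space $(V,\Ecal_1)$, and every weak subsequential limit must equal $u$ because of the $L^2$ convergence. Hence $u_n\rightharpoonup u$ weakly in $V$.

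Weak convergence already gives the density. The $\Ecal_1$-closure of the subspace $V_\infty$ is convex and strongly closed, hence weakly closed, so it contains $u$. Explicitly, Mazur's lemma produces convex combinations of the $u_n$, which still lie in the linear space $V_\infty$ by (F2) or simply by linearity, converging strongly to $u$. This avoids any need to prove strong convergence of the truncations themselves.

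One could instead upgrade to strong convergence directly, using $\limsup\Ecal(u_n)\le\Ecal(u)$ together with weak lower semicontinuity, which give $\Ecal(u_n)\to\Ecal(u)$ and so norm convergence in a Hilbert space. This is a cleaner alternative and needs only the contraction property once more. An alternative route via $P_r$ fails: $P_ru$ need not be bounded for $u\in L^2$ without ultracontractivity. Truncation is therefore the right tool.

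The main point to check is that the unit contraction property of $\Ecal$, namely $\Ecal(\phi\circ u)\le\Ecal(u)$ for normal contractions $\phi$, is part of the Dirichlet form framework of \cite{AGS15}. I would cite it from there. Beyond that the argument is routine functional analysis.
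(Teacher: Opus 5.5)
Your proposal is correct and follows the paper's proof. The shared steps are the truncation $u_n=(-n\vee u)\wedge n$, the Markov property giving $\Ecal(u_n)\le\Ecal(u)$, boundedness in $(V,\Ecal_1)$ with weak limit $u$, and then strong convergence via $\Ecal_1(u_n)\to\Ecal_1(u)$ (your ``alternative'' is exactly the paper's final step). Your Mazur route is an equally valid way to conclude, since the $\Ecal_1$-closure of the subspace $V_\infty$ is weakly closed; the semigroup regularisation in your opening sentence is, as you observe yourself, not needed.
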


\begin{proof}
For $u\in V$ let $u_n=(-n\vee u)\wedge n$.
Since truncation is a normal contraction, $u_n\in V$ and $\Ecal(u_n)\le\Ecal(u)$ by the Markov property, and $u_n\to u$ in $L^2$.
Thus $(u_n)$ is bounded in $(V,\Ecal_1)$, every weak limit point equals $u$, and weak lower semicontinuity gives $\Ecal_1(u_n)\to\Ecal_1(u)$; weak convergence and convergence of norms give $u_n\to u$ in $(V,\Ecal_1)$.
\end{proof}

\begin{lemma}[Interpolation identity]
\label{lem:interpolation}
Let $b\in L^\infty\cap D(L_0)$ with $L_0b\in L^\infty$ and $\sqrt{\Gamma(b)}\in L^\infty$, and let $u\in V_\infty$.
Then, in $L^2$,
\[
    P_t(bu)-P_tb\,P_tu=2\int_0^tP_s\Gamma(P_{t-s}b,P_{t-s}u)\dint s .
\]
\end{lemma}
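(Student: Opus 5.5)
The plan is the standard Duhamel argument. Fix $t>0$ and set, for $s\in[0,t]$,
\[
    \Phi(s):=P_s\bigl(P_{t-s}b\,P_{t-s}u\bigr),
\]
so that $\Phi(t)=P_t(bu)$ and $\Phi(0)=P_tb\,P_tu$. The identity then follows from $\Phi(t)-\Phi(0)=\int_0^t\Phi'(s)\dint s$ once we establish
\[
    \Phi'(s)=2P_s\Gamma(P_{t-s}b,P_{t-s}u).
\]
Formally, write $b_s=P_{t-s}b$ and $u_s=P_{t-s}u$, so that $\partial_sb_s=L_0b_s$ and $\partial_su_s=L_0u_s$. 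Then
\[
    \Phi'(s)=P_s\bigl(-L_0(b_su_s)+u_sL_0b_s+b_sL_0u_s\bigr),
\]
and the diffusion (Leibniz) identity
\[
    L_0(fg)=fL_0g+gL_0f-2\Gamma(f,g)
\]
turns the bracket into $2\Gamma(b_s,u_s)$.

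\textbf{Rigorous version in weak form.} To avoid needing $b_su_s\in D(L_0)$, we test against $\varphi\in D(L_0)$ and work with
\[
    \phi(s)=\int b_su_s\,P_s\varphi\dint\mu,\qquad s\in(0,t).
\]
For $s<t$ we have $b_s,u_s\in D(L_0)$. The functions $b_s$ are uniformly bounded since $P_r$ is Markov. To keep $u_s$ bounded we use $u\in V_\infty$, and to keep $P_s\varphi$ bounded we take $\varphi\in L^\infty\cap D(L_0)$, e.g.\ $\varphi=P_\epsilon\psi$ with $\psi\in L^\infty$. These bounds make the product rule for $\phi'$ legitimate, since $s\mapsto b_s,u_s$ are differentiable in $L^2$ for $s<t$ and everything is bounded. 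We then compute
\[
    \phi'(s)=\int\bigl(L_0b_s\,u_s+b_sL_0u_s\bigr)P_s\varphi\dint\mu-\Ecal\bigl(b_su_s,P_s\varphi\bigr).
\]
Next we expand $\Ecal(b_su_s,P_s\varphi)=\int\Gamma(b_su_s,P_s\varphi)\dint\mu$ with the Leibniz rule (F2), valid on $V_\infty$. Each term $\int(L_0b_s)u_sP_s\varphi\dint\mu$ is rewritten as $\Ecal(b_s,u_sP_s\varphi)=\int\bigl[u_s\Gamma(b_s,P_s\varphi)+P_s\varphi\,\Gamma(b_s,u_s)\bigr]\dint\mu$, and symmetrically for the $u_s$-term. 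After cancellation this leaves
\[
    \phi'(s)=2\int\Gamma(b_s,u_s)\,P_s\varphi\dint\mu .
\]
Integrating over $(\epsilon',t-\epsilon')$ and letting $\epsilon'\to0$ uses the strong continuity of $P$. The endpoint $s\to t$ needs $b_su_s\to bu$ in $L^2$; this holds because $b_s\to b$ in $L^2$ boundedly and $u_s\to u$ in $L^2$ boundedly, so the products converge in $L^1$ and, being bounded, in $L^2$. Since such $\varphi$ are dense, this gives the weak form of the identity.

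\textbf{Convergence of the integral in $L^2$.} By Cauchy--Schwarz (F1) and the gradient bound (F3),
\[
    \abs{\Gamma(b_s,u_s)}\le\sqrt{\Gamma(b_s)}\sqrt{\Gamma(u_s)}\le\me^{K(t-s)}\norm{\sqrt{\Gamma(b)}}_\infty\sqrt{\Gamma(u_s)},
\]
and $\norm{\sqrt{\Gamma(u_s)}}_{L^2}=\norm{u_s}_1\le\norm{u}_1$. Hence the integrand is Bochner integrable in $L^2$ with norm bounded uniformly in $s$, and the weak identity upgrades to an identity in $L^2$.

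\textbf{Main obstacle.} The delicate point is justifying the Leibniz and product-rule manipulations with only the regularity $V_\infty$, rather than an algebra of $D(L_0)$-functions. The plan is to keep everything in the form $\Ecal(\cdot,\cdot)$ tested against products of bounded $V$-functions, so that only (F2) is needed. The hypotheses $L_0b\in L^\infty$ and $\sqrt{\Gamma(b)}\in L^\infty$ mainly serve the uniform bounds in $s$ near the endpoints.
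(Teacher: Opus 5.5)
Your proposal is correct and follows essentially the same route as the paper: test against a bounded $\varphi$, differentiate $s\mapsto\int P_{t-s}b\,P_{t-s}u\,P_s\varphi\dint\mu$, rewrite each term through $\Ecal$ and the Leibniz rule (F2) so that everything cancels except $2\int\Gamma(P_{t-s}b,P_{t-s}u)\,P_s\varphi\dint\mu$, and integrate using the uniform bound on $\abs{\Gamma(P_{t-s}b,P_{t-s}u)}$ obtained from (F1) and (F3). The differences are cosmetic. You take test functions in $L^\infty\cap D(L_0)$, whereas the paper takes $\varphi\in L^2\cap L^\infty$ and uses $P_s\varphi\in V_\infty$ for $s>0$. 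You also spell out the $L^2$ bound on the integrand, which the paper uses only to justify $A(t)-A(0)=\int_0^tA'$.
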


\begin{proof}
Let $\varphi\in L^2\cap L^\infty$, $f_s=P_{t-s}b$, $g_s=P_{t-s}u$ and $A(s)=\int f_sg_s\,P_s\varphi\dint\mu$, $s\in[0,t]$.
For $s<t$, $f_s,g_s\in D(L_0)\cap L^\infty$ with $\partial_sf_s=L_0f_s$, $\partial_sg_s=L_0g_s$ in $L^2$, and $L_0f_s=P_{t-s}L_0b$ is bounded; since $f_s,g_s$ are uniformly bounded, $\partial_s(f_sg_s)=(L_0f_s)g_s+f_s(L_0g_s)$ in $L^2$.
With $\psi=P_s\varphi\in V_\infty$ and $\partial_sP_s\varphi=-L_0\psi$,
\[
    A'(s)=\int\bigl[(L_0f_s)g_s+f_s(L_0g_s)\bigr]\psi\dint\mu-\int f_sg_s\,L_0\psi\dint\mu .
\]
By (F1), (F2) and the definition of $L_0$,
\begin{align*}
    \int f_sg_sL_0\psi&=\Ecal(\psi,f_sg_s)=\int\bigl[f_s\Gamma(g_s,\psi)+g_s\Gamma(f_s,\psi)\bigr],\\
    \int(L_0f_s)g_s\psi&=\Ecal(f_s,g_s\psi)=\int\bigl[g_s\Gamma(f_s,\psi)+\psi\Gamma(f_s,g_s)\bigr],
\end{align*}
and similarly for the term with $L_0g_s$.
Everything cancels except
\[
    A'(s)=2\int\psi\,\Gamma(f_s,g_s)=2\int\varphi\,P_s\Gamma(f_s,g_s),
\]
by the symmetry of $P_s$ in the form $\int(P_sh)\varphi\dint\mu=\int h\,P_s\varphi\dint\mu$ for $h\in L^1$, $\varphi\in L^\infty$ \cite[p.~350]{AGS15}, applied to $h=\Gamma(f_s,g_s)\in L^1$.
$A$ is continuous on $[0,t]$, and $\abs{A'(s)}\le2\me^{K(t-s)}\norm{\sqrt{\Gamma(b)}}_\infty\norm{u}_1\norm{\varphi}$ by (F3), so $A(t)-A(0)=\int_0^tA'$.
Since $A(t)-A(0)=\int[P_t(bu)-P_tb\,P_tu]\varphi$ and $\varphi$ ranges over a dense set, the identity follows.
\end{proof}

\begin{lemma}[Integrated gradient bound]
\label{lem:bochner}
For $g\in L^\infty$, $r>0$ and $h\in V$,
\[
    \Bigl\|\Gamma\Bigl(\int_0^rP_\sigma g\dint\sigma,\,h\Bigr)\Bigr\|_{L^2}\le\me^{Kr}\sqrt{2r}\,\norm{g}_\infty\norm{h}_1 .
\]
\end{lemma}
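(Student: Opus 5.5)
We need to bound $\norm{\Gamma(F,h)}_{L^2}$ with $F=\int_0^rP_\sigma g\dint\sigma$. The plan is to use Cauchy--Schwarz for $\Gamma$ from (F1), namely $\abs{\Gamma(F,h)}\le\sqrt{\Gamma(F)}\sqrt{\Gamma(h)}$ a.e. This reduces the claim to the pointwise bound
\[
    \norm{\sqrt{\Gamma(F)}}_\infty\le\me^{Kr}\sqrt{2r}\,\norm{g}_\infty,
\]
since $\norm{\sqrt{\Gamma(h)}}_{L^2}^2=\Ecal(h,h)=\norm{h}_1^2$.

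For the pointwise bound I would first check that $F\in V$. Indeed $F=L_0^{-1}(I-P_r)g$ on mean-zero parts, plus $r$ times the mean, so $F\in D(L_0)\subset V$. Next, $\sqrt{\Gamma(\cdot)}$ is subadditive, since $\Gamma$ is a nonnegative quadratic form a.e.\ by (F1), so the map $f\mapsto\sqrt{\Gamma(f)}$ is a seminorm a.e. Apply this to Riemann sums of the Bochner integral $\int_0^rP_\sigma g\dint\sigma$, with $\varepsilon>0$ fixed and the integral taken over $[\varepsilon,r]$ to avoid the singularity at $0$; these converge in $V$. This gives a.e.
\[
    \sqrt{\Gamma(F)}\le\int_0^r\sqrt{\Gamma(P_\sigma g)}\dint\sigma.
\]
To pass to the limit rigorously I would argue by duality: for bounded $\varphi\ge0$ and a fixed $h'\in V$, one has $\int\varphi\,\Gamma(F,h')=\lim\int\varphi\,\Gamma(\text{Riemann sum},h')$, because $f\mapsto\Gamma(f,h')$ is continuous $V\to L^1$. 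Then bound each term by Cauchy--Schwarz. Alternatively, $\Gamma(f)^{1/2}$ is $1$-Lipschitz from $V$ into $L^2$ in the sense $\norm{\sqrt{\Gamma(f)}-\sqrt{\Gamma(f')}}\le\norm{f-f'}_1$, so an $L^2$ limit of the Riemann-sum inequality works directly.

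Then insert the reverse Poincaré bound \eqref{eq:reverse-poincare}: $\norm{\sqrt{\Gamma(P_\sigma g)}}_\infty\le\me^{K\sigma}(2\sigma)^{-1/2}\norm{g}_\infty$. Integrating,
\[
    \int_0^r\me^{K\sigma}(2\sigma)^{-1/2}\dint\sigma\le\me^{Kr}\int_0^r(2\sigma)^{-1/2}\dint\sigma=\me^{Kr}\sqrt{2r},
\]
which is exactly the constant claimed. The singularity at $\sigma=0$ is integrable, so letting $\varepsilon\to0$ is harmless: $F_\varepsilon\to F$ in $V$, because $\norm{\int_0^\varepsilon P_\sigma g\dint\sigma}_1\le\int_0^\varepsilon\norm{P_\sigma g}_1\dint\sigma$ and $\norm{P_\sigma g}_1\lesssim\sigma^{-1/2}\norm{g}$ by spectral calculus. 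The $1$-Lipschitz property then transfers the bound.

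The main obstacle is this measure-theoretic justification of Minkowski's inequality for $\sqrt{\Gamma}$ under a Bochner integral in $V$. I expect it to go through via the Lipschitz property of $f\mapsto\sqrt{\Gamma(f)}$ from $V$ to $L^2$. That property follows from the a.e.\ triangle inequality $\abs{\sqrt{\Gamma(f)}-\sqrt{\Gamma(f')}}\le\sqrt{\Gamma(f-f')}$, which holds because $\Gamma$ is a nonnegative symmetric bilinear form a.e. by (F1), exactly as in the derivation there of the pointwise Cauchy--Schwarz inequality via rational $\lambda$.
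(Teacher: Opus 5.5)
Your proof is correct, but its main line differs from the paper's. The paper never works with the nonlinear quantity $\sqrt{\Gamma(F)}$. Since $f\mapsto\Gamma(f,h)$ is a bounded \emph{linear} map $V\to L^1$, it commutes with the $V$-valued Bochner integral, so $\Gamma(F,h)=\int_0^r\Gamma(P_\sigma g,h)\dint\sigma$. Each integrand is bounded in $L^2$ by $\me^{K\sigma}(2\sigma)^{-1/2}\norm{g}_\infty\norm{h}_1$ via (F1) and \eqref{eq:reverse-poincare}, and integrability of $\sigma^{-1/2}$ finishes. Your first, duality-based option, taken with $h'=h$, is essentially this argument.

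Your main route instead proves the stronger, $h$-independent estimate $\norm{\sqrt{\Gamma(F)}}_\infty\le\me^{Kr}\sqrt{2r}\,\norm{g}_\infty$. You get it from a Minkowski inequality for the seminorm $\sqrt{\Gamma(\cdot)}$, and then apply Cauchy--Schwarz once. The price is threefold: the Riemann-sum approximation on $[\varepsilon,r]$, the limit $\varepsilon\to0$, and the $L^2$-Lipschitz property $\norm{\sqrt{\Gamma(f)}-\sqrt{\Gamma(f')}}\le\norm{f-f'}_1$. You justify the Lipschitz property correctly from the a.e.\ triangle inequality, and it is what lets the a.e.\ bounds pass to the limit along a.e.-convergent subsequences.

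What you gain is an $L^\infty$ gradient bound for $\int_0^rP_\sigma g\dint\sigma$ itself, which gives $L^p$ control of $\Gamma(F,h)$ for every $p$. In the paper's applications it would read, for instance, $\norm{\sqrt{\Gamma(P_rb-b)}}_\infty\le\me^{Kr}\sqrt{2r}\,\norm{L_0b}_\infty$ in part (ii) of the proof of Theorem~\ref{thm:gamma-consistency}. The paper's linear route is shorter because only the $L^2$ bound against $\norm{h}_1$ is ever needed there.
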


\begin{proof}
The map $\sigma\mapsto P_\sigma g$ is continuous $(0,r]\to V$ with $\norm{P_\sigma g}_1\le(2\me\sigma)^{-1/2}\norm{g}$, so $\int_0^rP_\sigma g\dint\sigma$ is a Bochner integral in $(V,\Ecal_1)$ and coincides with the $L^2$-integral.
Since $f\mapsto\Gamma(f,h)$ is bounded linear $V\to L^1$,
\[
    \Gamma\Bigl(\int_0^rP_\sigma g\dint\sigma,h\Bigr)=\int_0^r\Gamma(P_\sigma g,h)\dint\sigma\quad\text{in }L^1.
\]
By (F1) and \eqref{eq:reverse-poincare}, $\norm{\Gamma(P_\sigma g,h)}_{L^2}\le\me^{K\sigma}(2\sigma)^{-1/2}\norm{g}_\infty\norm{h}_1$, which is integrable on $(0,r]$; hence the integral is also an $L^2$-Bochner integral, and $\int_0^r\me^{K\sigma}(2\sigma)^{-1/2}\dint\sigma\le\me^{Kr}\sqrt{2r}$.
\end{proof}

\begin{lemma}[$\dot H^{-1}$ bound]
\label{lem:Hminus}
Let $b\in L^\infty\cap D(L_0)$ with $\norm{b}_{\mathcal B}<\infty$.
Then $\norm{\Gamma(b,w)}_{-1}\le\norm{b}_{\mathcal B}\norm{w}_{L^2}$ for all $w\in V$.
\end{lemma}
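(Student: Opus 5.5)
We prove the bound in the dual form. Since $\norm{g}_{-1}=\sup\{\abs{\int g\varphi\dint\mu}:\varphi\in\Hd,\ \norm{\varphi}_1\le1\}$, it suffices to show
\[
    \Bigl|\int\Gamma(b,w)\varphi\dint\mu\Bigr|\le\norm{b}_{\mathcal B}\norm{w}\,\norm{\varphi}_1
\]
for $w\in V$ and $\varphi\in\Hd$. Both sides are continuous in $(w,\varphi)$ with respect to $\Ecal_1$. For the left side this follows from $\abs{\Gamma(b,w)}\le\sqrt{\Gamma(b)}\sqrt{\Gamma(w)}$ (F1) together with $\sqrt{\Gamma(b)}\in L^\infty$. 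So by Lemma~\ref{lem:density} we may take $w,\varphi\in V_\infty$. For $\varphi$ this density must be combined with subtracting the mean, which does not change $\Gamma(\cdot)$ and keeps $\varphi$ bounded.

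The main step is an integration by parts that moves the derivative off $w$. By the Leibniz rule (F2), applied to $w,\varphi\in V_\infty$ and $b\in V_\infty$, we have $\Gamma(b,w\varphi)=w\Gamma(b,\varphi)+\varphi\Gamma(b,w)$. Hence
\[
    \int\varphi\,\Gamma(b,w)\dint\mu=\int\Gamma(b,w\varphi)\dint\mu-\int w\,\Gamma(b,\varphi)\dint\mu .
\]
Since $w\varphi\in V$ and $b\in D(L_0)$, the definition of $L_0$ gives
\[
    \int\Gamma(b,w\varphi)\dint\mu=\Ecal(b,w\varphi)=\int(L_0b)\,w\varphi\dint\mu .
\]
Therefore
\[
    \int\varphi\,\Gamma(b,w)\dint\mu=\int w\bigl(\varphi L_0b-\Gamma(b,\varphi)\bigr)\dint\mu .
\]

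Now estimate the two terms. Since $\norm{\varphi}\le\norm{\varphi}_1$ by the spectral gap,
\[
    \Bigl|\int w\varphi L_0b\dint\mu\Bigr|\le\norm{L_0b}_\infty\norm{w}\norm{\varphi}\le\norm{L_0b}_\infty\norm{w}\norm{\varphi}_1 .
\]
By (F1),
\[
    \Bigl|\int w\,\Gamma(b,\varphi)\dint\mu\Bigr|\le\norm{\sqrt{\Gamma(b)}}_\infty\norm{w}\,\norm{\sqrt{\Gamma(\varphi)}}_{L^2}=\norm{\sqrt{\Gamma(b)}}_\infty\norm{w}\norm{\varphi}_1 .
\]
Adding the two estimates gives the constant $\norm{b}_{\mathcal B}$.

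The main obstacle is justifying the Leibniz and integration-by-parts step, i.e.\ ensuring every factor lies in $V_\infty$. The coefficient $b$ is in $V_\infty$ because it is bounded and $D(L_0)\subset V$. The approximation argument must control the left-hand side in $L^1$ pairing, which requires $\Gamma(b,w_n)\to\Gamma(b,w)$ in $L^2$. This holds because
\[
    \norm{\Gamma(b,w_n-w)}_{L^2}\le\norm{\sqrt{\Gamma(b)}}_\infty\norm{w_n-w}_1 .
\]
Likewise, $\varphi_n\to\varphi$ in $V$ implies $L^2$ convergence, so the pairing converges.
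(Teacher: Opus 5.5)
Your proposal is correct and follows the paper's proof essentially step for step. Both arguments use duality, the Leibniz rule (F2) together with $\int\Gamma(b,w\varphi)\dint\mu=\int(L_0b)w\varphi\dint\mu$ to move the derivative off $w$, the same two estimates via (F1) and the Poincar\'e inequality, and extension from $V_\infty$ by Lemma~\ref{lem:density}. The one addition is that you explicitly re-centre the approximants of $\varphi$ so they stay in $\Hd\cap V_\infty$, a detail the paper leaves implicit.
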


\begin{proof}
Let $w,\varphi\in V_\infty$ with $\int\varphi=0$.
By (F2), $w\varphi\in V_\infty$ and $\Gamma(b,w\varphi)=w\Gamma(b,\varphi)+\varphi\Gamma(b,w)$, and $\int\Gamma(b,w\varphi)=\Ecal(b,w\varphi)=\int(L_0b)w\varphi$.
Hence, by (F1) and $\norm{\varphi}\le\norm{\varphi}_1$,
\[
    \Bigl|\int\varphi\,\Gamma(b,w)\Bigr|\le\norm{L_0b}_\infty\norm{w}\norm{\varphi}+\norm{w}\,\norm{\sqrt{\Gamma(b)}}_\infty\norm{\varphi}_1\le\norm{b}_{\mathcal B}\norm{w}\norm{\varphi}_1 .
\]
Both sides are continuous in $\varphi$ and in $w$ for $\Ecal_1$ (as $\abs{\Gamma(b,w)}\le\sqrt{\Gamma(b)}\sqrt{\Gamma(w)}$), so Lemma~\ref{lem:density} extends the bound to $\varphi\in\Hd$ and $w\in V$, and taking the supremum over $\norm{\varphi}_1\le1$ gives the claim.
\end{proof}

\subsection{Proofs}

\begin{proof}[Proof of Theorem~\ref{thm:gamma-consistency}]
Let first $u\in V_\infty$, and put $r=t-s$, $\tilde b_r=P_rb$, $\tilde u_r=P_ru$.
By Lemma~\ref{lem:interpolation} and bilinearity,
\[
    (\Gamma_t-\Gamma)(b,u)=\frac1t\int_0^t\Bigl[(P_s-I)\Gamma(\tilde b_r,\tilde u_r)+\Gamma(\tilde b_r-b,\tilde u_r)+\Gamma(b,\tilde u_r-u)\Bigr]\dint s=:\frac1t\int_0^tX_s\dint s,
\]
an $L^2$-Bochner integral (the integrand is continuous in $s\in(0,t)$ and bounded in $L^2$ by (F1), (F3) and Lemma~\ref{lem:bochner}).
Here $\norm{\cdot}_{-1}$ may be taken inside the integral: for $\varphi\in\Hd$ with $\norm{\varphi}_1\le1$, pairing the identity with $\varphi$ and using the duality formula for $\norm{\cdot}_{-1}$ gives
\[
    \Bigl|\int\varphi\,(\Gamma_t-\Gamma)(b,u)\dint\mu\Bigr|\le\frac1t\int_0^t\Bigl|\int\varphi X_s\dint\mu\Bigr|\dint s\le\frac1t\int_0^t\norm{X_s}_{-1}\dint s,
\]
and the supremum over $\varphi$ gives $\norm{(\Gamma_t-\Gamma)(b,u)}_{-1}\le\frac1t\int_0^t\norm{X_s}_{-1}\dint s$.
Write $\alpha=\norm{\sqrt{\Gamma(b)}}_\infty$ and $\beta=\norm{L_0b}_\infty$.
We estimate the three parts of $X_s$.

\smallskip
\noindent\textbf{(i)}
By Lemma~\ref{lem:spectral}, (F1), $\norm{\tilde u_r}_1\le\norm{u}_1$ and (F3) with $P_r\Gamma(b)\le\alpha^2$:
\[
    \norm{(P_s-I)\Gamma(\tilde b_r,\tilde u_r)}_{-1}\le c_0\sqrt s\,\norm{\sqrt{\Gamma(\tilde b_r)}}_\infty\norm{\tilde u_r}_1\le c_0\sqrt s\,\me^{Kr}\alpha\norm{u}_1.
\]

\smallskip
\noindent\textbf{(ii)}
Since $\tilde b_r-b=-\int_0^rP_\sigma L_0b\dint\sigma$, Lemma~\ref{lem:bochner} gives
\[
    \norm{\Gamma(\tilde b_r-b,\tilde u_r)}_{-1}\le\norm{\Gamma(\tilde b_r-b,\tilde u_r)}\le\me^{Kr}\sqrt{2r}\,\beta\norm{u}_1.
\]

\smallskip
\noindent\textbf{(iii)}
By Lemmas~\ref{lem:Hminus} and~\ref{lem:spectral}, $\norm{\Gamma(b,\tilde u_r-u)}_{-1}\le(\alpha+\beta)c_0\sqrt r\,\norm{u}_1$.

\smallskip
\noindent
Averaging with $\frac1t\int_0^t\sqrt s\dint s=\frac1t\int_0^t\sqrt{t-s}\dint s=\frac23\sqrt t$ and $\me^{Kr}\le\me^{Kt}$,
\[
    \norm{(\Gamma_t-\Gamma)(b,u)}_{-1}\le\sqrt t\,\me^{Kt}\Bigl[\tfrac43c_0\,\alpha+\tfrac23(\sqrt2+c_0)\,\beta\Bigr]\norm{u}_1\le\sqrt t\,\me^{Kt}\bigl[0.851\,\alpha+1.369\,\beta\bigr]\norm{u}_1,
\]
which is at most $1.37\,\me^{Kt}\sqrt t\,\norm{b}_{\mathcal B}\norm{u}_1$.

For $u\in V$, both $\Gamma_t(b,\cdot)$ (with $\norm{\Gamma_t(b,u)}\le\norm{b}_\infty\norm{u}/t$) and $\Gamma(b,\cdot)$ are continuous from $(V,\Ecal_1)$ into $L^2$, and Lemma~\ref{lem:density} concludes.
\end{proof}

\begin{proof}[Proof of Theorem~\ref{thm:dirichlet-representation}]
By (F1), $\abs{\int b\Gamma(u,v)}\le\norm{b}_\infty\norm{u}_1\norm{v}_1$, so $\norm{K_b}\le\norm{b}_\infty$.
For $u,v\in V_\infty$, two applications of (F2) and the definition of $L_0$ give
\[
    \int b\Gamma(u,v)=\Ecal(bu,v)-\int u\Gamma(b,v)=\Ecal(bu,v)-\int(L_0b)uv+\int v\,\Gamma(b,u).
\]
Here $\Ecal(bu,v)=\inner{\Pi(bu),v}_1$ and $\int gv=\inner{Rg,v}_1$ for $v\in\Hd$, which gives the identity for $u,v\in\Hd\cap V_\infty$.
All terms are continuous in $u,v$ for $\Ecal_1$, since $\norm{bu}_1\le\norm{b}_\infty\norm{u}_1+\norm{\sqrt{\Gamma(b)}}_\infty\norm{u}$, and Lemma~\ref{lem:density} extends the identity to $\Hd$.
The bound on $\Gamma(b,u)-uL_0b$ follows from (F1) and $\norm{u}\le\norm{u}_1$.
For $a\in\AB(B)$, $\int a\Gamma(u,v)=\bar a\inner{(I+K_b)u,v}_1$ and $\int fv=\inner{Rf,v}_1$, so $\bar a(I+K_b)G(a,f)=Rf$, and $\norm{K_b}\le\norm{b}_\infty\le q$.
\end{proof}

\begin{proof}[Proof of Theorem~\ref{thm:dirichlet-step}]
Since $P_T1=1$, Theorem~\ref{thm:dirichlet-representation} gives
\begin{multline*}
    \Knet-K_b=-P_T\Pi M_b+(R_m-R)\Gamma(b,\cdot)+R_m(\Gamma_t-\Gamma)(b,\cdot)\\
    -(R_m-R)M_{L_0b}-R_mM_{b_\tau-L_0b}.
\end{multline*}
With $\alpha,\beta$ as above and $u\in\Hd$, the five terms are bounded as follows.

\smallskip
\noindent\textbf{Term (1): projection term,} $P_T\Pi M_b$.
By Lemma~\ref{lem:spectral}, $\norm{P_T\Pi(bu)}_1\le\me^{-T}\norm{b}_\infty\norm{u}_1$.

\smallskip
\noindent\textbf{Term (2): inverse error, $\Gamma$ part,} $(R_m-R)\Gamma(b,\cdot)$.
By Lemma~\ref{lem:HR} and (F1), $\norm{(R_m-R)\Gamma(b,u)}_1\le(\delta+2\Lambda^{-1/2})\alpha\norm{u}_1$.

\smallskip
\noindent\textbf{Term (3): product-defect term,} $R_m(\Gamma_t-\Gamma)(b,\cdot)$.
By Lemma~\ref{lem:spectral} and Theorem~\ref{thm:gamma-consistency},
\[
    \norm{R_m(\Gamma_t-\Gamma)(b,u)}_1\le(1+\delta)\,1.37\,\me^{Kt}\sqrt t\,(\alpha+\beta)\norm{u}_1.
\]

\smallskip
\noindent\textbf{Term (4): inverse error, $L_0b$ part,} $(R_m-R)M_{L_0b}$.
Here $\norm{(R_m-R)(uL_0b)}_1\le(\delta+2\Lambda^{-1/2})\beta\norm{u}_1$.

\smallskip
\noindent\textbf{Term (5): channel $b_\tau$,} $R_mM_{b_\tau-L_0b}$.
Write $b_\tau-L_0b=\frac1\tau\int_0^\tau L_0w_\sigma\dint\sigma$ with $w_\sigma=(P_\sigma-I)b=-\int_0^\sigma P_\rho L_0b\dint\rho$.
For $u,\varphi\in\Hd\cap V_\infty$, (F1), (F2) and the definition of $L_0$ give
\[
    \int(L_0w_\sigma)u\varphi=\int u\,\Gamma(w_\sigma,\varphi)+\int\varphi\,\Gamma(w_\sigma,u),
\]
and Lemma~\ref{lem:bochner} bounds each term by $\me^{K\sigma}\sqrt{2\sigma}\,\beta\norm{u}_1\norm{\varphi}_1$.
Hence
\[
    \norm{uL_0w_\sigma}_{-1}\le2\sqrt2\,\me^{K\sigma}\sqrt\sigma\,\beta\norm{u}_1,
\]
which extends to $u\in\Hd$ by Lemma~\ref{lem:density}.
Since $\sigma\mapsto L_0w_\sigma=(P_\sigma-I)L_0b$ is continuous into $L^2$ and bounded by $2\beta$ in $L^\infty$, $u(b_\tau-L_0b)=\frac1\tau\int_0^\tau uL_0w_\sigma\dint\sigma$ is an $L^2$-Bochner integral for $u\in\Hd\cap V_\infty$, and the duality argument of the proof of Theorem~\ref{thm:gamma-consistency} gives
\begin{align*}
    \norm{u(b_\tau-L_0b)}_{-1}&\le\frac1\tau\int_0^\tau\norm{uL_0w_\sigma}_{-1}\dint\sigma\le\frac1\tau\int_0^\tau2\sqrt2\,\me^{K\sigma}\sqrt\sigma\dint\sigma\;\beta\norm{u}_1\\
    &\le\tfrac{4\sqrt2}3\,\me^{K\tau}\sqrt\tau\,\beta\norm{u}_1\le1.89\,\me^{K\tau}\sqrt\tau\,\beta\norm{u}_1 ,
\end{align*}
first for $u\in\Hd\cap V_\infty$ and then, by Lemma~\ref{lem:density} and the continuity of $u\mapsto u(b_\tau-L_0b)$ from $L^2$ to $L^2$, for all $u\in\Hd$. With Lemma~\ref{lem:spectral}, term (5) is at most
\[
    (1+\delta)\,1.89\,\me^{K\tau}\sqrt\tau\,\beta\norm{u}_1.
\]

\smallskip
\noindent\textbf{Sum.}
Terms (2) and (4) combine to $(\delta+2\Lambda^{-1/2})\norm{b}_{\mathcal B}$.
The output has mean zero because $I-P_T$ and $R_m$ annihilate constants, and $\Knet$ is bounded on $\Hd$ since $\norm{bu}_1\le\norm{b}_\infty\norm{u}_1+\alpha\norm{u}$, $\norm{\Gamma_t(b,\cdot)}_{L^2\to L^2}\le\norm{b}_\infty/t$ and $\norm{b_\tau}_\infty\le\beta$.
\end{proof}

\begin{proof}[Proof of Theorem~\ref{thm:dirichlet-main}]
For $a\in\AB(B)$ we have $\norm{b}_\infty\le q<1$, $\norm{b}_{\mathcal B}\le B$ and $\norm{L_0b}_\infty\le B$; moreover $\eta_0\le(1-q)/2<\frac12$.
Since $t\le t_0$, $\me^{Kt}\sqrt t\le\me^{Kt_0}\sqrt{t_0}\,\me^{-Kt_0}=\sqrt{t_0}=\eta_0/(5B)$, and likewise $\me^{K\tau}\sqrt\tau\le\eta_0/(7B)$.
The hypotheses of Theorem~\ref{thm:dirichlet-step} hold: $\delta_0<0.05$, $\Lambda\ge1600$ and $T\ge\log40>\frac12$; moreover $1+\delta\le1.05\le1.1$.

\smallskip
\noindent\textbf{(a)}
By Lemma~\ref{lem:sinc}, $\delta\le\delta_0$, and $2\Lambda^{-1/2}=\eta_0/(10B)$.
The five terms are at most
\begin{gather*}
    \me^{-T}q\le\frac{\delta_0}5\le0.02\,\eta_0;\qquad
    (\delta_0+2\Lambda^{-1/2})B=0.2\,\eta_0;\\
    1.1\cdot1.37\cdot\frac{\eta_0}5\le0.3014\,\eta_0;\qquad
    1.1\cdot1.89\cdot\frac{\eta_0}7=0.297\,\eta_0;
\end{gather*}
the total is $0.8184\,\eta_0\le0.82\,\eta_0$.

\smallskip
\noindent\textbf{(b)}
By Lemma~\ref{lem:resolvent}, $\delta\le\me^{-T}+1/(1+s)=\delta_0/2+\delta_0/(2+\delta_0)\le\delta_0$ and $\Lambda=\infty$.
The terms are at most
\[
    \delta_0/2\le0.05\,\eta_0;\qquad
    \delta_0B=0.1\,\eta_0;
\]
and the same last two bounds, with total $0.7484\,\eta_0\le0.75\,\eta_0$.

\smallskip
The proof of Theorem~\ref{thm:main} in Section~\ref{subsec:proof-main} now applies verbatim, with Theorem~\ref{thm:dirichlet-representation} in place of Lemma~\ref{lem:reduction} and $\norm{Rf}_1\le\norm{f}$ by the spectral gap: $\norm{\Knet}\le q':=q+\eta_0$, and the third term is bounded by
\[
    (\delta+2\Lambda^{-1/2})/(1-q')\le\frac{\eta_0}{5B}\cdot\frac2{1-q}\le\frac{\varepsilon\bar a(1-q)}{20}\le\frac{\varepsilon\bar a}4.
\]
The counts are those of the same scheme.
\end{proof}

\begin{proof}[Proof of Corollary~\ref{cor:torus-C2}]
\emph{The torus satisfies Assumption~\ref{ass:M}.}
On $\TT^d$, $\Ecal(u)=\int\abs{\nabla u}^2$ is a symmetric strongly local Dirichlet form with $\Gamma(f,g)=\nabla f\cdot\nabla g$, the normalised Lebesgue measure is invariant, and $\lambda_1=1$.
Since $\nabla P_rf=P_r\nabla f$ and $P_r$ is a positive Markov operator, $\abs{\nabla P_rf}^2\le P_r\abs{\nabla f}^2$, which is the gradient bound of \cite[Cor.~2.3(vi)]{AGS15} with $K=0$, i.e.\ $\BE(0,\infty)$.
Moreover $W^{2,\infty}(\TT^d)\subset D(\Delta)$ and $\norm{b}_{\mathcal B}=\norm{\nabla b}_\infty+\norm{\Delta b}_\infty$, so Theorem~\ref{thm:dirichlet-main} applies, and its parameters do not involve $d$.

\smallskip
\emph{Inclusion.}
Since $\norm{\nabla b}_\infty\le\norm{b}_{A^1}$ and $\norm{\Delta b}_\infty\le\norm{b}_{A^2}$, the class contains the coefficients with $\norm{b}_{A^2}\le B/2$.

\smallskip
\emph{Non-inclusion.}
It suffices to take $d=1$ (and functions of $x_1$ in general).
Let $N\le R$, with $N$ a power of $2$, and let $(\epsilon_j)_{j<N}$ be the Rudin--Shapiro signs \cite{Sha51,Rud59}, so that $\abs{\sum_{j<N}\epsilon_j\me^{ijx}}\le\sqrt{2N}$; put $P(x)=N^{-1/2}\me^{iRx}\sum_{j<N}\epsilon_j\me^{ijx}$ and $b=\operatorname{Re}P/(4\sqrt2R^2)$, so that $\norm{b}_\infty\le1/(4R^2)$.
Bernstein's inequality $\norm{T'}_\infty\le n\norm{T}_\infty$ for trigonometric polynomials $T$ of degree at most $n$ \cite{Zyg02}, applied with $n=2R$, gives $\norm{b'}_\infty\le1/(2R)$ and $\norm{b''}_\infty\le1$, so $\norm{b}_{\mathcal B}\le2$.
On the other hand $\hat b_{\pm(R+j)}=\epsilon_j/(8\sqrt2\,N^{1/2}R^2)$ for $0\le j<N$, so counting only the frequencies $\mu=R+j$, $0\le j<N$, gives
\[
    \norm{b}_{A^2}\ge\sum_{j<N}(R+j)^2\abs{\hat b_{R+j}}\ge\frac{N\cdot R^2}{8\sqrt2\,N^{1/2}R^2}=\frac{\sqrt N}{8\sqrt2}
\]
(the frequencies $-(R+j)$ double this), which is unbounded as $N\to\infty$; for $a_{\min}<a_{\max}$, multiplying $b$ by $B/2$ and taking $R$ large (so that $\norm{b}_\infty\le B/(8R^2)\le q$) gives coefficients in the class with unbounded $A^2$ norm.
\end{proof}

\section{Concluding remarks}
\label{sec:conclusion}

The Darcy solution operator can be emulated, with explicit operator-norm control in $\Hd$, by a scheme built only from the functional calculus of one fixed reference operator and pointwise products.
The construction is not Fourier-specific: its central ingredient, the synthesis of the coupling $\nabla a\cdot\nabla u$ from the product defect of a heat layer, is a statement of carr\'e du champ calculus, and the whole scheme transfers to strongly local Dirichlet spaces with a spectral gap and a Bakry--\'Emery curvature lower bound (Theorem~A), with constants that on $\TT^d$ improve on the Fourier analysis for $\theta=1$.
Strong locality is used through the Leibniz rule for $\Gamma$; graph Laplacians, for which a carr\'e du champ exists but no Leibniz rule holds, are therefore not covered, and whether an analogous construction exists for them is open.
Architectures built on the spectral decomposition of a fixed Laplacian have recently been proposed for reaction--diffusion dynamics \cite{WH25}, with approximation theory for a reaction--diffusion system in \cite{FOW26}; they expand in eigenfunctions where the present scheme applies the heat semigroup.
The results are approximation theorems for continuum operators with prescribed parameters; they are not statements about learned models or about efficient solvers.

Several directions remain open: a discretised version of Theorems~\ref{thm:main} and~\ref{thm:dirichlet-main} with resolution $K\asymp1/\varepsilon$, which on a coarse grid would turn the scheme into a de-aliased Fourier--Galerkin Richardson iteration; lower bounds on the resolution needed by any scheme built from the same primitives; the behaviour of the construction under perturbations of the reference generator, which is relevant for geometry-dependent operator learning; trainable variants of the class, for instance with learned diffusion times and pointwise maps; and Chebyshev acceleration: since $I+K_b$ is self-adjoint on $\Hd$ with spectrum in $[1-q,1+q]$, the exact Chebyshev semi-iteration needs $O(\sqrt\kappa\log(1/\varepsilon))$ steps with block-dependent scalar weights and stays inside the class, but the version with the inexact step $\Knet$ has not been analysed.

\appendix

\section{Proof of Lemma~\ref{lem:sinc}}
\label{app:sinc}

For $h>0$ and integers $M,\Nsinc\ge0$ write $s_h(\lambda):=h\sum_{j=-M}^{\Nsinc}\me^{jh}\me^{-\lambda\me^{jh}}$, so that $r(\lambda)=s_h(\lambda)(1-\me^{-T\lambda})$.

\emph{Step 1 (Poisson bound).}
Let $\rho(h):=13\,h^{-1/2}\me^{-\pi^2/h}$.
We claim that for $0<h\le1$ and all $\lambda>0$,
\[
    \Bigl|\lambda h\sum_{j\in\ZZ}\me^{jh}\me^{-\lambda\me^{jh}}-1\Bigr|\le\rho(h).
\]
Put $\phi(s)=\exp(s-\me^s)$; then $\phi>0$, $\int_\RR\phi=\int_0^\infty\me^{-x}\dint x=1$ (with $x=\me^s$), and $\lambda\me^s\me^{-\lambda\me^s}=\phi(s+\log\lambda)$.
Hence $\lambda h\sum_j\me^{jh}\me^{-\lambda\me^{jh}}=h\sum_jg(jh)$ with $g(s)=\phi(s+\sigma)$, $\sigma=\log\lambda$.
The function $g$ is smooth, $g(s)=O(\me^{-\abs{s}})$ as $s\to-\infty$ and $O(\me^{-\me^s/2})$ as $s\to+\infty$, and $\hat g(\omega)=\me^{i\omega\sigma}\hat\phi(\omega)$ with
\[
    \hat\phi(\omega)=\int_0^\infty\me^{-x}x^{-i\omega}\dint x=\Gamma(1-i\omega),
    \qquad\text{so that}\qquad
    \abs{\hat g(\omega)}=\Bigl(\frac{\pi\omega}{\sinh\pi\omega}\Bigr)^{1/2}.
\]
Poisson summation gives $h\sum_jg(jh)=1+\sum_{n\ne0}\hat g(2\pi n/h)$.
With $x_n=2\pi^2n/h\ge2\pi^2$ one has $\sinh x_n\ge0.4999\,\me^{x_n}$, so
\[
    \abs{\hat g(2\pi n/h)}\le(4.0008\pi^2n/h)^{1/2}\me^{-\pi^2n/h}.
\]
Summing over $n\ne0$ with $\varrho=\me^{-\pi^2/h}\le\me^{-\pi^2}<6\cdot10^{-5}$ and $\sum_{n\ge1}\sqrt n\varrho^n\le1.0002\varrho$ gives the bound $(4.0012\pi/\sqrt h)\me^{-\pi^2/h}\le\rho(h)$.

\smallskip
\emph{Step 2 (truncation).}
For $0<h\le1$ and $\lambda\ge1$ we have $0\le\lambda r(\lambda)\le1+\rho(h)$, and for $1\le\lambda\le\Lambda$
\[
    \lambda r(\lambda)\ge1-\rho(h)-2\Lambda\me^{-(M+1)h}-\bigl(hX+1\bigr)\me^{-X}-(1+\rho(h))\me^{-T},\qquad X:=\me^{(\Nsinc+1)h}.
\]
All terms of the full sum are positive, so dropping the tails $j>\Nsinc$ and $j<-M$ only decreases it; together with $0\le1-\me^{-T\lambda}\le1$ this gives the upper bound.

\emph{Lower tail.}
We have
\[
    h\sum_{j<-M}\lambda\me^{jh}\me^{-\lambda\me^{jh}}\le h\lambda\sum_{j\ge M+1}\me^{-jh}=\lambda\frac{h}{1-\me^{-h}}\me^{-(M+1)h}\le2\lambda\me^{-(M+1)h},
\]
since $h/(1-\me^{-h})<1.6$ for $h\le1$.

\emph{Upper tail.}
Put $x_*=\lambda\me^{(\Nsinc+1)h}\ge X\ge1$; the terms are $h\,u_j\me^{-u_j}$ with $u_j=x_*\me^{jh}$, $j\ge0$, and $G(s):=x_*\me^{sh}\me^{-x_*\me^{sh}}$ is decreasing on $[0,\infty)$, so
\[
    h\sum_{j\ge1}G(j)\le h\int_0^\infty G=\me^{-x_*}.
\]
Hence the upper tail is at most $hx_*\me^{-x_*}+\me^{-x_*}\le(hX+1)\me^{-X}$, because $x\mapsto(hx+1)\me^{-x}$ is decreasing for $x\ge1$.

\noindent Finally $\lambda r=\lambda s_h-\lambda s_h\me^{-T\lambda}\ge\lambda s_h-(1+\rho)\me^{-T}$ for $\lambda\ge1$.

\smallskip
\emph{Step 3 (parameter choice).}
With $h=\pi^2/(\pi^2+2\log(1/\delta_0))\le1$ we have $\me^{-\pi^2/h}=\me^{-\pi^2}\delta_0^2$, and
\[
    \rho(h)\le13\,\bigl(9.87+2\log\tfrac1{\delta_0}\bigr)^{1/2}\cdot1.65\cdot10^{-5}\,\delta_0^2\le\frac{\delta_0}4,
\]
because $\delta_0(9.87+2\log\frac1{\delta_0})^{1/2}\le3.2$ on $(0,1]$.
The choice of $M$ gives $(M+1)h\ge\log(8\Lambda/\delta_0)$, hence $2\Lambda\me^{-(M+1)h}\le\delta_0/4$.
The choice of $\Nsinc$ gives $X\ge2L'$ with $L'=\log\frac8{h\delta_0}$; then
\[
    (hX+1)\me^{-X}\le(2hL'+1)(h\delta_0/8)^2\le(2L'\delta_0+1)\frac{\delta_0}{64}\le\frac{\delta_0}4,
\]
because $L'\delta_0=\delta_0\log\frac8{\delta_0}+\delta_0\log\frac1h\le\log8+\frac2{\pi^2\me}\le2.2$ on $(0,1]$.
Finally $(1+\rho)\me^{-T}\le1.001\,\delta_0/5\le\delta_0/4$.
Summing, \HR{} holds with $\delta=\delta_0$.
The count follows from $M+1\le\log(8\Lambda/\delta_0)/h+1$, $\Nsinc+1\le\log(2L')/h+1$ and $1/h=1+\frac2{\pi^2}\log\frac1{\delta_0}$. \qed


\end{document}